\title{Intermediate complex structure limit for Calabi-Yau metrics}
\author{Yang Li}
\date{\today}
\newtheorem{thm}{Theorem}[section]
\newtheorem{lem}[thm]{Lemma}
\theoremstyle{definition}
\newtheorem{eg}[thm]{Example}
\newtheorem{cor}[thm]{Corollary}
\newtheorem{rmk}[thm]{Remark}
\newtheorem{prop}[thm]{Proposition}
\newtheorem*{Notation}{Notation}
\newtheorem*{Acknowledgement}{Acknowledgement}
\newcommand{\ie}{\emph{i.e.} }
\newcommand{\cf}{\emph{cf.} }
\newcommand{\R}{\mathbb{R}}
\newcommand{\C}{\mathbb{C}}
\newcommand{\Z}{\mathbb{Z}}
\newcommand{\N}{\mathbb{N}}
\newcommand{\Q}{\mathbb{Q}}
\newcommand{\norm}[1]{\left\lVert#1\right\rVert}
\def\Xint#1{\mathchoice
	{\XXint\displaystyle\textstyle{#1}}%
	{\XXint\textstyle\scriptstyle{#1}}%
	{\XXint\scriptstyle\scriptscriptstyle{#1}}%
	{\XXint\scriptscriptstyle\scriptscriptstyle{#1}}%
	\!\int}
\def\XXint#1#2#3{{\setbox0=\hbox{$#1{#2#3}{\int}$ }
		\vcenter{\hbox{$#2#3$ }}\kern-.6\wd0}}
\def\dashint{\Xint-}
\begin{document}
	\maketitle

\begin{abstract}
We study polarised degenerations of $n$-dimensional Calabi-Yau hypersurfaces $\{ F_0F_1\ldots F_m+tF=0\} $, where the essential skeleton has dimension $1\leq m\leq n-1$. We will describe the limiting behaviour of the Calabi-Yau potential at the $C^0$-level, in terms of an optimal transport problem, and in terms of non-archimedean potential theory.
\end{abstract}

\section{Introduction}

A \emph{polarised degeneration} of Calabi-Yau (CY) manifolds refers to a 1-parameter family of $n$-dimensional projective CY manifolds $X\to \mathbb{D}^*_t$ over a small complex disc, equipped with an ample line bundle $L\to X$. One can then consider the CY metrics $(X_t, \omega_{CY,t}, \Omega_t)$ in the class $c_1(L)$, and ask for the limiting behaviour as $t\to 0$.

This problem depends crucially on an integer invariant $0\leq m\leq n$, defined as the dimension of the essential skeleton $Sk(X)$, which controls the growth rate of the volume integral $\int_{X_t} \Omega_t\wedge \overline{\Omega}_t\sim O(|\log |t||^m)$ (see section \ref{sectionessentialskeleton}).
Some cases have recently received special attention:

\begin{itemize}
\item  In the $m=0$ case, the CY metrics $(X_t, \omega_t)$ are uniformly volume non-collapsing, and converge in the Gromov-Hausdorff sense to a singular CY metric on a CY variety with klt singularity \cite{DonaldsonSun}\cite{EGZ}\cite{RongZhang}. 

\item The opposite extreme $m=n$ is known as the \emph{large complex structure limit}, which is the context for the SYZ conjecture \cite{SYZ}. Recent progress \cite{LiNA} has indicated an intimate link with non-archimedean geometry.

\item The case $m=1$ (known as the `\emph{small complex structure limit}') is studied by Sun-Zhang \cite{SunZhang}, whose gluing construction covers many families of CY hypersurfaces inside Fano manifolds, degenerating into the union of two transverse smooth divisors. The special case for K3 surfaces was previously obtained in \cite{HSVZ}.

\end{itemize}

We focus on the case $1\leq m\leq n-1$, which we call the `\emph{intermediate complex structure limit}'. Let $M$ be an $(n+1)$-dimensional smooth Fano manifold, and we write its anticanonical bundle as $-K_M=(\sum_0^m d_i)L$
for some ample line bundle $L\to M$, and natural numbers $d_i>0$. Let $F_0, F_1,\ldots, F_m$ (resp. $F$) be sufficiently generic sections of $H^0(M, L^{\otimes d_i})$ (resp. $H^0(M, -K_M)$, so that all the divisors $E_i=(F_i=0)$ and $(F=0)$ are smooth, and all intersections are transverse. We consider the family of CY hypersurfaces $X\to \mathbb{D}_t^*$:
\begin{equation}\label{Fanohypersurface}
X_t= \{  F_0F_1\ldots F_m +tF=0   \} \subset M.
\end{equation}
Under the assumption $1\leq m\leq n-1$, the intersection $\cap_0^m E_i$ is connected using the Lefschetz hyperplane theorem, so the essential skeleton $Sk(X)$ consists of only one simplex of dimension $m$.

Our goal is to give two $C^0$ pluripotential theoretic limiting descriptions for the Hermitian metrics $(L, h_{CY,t})$ whose Chern curvature induce the CY metrics $(X_t, \omega_{CY,t})$ in the class $c_1(L)$. 

\begin{itemize}
    \item  The limit can be encoded by the solution of a real Monge-Amp\`ere type equation with $m$ independent variables, which arises from the unique solution of an optimal transport problem. This perspective has a more PDE theoretic flavour, which in the $m=1$ special case reduces to the limiting potential found by Sun-Zhang \cite{SunZhang}.

    \item The limit can be interpreted in terms of non-archimedean (NA) geometry on the Berkovich space $X_K^{an}$ associated to the degeneration family. NA pluripotential theory \cite{Boucksom} provides a unique up to scale semipositive metric $\norm{\cdot}_{CY,0}$ on the line bundle $L\to X_K^{an}$, whose NA Monge-Amp\`ere measure equals the Lebesgue measure supported on the essential skeleton $Sk(X)$ with total integral $(L^n)$. There is a hybrid topology on $X_K^{an}\cup \bigcup_{t\neq 0} X_t$ unifying the Berkovich space with the CY manifolds. Our main theorem is a continuity principle on the hybrid topology:

    \begin{thm}\label{mainthm}
    Let $1\leq m\leq n-1$. After suitable normalisation, the norm function $|\cdot |_{h_{CY,t}}^{\frac{1}{|\log |t||}}$ on $L\to X_t$ converges in the $C^0$-hybrid topology to $\norm{\cdot}_{CY,0}$ on $L\to X_K^{an}$ as $t\to 0$.

\end{thm}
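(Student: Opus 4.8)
The plan is to reduce Theorem \ref{mainthm} to a family of complex Monge-Amp\`ere equations, establish estimates uniform across the degeneration, extract a subsequential hybrid limit, and identify that limit through the uniqueness of the non-archimedean Calabi-Yau metric. Fix an snc (or dlt) model $\mathcal{X}\to\mathbb{D}$ of the family $X\to\mathbb{D}^*$ together with a relatively semiample line bundle extending $L$ and a smooth reference metric $h_{ref}$ on it, inducing reference forms $\omega_{ref,t}\in c_1(L)$. Writing $h_{CY,t}=h_{ref,t}e^{-\phi_t}$, the potential $\phi_t$ solves
$$(\omega_{ref,t}+dd^c\phi_t)^n=\frac{(L^n)}{\int_{X_t}a_t\,\Omega_t\wedge\overline{\Omega}_t}\,a_t\,\Omega_t\wedge\overline{\Omega}_t,$$
where $a_t$ is a fixed smooth positive density coming from $h_{ref}$, and by section \ref{sectionessentialskeleton} the normalising constant is comparable to $|\log|t||^{-m}$. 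After fixing the additive normalisation (say $\sup_{X_t}\phi_t=0$) and rescaling, $\psi_t:=\phi_t/|\log|t||$, Theorem \ref{mainthm} asserts precisely that $\psi_t$, viewed as a metric on $L$ over $X_t\subset X_K^{an}\cup\bigcup_{t\neq 0}X_t$, converges in the $C^0$-hybrid topology to a fixed representative of $\norm{\cdot}_{CY,0}$.

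\textbf{A priori estimates and compactness.} The main analytic input is a bound $\norm{\psi_t}_{L^\infty(X_t)}\leq C$ uniform in $t$. The upper bound comes from a uniform Ko\l odziej / Eyssidieux-Guedj-Zeriahi $L^\infty$ estimate: after rescaling, the right-hand side is dominated by $C|\log|t||^{-m}a_t\,\Omega_t\wedge\overline{\Omega}_t$, whose logarithmic pole profile near each stratum $\bigcap_{i\in I}E_i$ has uniformly controlled Orlicz-type mass. The lower bound is produced by an explicit subsolution barrier assembled from the functions $\log|F_i|^2$ and $\log|F|^2$, measured in fixed Hermitian metrics and divided by $|\log|t||$, corrected by a fixed $\omega_{ref}$-psh function, so that the comparison principle gives $\psi_t\geq -C$; in addition the fibrewise degeneracy of the equation makes $\psi_t$ essentially convex in the directions transverse to the collapsing fibres, which supplies a uniform modulus of continuity in the skeleton directions. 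With these bounds, comparison with model metrics shows that $\{\psi_t\}$ is precompact in the $C^0$-hybrid topology, so every sequence $t_j\to 0$ has a subsequence along which $|\cdot|_{h_{CY,t_j}}^{1/|\log|t_j||}$ converges to a bounded semipositive metric $\norm{\cdot}_0$ on $L\to X_K^{an}$.

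\textbf{Identification of the limit.} It remains to show $\norm{\cdot}_0=\norm{\cdot}_{CY,0}$ after the chosen normalisation, which then forces convergence of the full family. Two facts are needed. First, mass concentration: $\omega_{CY,t}^n/(L^n)$ converges weakly, in the hybrid topology, to the normalised Lebesgue measure on the $m$-simplex $Sk(X)$. This follows from the local behaviour of $a_t\,\Omega_t\wedge\overline{\Omega}_t$ near the strata of $\bigcup E_i$: on the tube around $\bigcap_{i\in I}E_i$ the rescaled volume accumulates on the corresponding face of $Sk(X)$, with the top simplex absorbing all of the $|\log|t||^m$ growth and lower faces contributing only $o(1)$ after normalisation, so no mass escapes either to lower strata or to the collapsing directions. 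Second, the non-archimedean continuity principle: for bounded psh functions, hybrid convergence $\psi_{t_j}\to\psi_0$ together with weak convergence of the associated complex Monge-Amp\`ere measures yields $\mathrm{MA}^{NA}(\psi_0)=\lim\mathrm{MA}(\psi_{t_j})$, as in \cite{Boucksom} and \cite{LiNA}. Combining, $\mathrm{MA}^{NA}(\psi_0)$ is the Lebesgue measure on $Sk(X)$ of total mass $(L^n)$; by the uniqueness up to scale of semipositive metrics with prescribed non-archimedean Monge-Amp\`ere measure \cite{Boucksom}, $\psi_0$ coincides with $\norm{\cdot}_{CY,0}$ up to normalisation, and independence of the subsequence gives Theorem \ref{mainthm}. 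Restricting $\psi_0$ to $Sk(X)$ simultaneously identifies it with the convex solution of the real Monge-Amp\`ere equation arising from the optimal transport problem, the first (PDE) description, which in the case $m=1$ recovers the potential of Sun-Zhang \cite{SunZhang}.

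\textbf{Main obstacle.} The crux lies in the two `no mass escape' phenomena underlying the uniform $L^\infty$ bound and the concentration of $\Omega_t\wedge\overline{\Omega}_t$ onto the top simplex of $Sk(X)$, rather than onto lower-dimensional faces or onto the $n-m$ collapsing fibre directions. This is genuinely delicate precisely because $1\leq m\leq n-1$ is intermediate: there are $m$ skeleton directions carrying the logarithmic volume growth and $n-m$ collapsing directions, and their interaction has to be controlled uniformly as $t\to 0$ — in contrast to the non-collapsing extreme $m=0$ and the large complex structure extreme $m=n$ treated in \cite{LiNA}. The barrier construction must therefore be designed to see all strata of $\bigcup E_i$ simultaneously, and the concentration statement requires a careful stratified integration of the logarithmic volume form.
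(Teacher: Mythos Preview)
Your compactness--uniqueness strategy has a genuine gap at the step ``comparison with model metrics shows that $\{\psi_t\}$ is precompact in the $C^0$-hybrid topology''. A uniform $L^\infty$ bound on $\psi_t$ does \emph{not} yield $C^0$-hybrid precompactness: you need an equicontinuity-type input, and the only mechanism the paper has for this is the uniform Fubini--Study approximation property (section~\ref{NAMAcxMA}, condition~(\ref{uniformapproximation1})). You have not verified that the Calabi--Yau potentials themselves satisfy uniform FS approximation, and there is no obvious direct route to this without essentially already knowing the limit. The same issue reappears in your ``continuity principle'': Favre's result does not say that hybrid $C^0$-convergence plus weak convergence of the complex MA measures identifies $\mathrm{MA}^{NA}(\psi_0)$; rather, it says that uniform FS approximation implies \emph{both} the hybrid convergence \emph{and} the identification of the limiting NA MA measure. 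So your steps 3 and 6 both rest on an unproved hypothesis.

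The paper avoids this circularity by reversing the logic. Instead of extracting a limit of the CY potentials, it first solves the optimal transport problem (section~\ref{RealMAoptimaltransport}) to produce a convex function $u$, then builds an explicit ansatz potential $\phi_t = |\log|t||\,u(-\tfrac{\log|F_0|}{|\log|t||},\ldots)$ which \emph{manifestly} satisfies uniform FS approximation, so that Favre applies and one identifies $\phi^{NA}=\phi_{CY,0}$ directly (Cor.~\ref{hybridlimit1}, \ref{hybridlimit2}). The remaining work is to show the actual CY potential is $C^0$-close to this ansatz: writing $\omega_{CY,t}=\omega_t+|\log|t||\,dd^c\psi_t$, one proves $\norm{\psi_t}_{C^0}\to 0$ via (i) an $L^1$-stability estimate exploiting that $\omega_t^n$ is close to $(L^n)\mu_t$ in total variation, giving $C^0$-smallness on a generic region, and (ii) a Bergman kernel / Ohsawa--Takegoshi argument with an almost-orthogonality (``no cancellation'') property for the monomial basis $F_0^{l_0}\cdots F_m^{l_m}$ to propagate the bound from the generic region to all of $X_t$. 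This last step is the real substance you are missing: without an explicit ansatz, there is nothing to compare the CY potential against, and the soft compactness argument does not close.
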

    
The inspiration comes from the large complex structure case $m=n$, which is relevant to the SYZ conjecture \cite{LiNA}\cite{PilleSchneider}\cite{Hultgren}\cite{LiFano}\cite{Hultgren2}. Our case $1\leq m\leq n-1$ has some notable new features, ultimately due to the fact that non $\Q$-factorial dlt models do not specify a natural retraction map of the Berkovich space.
\end{itemize}

The organisation is as follows. Section \ref{NAbackground} recalls some basic notions in NA pluripotential theory, notably how the Berkovich space and the complex manifolds are unified through the hybrid topology. Section \ref{NACYFanohypersurface} derives an a priori structure result on the NA analogue of the CY metric on $L\to X_K^{an}$, via a NA version of Bergman kernel estimates and some algebro-geometric inputs. 
 Section  \ref{RealMAoptimaltransport} takes this structure result as a guide to formulate an optimal transport problem, whose unique minimizer produces a convex function solving a real MA type equation, with a curious wall-chamber structure corresponding to a simplicial decomposition of the gradient image. Section \ref{Ansatzmetrics} uses this optimal transport solution to construct positive ansatz metrics in both the archimedean and NA settings. The NA ansatz turns out to be equal to the NA CY metric, while the archimedean ansatz will be approximately CY in a rather weak sense, and the two ansatz constructions fit together  continuously in the hybrid topology. With more inputs from complex pluripotential theory, section \ref{ConvergenceCYpotential} proves that  after normalising by the factor $\frac{1}{|\log |t||}$, the difference between the archimedean ansatz and the CY potential converges to zero in the $C^0$-norm, which then implies the main theorem. Some further discussions are collected in section \ref{Furtherdiscussions}, notably the relation to Sun-Zhang's work \cite{SunZhang}, and some open questions.

\begin{Notation}
Our convention is $d=\partial+\bar{\partial}, d^c= \frac{\sqrt{-1}}{2\pi} (-\partial+ \bar{\partial})$, so $dd^c=\frac{\sqrt{-1}}{2\pi}\partial \bar{\partial}$. The relation between K\"ahler potentials and K\"ahler metrics is $\omega_\phi=\omega+dd^c\phi$.  Given a Hermitian metric $h$ on a line bundle $L$, its curvature form is $-dd^c\log h^{1/2}$ in the class $c_1(L)$.
\end{Notation}

\begin{Acknowledgement}
The author is a current Clay Research Fellow based at MIT. He thanks S. Sun, R. Zhang, S. Boucksom, J. Hultgren and L. Pille-Schneider for past discussions.
\end{Acknowledgement}

\section{Some background on NA geometry}\label{NAbackground}

\subsection{Models and dual complex}

We recall a few basic notions from non-archimedean geometry \cite[section 5]{Boucksom1}. Denote  $R=\C[\![ t ]\!]$, and $K=\C(\!(t)\!)$. Let $X_K$ be a smooth, connected, projective variety $X_K$ of dimension $n$ over the formal disc $\text{Spec}(K)$. Let $X_K^{an}$ denote the  \emph{Berkovich space}. Given an affine variety over $K$, the Berkovich space is the space of semivaluations on the ring of functions extending the standard discrete valuation on $K$, and in general $X_K^{an}$ is obtained by gluing the affine pieces.	The semivaluations $v$ are equivalently thought of as multiplicative seminorms $|\cdot |=e^{-v}$, satisfying the ultrametric property
\[
|f+g|\leq \max\{ |f|+|g|  \}, \quad |fg|=|f||g|.
\]

 A \emph{model} of $X_K$ is a normal separated scheme $\mathcal{X}$, flat and of finite type over $\text{Spec} (R)  $, together with an identification of the generic fibre of the morphism $\mathcal{X}\to\text{Spec} (R)  $ with $X_K$.  For any model $\mathcal{X}$ and every irreducible component $E$ of $\mathcal{X}_0$, we write the central fibre as $\mathcal{X}_0=\sum b_iE_i$, and $E_J= \cap_{i\in J}E_i$. We say $\mathcal{X}$ is SNC if $\mathcal{X}$ is regular and $\mathcal{X}_0$ has SNC support. In general, every model contains a largest SNC Zariski open subset $\mathcal{X}_{SNC}\subset \mathcal{X}$, and by Hironaka's theorem we can always resolve a given model to achieve an SNC model. A model of the line bundle $L\to X_K$ is a $\Q$-line bundle $\mathcal{L}$ on a proper model $\mathcal{X}$, together with an identification $\mathcal{L}|_{X_K}=L$.

Let $\mathcal{X}$ be a model with log canonical divisor $K_{\mathcal{X}}^{log}$ being $\Q$-Cartier. We say the model $\mathcal{X}$ is \emph{divisorially log terminal} (dlt) if the pair $(\mathcal{X}, \mathcal{X}_{0,red})$ is dlt. This means that $(\mathcal{X}, \mathcal{X}_{0,red})$ is a log canonical pair, and all lc centres are contained in the SNC locus $\mathcal{X}_{SNC}$.

\begin{rmk}
The motivation for dlt models is partly due to the minimal model programme \cite{NicaiseXu}\cite{NicaiseXuYu}, and partly because SNC models typically involve a huge number of divisors, which complicates the combinatorics. The differential geometric intuition for dlt models is that they behave like SNC models, modulo some Zariski closed locus whose contribution to the volume integrals is negligible.
\end{rmk}

The \emph{dual complex} $\Delta_{\mathcal{X}}$ of an SNC model $\mathcal{X}$ is the simplicial complex whose vertices correspond to the irreducible components $E_i$ of the central fibre, and whose faces correspond to the connected components of nonempty intersection strata $E_J$, and are identified as the simplex 
$\Delta_J\simeq \{  x\in \R_{\geq 0}^{|J|}| \sum_{i\in J} b_ix_i=1 \}.
$
The dual complex of a dlt model is defined as the dual complex of $\mathcal{X}_{SNC}$.

Given a dlt model,
there is a natural \emph{embedding map} of the dual complex into $X_K^{an}$, which is a homeomorphism onto its image. Given $x\in \Delta_J$, we need to associate a quasi-monomial valuation $v_x$, regarded as a point in $X_K^{an}$. Given any local function $f\in \mathcal{O}_{\mathcal{X},\eta}$ where $\eta$ is the generic point of $E_J$, we can Taylor expand
\[
f=\sum_{\alpha\in \N^{|J|}} f_\alpha z_1^{\alpha_0}\ldots z_{|J|}^{\alpha_{|J|}},
\] 
where $z_i$ are local equations of the divisors $E_i$, and $f_\alpha$ is either a unit or zero. Then $v_x$ is defined as the minimal weighted vanishing order:
\[
v_x(f)= \min\{   \langle x, \alpha\rangle  |  f_\alpha\neq 0  \}.
\]
In particular, the vertices of $\Delta_J$ map to the divisorial valuations. The requirement $\sum b_ix_i=1$ comes from $v(t)=1$, where $t\in R$ is the uniformizer. Henceforth we identify the dual complex with its image in $X_K^{an}$.

\begin{rmk}
Any SNC model induces a \emph{retraction map} $r_{\mathcal{X}}: X_K^{an}\to \Delta_{\mathcal{X}}$, via computing the valuation on the local defining functions of $E_i$. This retraction map makes sense for $\Q$-factorial dlt models $\mathcal{X}$, but its existence is unclear for general dlt models, since $E_i$ may not be Cartier divisors, hence do not necessarily have local equations. We warn the reader that the retraction map must be treated with extra care, and otherwise dlt models work almost the same way as SNC models. We will not use retraction maps in this paper.
\end{rmk}

\begin{eg}\label{Fanohypersurfaceeg1}
In the example (\ref{Fanohypersurface}), the preferred model over $\text{Spec}(R)$
\[
\mathcal{X}= \{ F_0F_1\ldots F_d +tF=0  \}\subset M_R
\]
is \emph{dlt but not $\Q$-factorial}. This is because in our setup, the local singularity of $\mathcal{X}$ is transversely modelled on $z_1\ldots z_k=ty$, where $z_i$ are the local defining equations for the components of the toric boundary, and $y$ is the local coordinate for  $F$ (\cf \cite[Prop. 4.1]{PilleSchneider}). In particular, the multiplicities $b_i=1$ in this example.

\end{eg}

\subsection{Hybrid topology}

Given a degeneration family $X\to \mathbb{D}^*$ of smooth projective manifolds $X_t$ over some punctured Riemann surface, we can associate a Berkovich space $X_K^{an}$ to the base changed family $X_K$ over the formal disc. There is a natural topology, known as the \emph{hybrid topology}, which describes $X_K^{an}$ as a limiting object of the complex manifolds \cite[Appendix]{Boucksom} \cite[Section 1.2]{PilleSchneider}.

Abstractly, for any scheme $X$ of finite type over a Banach ring $A$ (not necessarily non-archimedean!), Berkovich associates an \emph{analytification} $X^{An}$. When $X=\text{Spec}(B)$ is affine, with $B$ a finitely generated $A$-algebra, then $X^{An}$ is defined as the set of multiplicative seminorms $\norm{\cdot}_x$ on $B$ whose restriction to $A$ is bounded by the given norm on $A$, and the topology on $X^{An}$ is the weak topology. The general case is obtained by gluing affine pieces.
The analytification $X^{An}$ has a natural map to the \emph{Berkovich spectrum} of $A$, which is the set of bounded multiplicative seminorms on $A$.

The hybrid norm on the complex field $\C$ is defined as
\[
\norm{\cdot}_{hyb}= \max\{  |\cdot|_0, |\cdot|_\infty   \},
\]
where $|\cdot|_0$ is the trivial absolute value, and $|\cdot|_\infty$ is the usual absolute value.

Let $A_{e^{-1}}$ be the Banach ring \footnote{One can replace $e^{-1}$ by $0<r<1$ via some scaling of the base $t$-variable.}
\[
A_{e^{-1}}=\{   f=\sum_{\alpha\in \Z} c_\alpha t^\alpha\in \C(\!(  t )\!)| \norm{f}_{hyb}:= \sum \norm{c_\alpha}_{hyb} e^{-\alpha}        \}.
\]
The Berkovich spectrum over $A_{e^{-1}}$ is known as the \emph{hybrid circle}, which is homeomorphically identified with the closed disc $\bar{\mathbb{D}}_{e^{-1}}=\{ t\in \C: |t|\leq e^{-1}  \}$ as follows: given $z\in \bar{\mathbb{D}}_{e^{-1}}$, we associate the seminorm on $A_{e^{-1}}$ by
\[
|f|=\begin{cases}
e^{ - ord_0(f) }, \quad & z=0,
\\
e^{ - \log |f(z)|_\infty /\log |z|_\infty   },\quad & \text{otherwise}.
\end{cases}
\]
Let $X$ now be a scheme of finite type over $A_{e^{-1}}$; the concrete source comes from a family of smooth projective manifolds over the punctured disc. The \emph{hybrid analytification} $X^{hyb}$ is the Berkovich analytification over the hybrid ring $A_{e^{-1}}$, which has a natural map $\pi$ to the Berkovich spectrum $\bar{\mathbb{D}}_{e^{-1}}$. By \cite[Lemma A.6]{Boucksom}, which uses the Gelfand-Mazur theorem, we have canonical identifications $\pi^{-1}(0)=X_K^{an}$, and $\pi^{-1}(\mathbb{D}_{e^{-1}}^*)=X^{hol}$, \ie the central fibre corresponds to the NA Berkovich space, and the other fibres recover the complex manifold fibres. 
Given a family of continuous functions $f_t$ on $X_t$, and a continuous function $f_0$ on $X_K^{an}$, we will say that $f_t$ converges in $C^0$-hybrid topology to $f_0$ as $t\to 0$, if whenever the points $z_t\in X_t$ converge to $z_0\in X_K^{an}$, we have $f_t(z_t)\to f_0(z_0)$.

\subsection{Essential skeleton and volume form asymptote}\label{sectionessentialskeleton}

Suppose $X$ is a degeneration family of smooth projective CY manifolds $X_t$ over some punctured disc, and $X$ is equipped with a holomorphic volume form $\Omega$, which induces the holomorphic volume forms $\Omega_t$ on the $X_t$ via $\Omega= \Omega_t\wedge dt$. The \emph{essential skeleton} is the subset of $X_K^{an}$ where the log discrepancy function takes the minimal value, which we may without loss assume to be zero, up to multiplying $\Omega$ by a suitable power of $t$.

We follow \cite{Boucksom} to sketch the computation of the CY volume form asymptote. Let $\mathcal{X}$ be a dlt model of the degeneration family, whose dual complex is $\Delta_{\mathcal{X}}$. We work on the SNC locus, and write the central fibre as $\mathcal{X}_0=\sum b_iE_i$, and the relative log canonical divisor as $\sum a_i E_i$ (so the log discrepancy function takes the value $\frac{a_i}{b_i}$ at the divisorial point corresponding to $E_i$). Pick a closed point $\xi\in \mathcal{X}_0$, and let $E_i$ be the divisor components of $\mathcal{X}$ passing through $\xi$, for $i\in J=\{ 0,1,\ldots k \}$. We may choose a local coordinate system $z_0,\ldots z_n$ around $\xi\in \mathcal{X}$, such that $z_0,\ldots z_k$ are the local defining equations for $E_0,\ldots E_k$. Locally
\[
\Omega=f_1 z_0^{a_0+b_0-1}\ldots z_k^{a_k+b_k-1} dz_0\wedge \ldots  dz_n,
\]
for some invertible local holomorphic function $f_1$, and the base variable is
\[
t= f_2 z_0^{b_0}\ldots z_k^{b_k} ,
\]
for some invertible local holomorphic function $f_2$. Then
\begin{equation}\label{holovollocal}
\Omega_t= b_0^{-1} f_1f_2^{-1} z_0^{a_0}\ldots z_k^{a_k} d\log z_1\wedge \ldots d\log z_k\wedge dz_{k+1}\wedge \ldots dz_n.
\end{equation}
By the non-negativity of the log discrepancy function, we have $a_i\geq 0$ for $i=0,\ldots k$.

If at least one exponent $a_i>0$, then the local contribution to the volume integral $\int_{X_t} \Omega_t\wedge \overline{\Omega}_t$ is suppressed by a factor $O(\frac{1}{|\log |t||})$. The divisors with $a_i=0$ correspond to the vertices of a subcomplex of $\Delta_{\mathcal{X}}$, which is naturally identified with the essential skeleton $Sk(X)\subset X_K^{an}$ under the embedding map.  The dimension $m=\dim Sk(X)$ then equals the largest value of $k$ such that $a_0=\ldots =a_k=0$, so the total volume integral $\int_{X_t} \Omega_t\wedge \overline{\Omega}_t=O(|\log |t||^m)$. We denote the normalized CY measure as
\[
\mu_t= \frac{  \Omega_t\wedge \overline{\Omega}_t }{   \int_{X_t} \Omega_t\wedge \overline{\Omega}_t      } ,
\]
which is a probability measure on $X_t$.
Boucksom-Jonsson \cite{Boucksom} show that

\begin{thm}\label{Volumeconvergence}
Under the hybrid topology convergence, the measures $\mu_t$ on $X_t$ converge weakly to a probability measure $\mu_0$ supported on $Sk(X)\subset X_K^{an}$, which puts no measure on the lower dimensional faces of $Sk(X)$, and agrees with a suitably normalized Lebesgue measure on the $m$-dimensional faces of $Sk(X)$.
More concretely, for any continuous function $f$ on the hybrid space $X^{An}$, we have
\[
\int_{X_t} fd\mu_t\to \int_{X_K^{an}} fd\mu_0.
\]
\end{thm}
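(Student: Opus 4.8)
The plan is to reduce everything to an explicit local computation on an SNC model and then extract the asymptotics by a Laplace-type argument in tropical coordinates. Using resolution of singularities and further blow-ups, first fix an SNC model $\mathcal{X}$ of the family for which $Sk(X)$ is a subcomplex of $\Delta_{\mathcal{X}}$, so that the divisors $E_i$ with log discrepancy $a_i=0$ are exactly the vertices lying on $Sk(X)$. Cover $\mathcal{X}_0$ by finitely many coordinate charts $U_\alpha$, each adapted to a stratum $E_J$ as in the computation leading to (\ref{holovollocal}): on $U_\alpha$ one has coordinates $z_0,\dots,z_n$ with $z_0,\dots,z_k$ the local equations of the divisors through the chart, $t=f_2 z_0^{b_0}\cdots z_k^{b_k}$, and $\Omega_t$ given by (\ref{holovollocal}). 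For $|t|$ small, $X_t\subset\bigcup_\alpha U_\alpha$, and I would fix a partition of unity subordinate to this cover, compatible with the stratification, so that every estimate can be localised.

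In each chart pass to the coordinates uniformising the hybrid topology: polar coordinates $z_i=r_ie^{\sqrt{-1}\theta_i}$ for $i\le k$, the rescaled radii $\rho_i=\log r_i/\log|t|$, and the transverse complex coordinates $w=(z_{k+1},\dots,z_n)$. Writing $s=-\log|t|\to+\infty$, the relation $t=f_2\prod z_i^{b_i}$ becomes $\sum_{i\in J}b_i\rho_i=1+O(1/s)$, so the point moves into a thickening of the simplex $\Delta_J$, and by \cite{Boucksom} the point $z_t$ with these coordinates converges, as $s\to\infty$, in the hybrid topology to the quasi-monomial valuation $v_\rho\in\Delta_J\subset X_K^{an}$. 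A direct computation from (\ref{holovollocal}) then gives, on $U_\alpha$,
\[
\Omega_t\wedge\overline{\Omega_t}= s^{k}\, e^{-2sA(\rho)}\, g_\alpha(\rho,\theta,w;t)\, d\rho\, d\theta\, dV(w),
\]
where $A(\rho)=\sum_{i\in J}a_i\rho_i\ge 0$ is the restriction to $\Delta_J$ of the log discrepancy function (vanishing exactly on $Sk(X)\cap\Delta_J$), and $g_\alpha$ is bounded above and below on the relevant region uniformly in $t$, with $\int g_\alpha\,d\theta\,dV(w)$ controlled by $|f_1f_2^{-1}|^2$.

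The asymptotics now follow from Laplace's method, face by face. Since $A$ is linear and nonnegative on $\Delta_J$, vanishing on a subface of dimension $\ell\le m$ and positive elsewhere, $\int_{\Delta_J}e^{-2sA}d\rho\asymp s^{-(k-\ell)}$, hence $s^k\int_{\Delta_J}e^{-2sA}d\rho\asymp s^{\ell}$; the leading order $s^m$ is thus produced exactly by neighbourhoods of the top ($m$-dimensional) faces $\sigma$ of $Sk(X)$, on which $A\equiv 0$. Therefore $Z_t:=\int_{X_t}\Omega_t\wedge\overline{\Omega_t}=c\,s^m(1+o(1))$ with $c>0$ a sum of transverse integrals over the top faces, and for continuous $f$ on $X^{An}$ the same computation with $f$ inserted --- using $f_t(z_t)\to f_0(v_\rho)$ together with dominated convergence ($e^{-2sA}$ gives uniform integrability and $f$ is bounded) --- yields
\[
\int_{X_t}f\,\Omega_t\wedge\overline{\Omega_t}= s^m\Big(\sum_{\sigma}\int_{\sigma}f_0\,c_\sigma\,d\mathrm{Leb}_\sigma\Big)(1+o(1)),
\]
the sum over top faces $\sigma$. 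Dividing by $Z_t$ gives $\int f\,d\mu_t\to\int f_0\,d\mu_0$, with $\mu_0$ a convex combination of normalised Lebesgue measures on the top faces and no mass on lower-dimensional faces. Finally one identifies the weights: the transverse integral along a stratum is the mass of the Poincar\'e residue of $\Omega$ on $E_J$, a cohomological quantity, which forces $\mu_0$ to be proportional to a single Lebesgue measure on $Sk(X)$ as claimed. The delicate points I expect are (i) the bookkeeping near the boundary of the charts, i.e. where some $\rho_i\to 0$ (equivalently $|z_i|\to 1$) and the product picture degenerates --- handled by the stratification-compatible partition of unity, these regions being absorbed into overlapping charts adapted to lower-codimension strata --- and (ii) the uniform-in-$t$ control of errors needed to replace $f_t$ by $f_0\circ v_\rho$ and to apply dominated convergence, i.e. showing that the contribution of $\{A(\rho)\ge\varepsilon\}$ and of the non-product remainder is $o(s^m)$ uniformly. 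The hybrid-topology convergence $z_t\to v_\rho$ itself is not an obstacle, being built into the construction of $X^{An}$ recalled above.
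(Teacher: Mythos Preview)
The paper does not prove this theorem: it is quoted as a result of Boucksom--Jonsson, with the citation \cite{Boucksom} immediately preceding the statement, and no proof is given in the text. What the paper does supply is the preparatory local computation~(\ref{holovollocal}) and, after the statement, the identification of the normalisation constants via the Poincar\'e residue~(\ref{Lebesguemeasure}); both of these you have correctly absorbed into your sketch.

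Your outline is essentially the argument of the cited reference: pass to an SNC model, localise in adapted charts, rewrite the volume form in tropical coordinates $\rho_i=\log|z_i|/\log|t|$, and observe that the factor $e^{-2sA(\rho)}$ with $A(\rho)=\sum a_i\rho_i$ forces the mass to concentrate on the zero locus of the log discrepancy function, i.e.\ on the top faces of $Sk(X)$, with Laplace-type asymptotics giving the $s^m$ growth and the Lebesgue density. The identification of the transverse weight with $\int_{E_J}\Omega_{E_J}\wedge\overline{\Omega}_{E_J}$ is exactly what the paper records after the theorem. Your flagged delicate points (i) and (ii) are the genuine technicalities in \cite{Boucksom1}\cite{Boucksom}; in particular the hybrid-topology statement that $z_t\to v_\rho$ pointwise is not quite enough on its own, and the uniformity needed for dominated convergence is handled there by carefully quantifying the chart overlaps and the $O(1/s)$ errors in the constraint $\sum b_i\rho_i=1$. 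Your sketch is correct at the level of a proof plan, but since the paper itself defers to the reference, there is nothing further to compare against.
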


\begin{rmk}
	The reason the volume convergence result works the same way for dlt models as for SNC models, is that the singular locus under further blow up will produce divisors with strictly positive value for the log discrepancy, hence its volume contribution is suppressed.

\end{rmk}

The normalisation constants can be pinned down by a more invariant interpretation of the local computation (\ref{holovollocal}). Let $E_J=\cap_0^m E_i$ correspond to an $m$-dimensional face $\Delta_J\subset Sk(X)$. Complex geometrically, the local structure of $\mathcal{X}$ around (the SNC locus of) $E_J$ is modelled on the total space of the vector bundle $\oplus_{i\in J} \mathcal{O}(E_i) \to E_J$. The formula (\ref{holovollocal}) can be written as
\[
\Omega_t= \Omega_{E_J}\wedge d\log z_1\wedge\ldots d\log z_k,
\]
where $\Omega_{E_J}$ extends holomorphically over $E_J$ (since all $a_i=0$ here), and defines a holomorphic volume form on $E_J$ called the Poincar\'e residue, which is independent of the choice of local coordinates. The integral
\[
\int_{E_J} \Omega_{E_J}\wedge \overline{\Omega}_{E_J}
\]
is finite, since the dlt condition guarantees  the convergence of the integral around the (possibly nonempty) singular locus of $E_J$. Then as a measure on the face $\Delta_J\simeq \{  x\in \R_{\geq 0}^{m+1}|\sum_0^m b_i x_i=1  \}$,
\begin{equation}\label{Lebesguemeasure}
\mu_0= (C_0 \int_{E_J} \Omega_{E_J}\wedge \overline{\Omega}_{E_J}) |dx_1\ldots dx_m|,
\end{equation}
where the constant $C_0$ is independent of the face, and only serves to normalise $\mu_0$ to a probability measure.

\begin{eg}\label{Fanohypersurfaceexample2}
We continue with the Fano hypersurface example (\ref{Fanohypersurface}). Algebro-geometrically, the hypersurfaces $X_t$ degenerates to the union of $\{  F_i=0\}$ as $t\to 0$, and the normalised volume measure $\mu_t$ concentrates near $E_J=\cap_0^{m+1}E_i$ up to $O(\frac{1}{|\log|t||})$ error. By the Lefschetz hyperplane theorem, $E_J$ is connected, so the essential skeleton consists of just one $m$-dimensional simplex.

The neighbourhood of $E_J$ inside the Fano manifold is complex analytically modelled on the total space of $\oplus_0^m L^{\otimes d_i}\to E_J$, and $X_t$ is cut out by $\{ F_0\ldots F_m=-tF \}$ with $F_i\in H^0( L^{\otimes d_i})$. On the part of $X_t$ in the $O(\frac{1}{\log |t||})$-neighbourhood near $E_J\subset M$, the holomorphic volume form is up to $O(\frac{1}{|\log |t||})$-error
\[
\Omega_t\approx \Omega_{E_J}\wedge d\log F_1\wedge \ldots d\log F_m.
\]
The normalisation convention for $\mu_0$ to be a probability measure requires
\begin{equation}\label{normalisationC0}
C_0 \int_{E_J}\Omega_{E_J}\wedge \overline{\Omega}_{E_J}= \frac{1}{\int_{Sk(X)}  dx_1\ldots dx_m }= m!,
\end{equation}
namely
$
d\mu_0= m! |dx_1\ldots dx_m|.
$
Correspondingly, up to $O(\frac{1}{|\log |t||})$ error,\footnote{The notation $\log |F_i|$ involves an implicit choice of local trivialisation of $L$, so that $F_i$ can be identified with a holomorphic function.} 
\begin{equation}\label{normalisedCYmeasure}
d\mu_t\approx C_0\Omega_{E_J}\wedge \overline{\Omega}_{E_J}\wedge \bigwedge_1^m \frac{1}{  |\log |t|| } d\log |F_i|\wedge d^c\log |F_i|.
\end{equation}

\end{eg}

\subsection{Semipositive metrics, NA Calabi conjecture}

A \emph{continuous metric} $\norm{\cdot}$ on the line bundle $L\to X_K^{an}$ associates to local sections $s$ a continuous function $\norm{s}$, compatible with the restriction of sections, and such that $\norm{fs}=|f|\norm{s}$ for local functions $f$. A choice of model line bundle $\mathcal{L}\to \mathcal{X}$ induces a `model metric' on $L$, with the property that the locally invertible sections have norm one. We can then write any other continuous metric in terms of a potential function, via $\norm{\cdot}=\norm{\cdot}_{model}e^{-\phi}$, and it is conventional to identify the metric with the potential.

Let $L$ be a semiample line bundle. Given $k\geq 1$, and a collection $(s_1,\ldots s_N)$ of global sections of $kL$ without common zeros, and real constants $c_1,\ldots c_N$, then we can associate the \emph{non-archimedean Fubini-Study metric} by
\begin{equation}
\norm{s}_{FS,k}(x)= \frac{ |s|(x) } { ( \max_{1\leq i\leq N}{ |s_i|(x) e^{c_i} ) }^{1/k}   }.
\end{equation}
This can be recast in terms of potentials
\begin{equation}\label{NAFubiniStudy}
\phi_{FS,k}= \frac{1}{k} \max_{1\leq i\leq N}(\log |s_i| +c_i),
\end{equation}
where to evaluate $|s_i|$, we implicitly use a local trivializing section of $L$ coming from the choice of a model line bundle.

A continuous \emph{semipositive metric} on $L$ can be then defined as a uniform limit of some sequence of NA Fubini-Study metrics. To such metrics we can associate the NA Monge-Amp\`ere measure $MA(\phi)$, with total integral $(L^n)$. For model line bundles, the NA MA measure is defined in terms of intersection theory, and puts delta masses on the vertices of the dual complex inside $X_K^{an}$, and the general case is uniquely defined such that the MA measures converge weakly under the uniform convergence of metrics.

The central result of NA pluripotential theory is the solution of the NA Calabi conjecture:

\begin{thm}
\cite{Boucksomsurvey}\cite{Boucksom} Let $L\to X$ be an ample line bundle. Given a probability measure $\mu$ supported on some dual complex $\Delta_{\mathcal{X}}\subset X_K^{an}$, then there exists a unique (up to an additive constant) continuous semipositive metric $\phi$ on $L$ such that $MA(\phi)=(L^n)\mu$.
\end{thm}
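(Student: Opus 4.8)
The statement is the non-archimedean Calabi--Yau theorem of Boucksom--Favre--Jonsson, and the natural route is the variational method, parallel to Aubin--Yau in the complex case. The plan is to first solve the equation in the larger space $\mathcal{E}^1(L)$ of finite-energy semipositive metrics on $L\to X_K^{an}$, and only afterwards upgrade to continuity. Fix a reference model metric $\phi_0$ and let $E(\phi)$ denote the Monge--Amp\`ere energy relative to $\phi_0$, normalised by $E(\phi_0)=0$ and characterised along paths by $\tfrac{d}{ds}E(\phi_s)=\tfrac{1}{(L^n)}\int_{X_K^{an}}\dot\phi_s\, MA(\phi_s)$; it is concave, upper semicontinuous in the weak topology of $\mathcal{E}^1$, and controls the relevant compactness. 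One then studies the functional $F_\mu(\phi)=E(\phi)-\int_{X_K^{an}}\phi\,d\mu$ on the normalised slice $\{\phi\in\mathcal{E}^1(L):\ \sup_{X_K^{an}}(\phi-\phi_0)=0\}$.

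For existence of a maximiser: the normalised slice is compact in the weak topology of $\mathcal{E}^1$ (ampleness of $L$ enters here, through a priori energy estimates), $E$ is upper semicontinuous, and $\phi\mapsto\int\phi\,d\mu$ is continuous along such sequences because $\mu$ is supported on the compact polyhedron $\Delta_{\mathcal{X}}$ and charges no pluripolar set; hence $F_\mu$ attains its supremum at some $\phi_\star\in\mathcal{E}^1(L)$. To extract the Euler--Lagrange equation, one perturbs along $\phi_s=P\big((1-s)\phi_\star+s\psi\big)$, where $P$ is the projection onto semipositive metrics, and uses the orthogonality property $\int (Pu-u)\,MA(Pu)=0$ to compute $\left.\tfrac{d}{ds}\right|_{s=0^+}E(\phi_s)=\tfrac{1}{(L^n)}\int(\psi-\phi_\star)\,MA(\phi_\star)$. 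Optimality of $\phi_\star$ then gives $\int(\psi-\phi_\star)\,MA(\phi_\star)\le (L^n)\int(\psi-\phi_\star)\,d\mu$ for every semipositive $\psi$, and since the inequality is stable under $\psi\mapsto 2\phi_\star-\psi$ on a sufficiently rich family, this forces $MA(\phi_\star)=(L^n)\mu$.

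Uniqueness follows from the domination principle for the non-archimedean Monge--Amp\`ere operator: if $\phi_1,\phi_2\in\mathcal{E}^1(L)$ both solve the equation, then expanding $\int(\phi_1-\phi_2)\big(MA(\phi_1)-MA(\phi_2)\big)=0$ into mixed Monge--Amp\`ere measures and invoking the strict concavity of $E$ modulo constants (equivalently, the monotonicity of the mixed pairing, with equality only for $\phi_1-\phi_2$ constant) yields $\phi_1-\phi_2\equiv\mathrm{const}$. Alternatively one runs the usual comparison argument on the locus $\{\phi_1<\phi_2\}$, which has zero $MA(\phi_2)$-mass.

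The step I expect to be the real obstacle is regularity: showing $\phi_\star$ is \emph{continuous}, not merely of finite energy. The leverage is that $\mu$ lives on a fixed dual complex, so after passing to an SNC model dominating $\mathcal{X}$ one can push the problem onto the simplicial complex $\Delta_{\mathcal{X}}$, where semipositive metrics restrict to convex piecewise-affine-type functions and $MA$ degenerates into a toric incarnation of the real Monge--Amp\`ere operator; combining this with the maximum/comparison principle one argues that $\phi_\star$ agrees with the pullback of its restriction to $\Delta_{\mathcal{X}}$ under a retraction, reducing continuity on $X_K^{an}$ to continuity of a convex function on a compact polyhedron. Making this rigorous requires an a priori bound (the non-archimedean analogue of a Ko\l odziej-type $L^\infty$ estimate), approximation of $\mu$ by nicer measures with explicitly controlled solutions, and stability of solutions in energy — an analytic package that substitutes for the missing genuine PDE. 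By contrast, the variational existence, the Euler--Lagrange computation, and uniqueness are essentially formal once the framework ($\mathcal{E}^1$, weak compactness, differentiability of $E$, the orthogonality property) is in place.
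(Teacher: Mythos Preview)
The paper does not supply a proof of this statement: it is quoted as a known result with citations to \cite{Boucksomsurvey} and \cite{Boucksom}, and is used only as background input. So there is nothing in the paper to compare your argument against.

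That said, your sketch is broadly the Boucksom--Favre--Jonsson variational approach from those references, and the overall architecture (energy functional $E$, maximisation of $F_\mu$ on a normalised slice, Euler--Lagrange via envelopes and orthogonality, uniqueness from strict concavity of $E$) is correct in outline. Two points where your write-up would not go through as stated: the step ``the inequality is stable under $\psi\mapsto 2\phi_\star-\psi$'' is not valid, since $2\phi_\star-\psi$ need not be semipositive; in the actual argument the reverse inequality is obtained by a different choice of competitor (adding a bounded function and taking the psh envelope), not by this reflection. And your proposed continuity mechanism --- pulling back from the dual complex via a retraction and reducing to convexity on a polyhedron --- is not how BFJ proceed; continuity is established through non-archimedean capacity estimates and a Ko\l odziej-type stability argument, and in fact the existence of a retraction compatible with the solution is not known a priori (as the present paper itself stresses for dlt models). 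These are genuine gaps in your regularity paragraph, even though you flagged it as the hard part.
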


\begin{eg}\label{Fanohypersurfaceeg2}
The most important case is when $\mu$ is the Lebesgue measure $\mu_0$ on the essential skeleton (\cf (\ref{Lebesguemeasure})). The corresponding solution $\norm{\cdot}_{CY,0}$ is referred to as the \emph{non-archimedean Calabi-Yau (NA CY) metric}. In our application to Fano hypersurfaces (\cf Example \ref{Fanohypersurfaceeg1}), the line bundle $L$ on the Fano manifold $M$ provides a preferred model line bundle over the model $\mathcal{X}$. Suppose $N_0$ is large enough so that $H^0(M, N_0L)$ globally generate the line bundle $L^{\otimes N_0}\to M$, then upon picking a basis $\tau_0,\ldots \tau_{N_1}$ of $H^0(M, N_0L)$, the induced NA Fubini-Study metric agrees with the preferred model metric on $L\to X_K$, which is independent of the choice of $N_0$ or the basis. Using this model metric, the NA CY metric $\norm{\cdot}_{CY,0}$ is identified naturally with a potential $\phi_{CY,0}$.	
\end{eg}

\subsection{NA MA measure vs. complex MA}\label{NAMAcxMA}

The following `hybrid topology continuity principle' for the NA MA measure is essentially contained in the work of Favre \cite{Favre}, and further elucidated by Pille-Schneider \cite{PilleSchneider2}.


We fix some ambient projective embedding of the degeneration family $X$ with ample polarisation $L\to X$, and fix a smooth Hermitian metric $h$ on  $L$ with positive curvature over the ambient space. The choice of $h$ does not matter. Then the Fubini-Study metric (\ref{NAFubiniStudy}) can be viewed as a hybrid topology limit as follows. 
Given the data of the global sections $(s_1,\ldots s_N)$ of $kL$ with no common zeros, and the real constants $c_1,\ldots c_N$, We can then define the  continuous potentials on $(X_t,L)$,
\[
\phi_{k,t}= \frac{1}{k} \max_{1\leq i\leq N} (\log |s_i|_{h^{\otimes k}}+c_i |\log |t||),
\]
corresponding to the Hermitian metrics $h_{k,t}= he^{-2\phi_{k,t}}$ on $(X_t, L)$, which has non-negative curvature current on $X_t$. Then
\begin{itemize}
\item The potential $\frac{1}{|\log |t||}\phi_{k,t}\to \phi_{FS,k}$ as $t\to 0$ in the $C^0$-hybrid topology. Equivalently, the archimedean norm functions $|s|_{h_{k,t}}^{1/|\log |t||}\to \norm{s}_{FS,k}$ in $C^0$-hybrid topology for any local algebraic sections $s$ of $L$. 

\item  The complex MA measure on $X_t$ associated to $h_{k,t}$ converges weakly as $t\to 0$ to the NA MA measure on $X_K^{an}$. (This is based on interpreting the concentration of complex MA measures in terms of intersection numbers, see \cite[Prop. 3.1]{Favre}.)
\end{itemize}

The more general potentials can be handled by uniform Fubini-Study approximation. Let $\phi_t$ be continuous $\omega_h$-psh potentials on $(X_t, L)$ satisfying the uniform approximation property: there exists small $r_k>0$, and a sequence of approximants $\phi_{k,t}$ as above with $k\to +\infty$, such that
\begin{equation}\label{uniformapproximation1}
\sup_{X_t} | \phi_{k,t}-\phi_t| \leq \epsilon_k |\log |t||,\quad \forall 0<|t|\leq r_k.
\end{equation}
and $\epsilon_k\to 0$ as $k\to +\infty$. Then there is a continuous NA psh potential $\phi$ on $(X_K^{an}, L)$ which arises as the $C^0$-limit of $\frac{1}{|\log |t||}\phi_t$ as $t\to 0$. Morever, the complex MA measures on $X_t$ associated to $\phi_t$ converge as the NA MA measure of $\phi$ as $t\to 0$. (This is based on the Chern-Levine inequality, see \cite[section 4]{Favre}.)

\begin{rmk}
Favre \cite[section 4]{Favre} required the small radii $r_k>0$ to be independent of $k$. This is not necessary if we a priori assume $\phi_t$ to have the $\omega_h$-psh property. Notice we do not require $\phi_t$ to have psh dependence on the family parameter $t$.
\end{rmk}

\section{NA CY metric for the Fano hypersurfaces}\label{NACYFanohypersurface}

We focus on the family of CY hypersurfaces $X$ as in (\ref{Fanohypersurface}), base changed to a family $X_K\to \text{Spec}(K)$ inside the Fano manifold $M_K$ over $\text{Spec}(K)$, and study the NA CY metric $\phi_{CY,0}$. The main result of this section is Prop. \ref{NACYpotentialstructure}, which expresses $\phi_{CY,0}$ as a convex function with gradient constraints, with the arguments being $\log |F_0|,\ldots \log |F_m|$. Along the way we will discover a natural decomposition for the gradient image of the convex function, which will eventually be reflected in a wall-chamber structure for the potential theoretic limit of the CY metrics. 

\subsection{Decomposition of sections}\label{Decompositionofsections}

We start with some preparations in algebraic geometry, with the goal of understanding the NA Fubini-Study metrics.

We consider any nonzero section $s\in H^0(X_K, lL)$ for $l\geq 1$. From the exact sequence of sheaves on $M_K$,
\[
0\to (l-\sum d_i)L\to lL\to  \mathcal{O}_{X_K}(lL)\to 0,
\]
and the Kodaira vanishing 
\[
H^1(M_K, (l-\sum d_i)L) \simeq H^1(M_K, lL+ K_M  )=0,
\]
we deduce that the restriction map $H^0(M_K, lL)\to H^0(X_K, lL)$ is surjective, so the section $s$ can be lifted to $H^0(M_K, lL)$. Since $X_K$ is connected, we can include the $l=0$ case.

We write $E_J=\{  F_i=0, \forall i \}\subset X$. By the same kind of exact sequence argument, the restriction
\[
H^0(M, kL)\to H^0(E_J, kL), \quad k\geq 0,
\]
is surjective, so we can pick a subspace $V_k\subset H^0(M,kL)$ mapping isomorphically to $H^0(E_J, kL)$.

\begin{lem}
The natural map of $\C$-vector spaces \[
W_l=\bigoplus_{\sum d_il_i\leq l}  F_0^{l_0}F_1^{l_1}\ldots F_m^{l_m} V_{l-\sum d_i l_i}\to H^0(M, lL)
\] is an isomorphism.
\end{lem}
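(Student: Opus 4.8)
The plan is a double induction, on the number $m+1$ of factors $F_i$ and on the degree $l$, peeling off the divisor $E_0=\{F_0=0\}$ at each step. The point that makes the $m$-induction run is that every hypothesis of the Lemma is inherited by the ambient $E_0$ with one factor removed: by adjunction $E_0$ is again smooth Fano with $-K_{E_0}=(\sum_{i=1}^m d_i)L|_{E_0}$; the divisors $E_0\cap E_1,\dots,E_0\cap E_m$ are smooth with transverse intersections, $E_0\cap E_i\in|d_i(L|_{E_0})|$, and their common intersection is again $E_J$; and since $V_k\xrightarrow{\sim}H^0(E_J,kL)$ factors through $H^0(E_0,kL)$, the restriction $V_k\to H^0(E_0,kL)$ is injective, with image a subspace $V_k|_{E_0}$ mapping isomorphically onto $H^0(E_J,kL)$. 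The base case $m=0$ (a single factor, $E_J=E_0$) is just the degree induction described below with $W_l^0:=V_l$.

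Fix $l\ge1$. The crux is the restriction sequence on $M$,
\[
0\to H^0(M,(l-d_0)L)\xrightarrow{\ \cdot F_0\ }H^0(M,lL)\xrightarrow{\ \rho_0\ }H^0(E_0,lL)\to 0,
\]
which is exact: surjectivity on the right is $H^1(M,(l-d_0)L)=H^1(M,K_M+\mathrm{ample})=0$ by Kodaira vanishing, and the left term simply vanishes when $l<d_0$. Splitting the summands of $W_l$ according to whether $l_0=0$ gives an internal decomposition, compatible with evaluation into $H^0(M,lL)$,
\[
W_l=W_l^0\oplus F_0\cdot W_{l-d_0},\qquad W_l^0:=\bigoplus_{\sum_{i\ge1}d_il_i\le l}F_1^{l_1}\cdots F_m^{l_m}\,V_{l-\sum_{i\ge1}d_il_i},
\]
and $\rho_0$ annihilates $F_0\cdot W_{l-d_0}$. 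The key observation is that $\rho_0$ carries $W_l^0$ isomorphically onto the analogous sum for the ambient $E_0$ (factors $F_i|_{E_0}$, subspaces $V_k|_{E_0}$): on each summand, $F_1^{l_1}\cdots F_m^{l_m}v\mapsto(F_1|_{E_0})^{l_1}\cdots(F_m|_{E_0})^{l_m}(v|_{E_0})$ is an isomorphism onto the corresponding summand because $v\mapsto v|_{E_0}$ is an isomorphism $V_{l-\sum}\to V_{l-\sum}|_{E_0}$ and multiplication by the nonzero section $\prod(F_i|_{E_0})^{l_i}$ is injective. Hence, by the inductive hypothesis on $m$ applied to $E_0$, the map $\rho_0|_{W_l^0}\colon W_l^0\to H^0(E_0,lL)$ is an isomorphism.

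The Lemma then follows by chasing the sequence. For surjectivity: given $s\in H^0(M,lL)$, pick $w^0\in W_l^0$ with $\rho_0(w^0)=\rho_0(s)$; then $s-w^0=F_0s'$ with $s'\in H^0(M,(l-d_0)L)$, which is $0$ if $l<d_0$ and is otherwise the image of some $w'\in W_{l-d_0}$ by the induction on $l$, so $s$ is the image of $w^0+F_0w'\in W_l$. For injectivity: if $w^0+F_0w'$ maps to $0$ in $H^0(M,lL)$, applying $\rho_0$ gives $\rho_0(w^0)=0$, hence $w^0=0$; then $F_0\cdot(\text{image of }w')=0$ on the integral variety $M$ forces the image of $w'$ to vanish, so $w'=0$ by the induction on $l$. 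The base $l=0$ is immediate: $W_0=V_0\hookrightarrow H^0(M,\mathcal O_M)=\C$ is an isomorphism since $V_0\xrightarrow{\sim}H^0(E_J,\mathcal O_{E_J})=\C$, using that $E_J$ is connected.

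Almost everything here is bookkeeping; the only genuine input is the surjectivity of the restriction maps $H^0(\cdot,kL)\to H^0(\text{hyperplane section},kL)$, i.e. the Kodaira vanishing invoked above, and one must check it persists on the successive ambients $E_0$, $E_0\cap E_1,\dots$ — all Fano of dimension $\ge 2$ throughout since $m\le n-1$, so that the relevant $H^1$ vanish and the strata $E_J\subset\cdots\subset E_0\subset M$ are connected. The one thing I expect to require care is confirming that the decomposition $W_l=W_l^0\oplus F_0\cdot W_{l-d_0}$ is genuinely compatible with evaluation into global sections, so that the diagram chase is legitimate; once that is granted, injectivity and surjectivity are both forced.
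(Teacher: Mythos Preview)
Your argument is correct. The double induction on the number of factors and on the degree, using the short exact restriction sequence to $E_0$, is a clean and self-contained way to prove the Lemma; the splitting $W_l=W_l^0\oplus F_0\cdot W_{l-d_0}$ is indeed compatible with the evaluation maps, and the inherited hypotheses on $E_0$ are exactly as you state.

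It is worth noting, though, that the paper takes a rather different and shorter route. Injectivity is argued directly by observing that distinct summands $F_0^{l_0}\cdots F_m^{l_m}V_{l-\sum d_il_i}$ have distinct tuples of vanishing orders along the $E_i$, so a relation $\sum F_0^{l_0}\cdots F_m^{l_m}s_{l_0,\ldots,l_m}=0$ forces all terms to vanish by an easy induction. Surjectivity is then obtained by a pure dimension count: one computes the generating functions $\sum_l \dim W_l\,y^l=\prod_i(1-y^{d_i})^{-1}P_{E_J}(y)$ and uses the standard identity $P_{E_J}(y)=\prod_i(1-y^{d_i})\,P_M(y)$ for Hilbert series under successive hyperplane sections. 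Your approach avoids the Hilbert series identity and is more constructive (one can actually read off the decomposition of a given section), at the cost of running a longer induction and having to keep track of the ambient Fano condition on the intermediate strata. The paper's approach is slicker but relies on that global numerical identity; both ultimately use the same Kodaira vanishing input.
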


\begin{proof}
The map is injective, because the distinct summands have different vanishing orders along the divisors $E_i=(F_i=0)$, so that given a decomposition $\sum F_0^{l_0}\ldots F_m^{l_m} s_{l_0,\ldots l_m}=0$, we can inductively prove the summands are zero.

To prove surjectivity, it suffices to compare the dimension, so it suffices to prove the equality of the generating functions
\[
\sum_{l\geq 0} \dim W_l  y^l= \sum_{l\geq 0} h^0(M,lL) y^l.
\]
The RHS is the Hilbert polynomial $P_M(y)$ for $(M,L)$. The LHS is 
\[
(1- y^{d_0})\ldots (1-y^{d_m}) P_{E_J}(y),
\]
where $P_{E_J}(y)$ is the Hilbert polynomial of $(E_J,L)$. The result then follows from the well known relation of Hilbert polynomials
\[
P_{E_J}(y)=\frac{1}{ (1- y^{d_0})\ldots (1-y^{d_m})} P_M(y).
\]
\end{proof}

We thus identify $W_l$ with $H^0(M,lL)$, and
denote
$\tilde{W}_l\subset W_l$ as the subspace of $W_l$,
consisting of those indices $(l_0,\ldots l_m)$ with at least one zero entry.

\begin{lem}
The section $s\in H^0(X_K, lL)$ can be lifted to $\tilde{W}_l\otimes_\C K$.

\end{lem}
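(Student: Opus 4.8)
The statement to prove is that any section $s\in H^0(X_K, lL)$ admits a lift to $\tilde W_l\otimes_\C K$, i.e. a lift whose $W_l$-expansion involves only monomials $F_0^{l_0}\cdots F_m^{l_m}V_{l-\sum d_il_i}$ with at least one $l_i=0$. The key algebraic fact driving this is the equation $F_0F_1\cdots F_m = -tF$ holding on $X_K$ (more precisely, on the model $\mathcal{X}$), which means that the ``full monomial'' $F_0F_1\cdots F_m$ is, on $X_K$, congruent to $-t$ times a section of $-K_M = (\sum d_i)L$. So modulo the ideal of $X_K$ inside $M_K$, multiplication by $F_0\cdots F_m$ can be traded for multiplication by $t$ times a lower-degree factor, at the cost of a single power of $t$.

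\textbf{Main steps.} First, using the previous lemma, lift $s$ arbitrarily to a section $\hat s\in W_l\otimes_\C K = H^0(M_K, lL)$; expand it in the direct sum decomposition as $\hat s = \sum_{(l_0,\dots,l_m)} F_0^{l_0}\cdots F_m^{l_m} s_{(l_0,\dots,l_m)}$ with $s_{(l_0,\dots,l_m)}\in V_{l-\sum d_il_i}\otimes_\C K$. Second, I would run an induction (or a ``reduction'') on the quantity $k_{\min}(\hat s) := \min\{\,\min_i l_i : (l_0,\dots,l_m)\ \text{appears with nonzero coefficient}\,\}$: if this is $\geq 1$ then every term is divisible by $F_0F_1\cdots F_m$, so on $X_K$ we may substitute $F_0\cdots F_m = -tF$ and rewrite $\hat s$ (as a section over $X_K$) as $-t$ times $F\cdot(\text{a section of }(l-\sum d_i)L\text{ over }X_K)$; re-lift that section to $W_{l-\sum d_i}\otimes K$, multiply back by $-tF$, and re-expand in $W_l$. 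Each such step strictly decreases $k_{\min}$ while only modifying the section by something in the ideal of $X_K$ (so the restriction to $X_K$ is unchanged), and after finitely many steps we land in $\tilde W_l\otimes_\C K$. One should check that the process terminates, which is clear since $k_{\min}$ drops by at least one each time and is bounded below by $0$, and that at termination the resulting element lies in $\tilde W_l\otimes K$ and still restricts to $s$.

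\textbf{Bookkeeping point / main subtlety.} The one genuinely delicate point is making precise the sentence ``on $X_K$, $F\cdot(\text{section of }(l-\sum d_i)L)$ re-lifts to $W_{l-\sum d_i}\otimes K$ and then re-expands'': this is exactly the earlier surjectivity $H^0(M_K, l'L)\twoheadrightarrow H^0(X_K, l'L)$ (from Kodaira vanishing) applied at $l' = l-\sum d_i$, together with the identification $W_{l'}\cong H^0(M_K,l'L)$, so no new input is needed, but one must track that the ideal-of-$X_K$ correction terms introduced at each stage do not spoil the restriction $\hat s|_{X_K} = s$. Since $X_K = \{F_0\cdots F_m + tF = 0\}$ is a principal divisor in $M_K$, the ideal sheaf is generated by $F_0\cdots F_m + tF$, and every modification we make is visibly a multiple of this generator, so the restriction is preserved at each step. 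Equivalently and perhaps more cleanly: work directly with the quotient ring, i.e. note $H^0(X_K, lL)$ is computed from the graded quotient of $\bigoplus H^0(M_K, *L)$ by the principal ideal $(F_0\cdots F_m + tF)$, and observe that the images of the $\tilde W_l$ already span this quotient because any monomial with all $l_i\geq 1$ can be rewritten mod $(F_0\cdots F_m+tF)$ as $-t$ times something of strictly smaller $k_{\min}$; the $K$-coefficients (i.e. allowing Laurent series in $t$) give us the room to absorb the extra powers of $t$ that accumulate. That reformulation is probably the shortest route and is the one I would write up.
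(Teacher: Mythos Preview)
Your core idea --- iteratively substituting $F_0\cdots F_m = -tF$ on $X_K$ --- is right, and is what the paper does. But the induction on $k_{\min}(\hat s)$ does not terminate where you claim: having $k_{\min}(\hat s)=0$ only says that \emph{some} term of $\hat s$ has a zero exponent, whereas membership in $\tilde W_l$ requires that \emph{every} term does. Concretely, if $\hat s = F_0\cdots F_m\, v$ with $v\in V_{l-\sum d_i}$, one substitution yields $-tFv$; re-expanding $Fv$ in $W_l$ will typically still have a nonzero $F_0\cdots F_m$-component, so the ``bad'' part does not vanish in finitely many passes --- it merely acquires an extra factor of $t$ at each pass. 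Your alternative phrasing ``any monomial with all $l_i\geq 1$ can be rewritten \ldots\ as $-t$ times something of strictly smaller $k_{\min}$'' has the same defect: even granting it, the ``something'' may again contain bad monomials, so the reduction is not well-founded.

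The fix --- which your final sentence about ``absorbing the extra powers of $t$'' essentially identifies --- is $t$-adic rather than finite, and this is exactly the paper's argument. The paper uses the splitting $H^0(M,lL)=\tilde W_l\oplus F_0\cdots F_m\,H^0(M,(l-\sum d_i)L)$, first clears denominators so that $t^Ns\in H^0(M,lL)\otimes R$, and then iterates: each pass pushes the non-$\tilde W_l$ remainder one power of $t$ deeper, yielding $t^Ns\in\text{Image}(\tilde W_l\otimes R + t^{N'}H^0(M,lL)\otimes R)$ for every $N'$. Completeness of $R=\C[[t]]$ then gives the lift in $\tilde W_l\otimes R$, and one divides by $t^N$. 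So replace the $k_{\min}$ bookkeeping with this $t$-adic filtration and your argument goes through.
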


\begin{proof}
We first lift $s$ to $H^0(M_K, lL)$. After multiplying $s$ by a fixed finite power of $t$, we have $t^N s\in H^0(M, lL)\otimes R$.   Notice
\[
H^0(M, lL)=\tilde{W}_l \oplus F_0\ldots F_m H^0(M, (l-\sum d_i)L)
\]
Upon restriction to $X$, we can replace $F_0\ldots F_m $ with $-tF$, so that
\[
t^N s\in \text{Image}( \tilde{W}_l\otimes R + t H^0(M,lL)\otimes R).
\]	
Iterating the argument,
\[
t^N s\in  \text{Image}(  \tilde{W}_l\otimes R + t^{N'} H^0(M,lL)\otimes R  ), \quad \forall N'.
\]
Passing to the completion and dividing by $t^N$ yields the claim.	
\end{proof}

Thus $s\in H^0(X_K, lL)$ has an expansion
\begin{equation}\label{monomialdecomposition}
s=\sum_{l_0,\ldots l_m}  F_0^{l_0}\ldots F_m^{l_m} \sum_a s_{l_0\ldots l_m, a} t^a,
\end{equation}
where the integer $a$ is bounded from below, $l_0,\ldots l_m$ are non-negative integers with at least one zero entry, and $s_{l_0\ldots l_m, a}\in V_{l-\sum d_il_i}$ is either zero, or does not vanish identically along the intersection stratum $E_J$.

By picking any local trivialising section of $L^{\otimes l}$, we can regard $t^N s$ as a local function in $k(E_J)[[z_0,\ldots z_m]]$, where $z_i$ are the local defining equations for $E_i=(F_i=0)$.  We can then evaluate $s$ on the quasi-monomial valuation points inside the essential skeleton $Sk(X)\subset X_K^{an}$. We recall that $Sk(X)$ is identified with the simplex
\begin{equation}
Sk(X)\simeq \Delta= \{  \sum_0^m x_i=1, x_i\geq 0     \}\subset \R^{n+1},
\end{equation}
since the dlt model has only one depth $m$ intersection stratum, and all divisor multiplicities $b_i=1$.

\begin{cor}\label{logsonessentialskeleton}
	(No cancellation)
As a function on $Sk(X)$, 
\[
-\log | s| =\min \{   l_0x_0+ \ldots l_m x_m + a   : s_{l_0\ldots l_m,a}\neq 0     \}.
\]
By construction $d_0l_0+\ldots d_ml_m\leq l$. 
\end{cor}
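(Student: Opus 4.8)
The plan is to evaluate the expansion (\ref{monomialdecomposition}) directly on the quasi-monomial valuation points of $Sk(X)$, using the two key facts established above: that each $s_{l_0\ldots l_m,a}$ is either zero or nonvanishing along $E_J$, and that the monomials $F_0^{l_0}\ldots F_m^{l_m}$ have vanishing order $l_ix_i$ at the point $x\in \Delta\simeq Sk(X)$ after picking a local trivialisation of $L^{\otimes l}$. First I would fix a local trivialising section of $L^{\otimes l}$ near the generic point $\eta$ of $E_J$, so that $t^N s$ becomes an element of $k(E_J)[[z_0,\ldots,z_m]]$ with $z_i$ the local equation of $E_i$; under this identification $F_i$ is (up to a unit) the coordinate $z_i$, and $t$ has vanishing order $1$ in each $z_i$ since all $b_i=1$. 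By the definition of the quasi-monomial valuation $v_x$ recalled earlier (minimal weighted vanishing order with weights $x_i$), each term $F_0^{l_0}\ldots F_m^{l_m} s_{l_0\ldots l_m,a} t^a$ contributes weighted order $l_0x_0+\ldots+l_mx_m+a$, because $s_{l_0\ldots l_m,a}$, being a nonzero element of $V_{l-\sum d_il_i}\subset k(E_J)$, is a unit in the local ring and so contributes order zero — here is where the `no vanishing along $E_J$' property is essential.

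The crux is then to argue that there is \emph{no cancellation} among the distinct monomial terms, i.e. that the valuation of the sum equals the minimum of the valuations of the summands. The standard mechanism is: group the terms by their exponent tuple $(l_0,\ldots,l_m,a)\in\N^{m+1}\times\Z$; distinct tuples achieving the same weighted value $\langle x,(l_0,\ldots,l_m)\rangle + a$ have coefficients that are $k(E_J)$-linearly independent (indeed they lie in distinct monomial types $z_0^{l_0}\cdots z_m^{l_m}t^a$, and after writing $t$ itself monomially one checks the surviving lowest-order part is a nonzero polynomial in the $z_i$ with coefficients in $V_\bullet$ that cannot vanish identically). Concretely, collect the lowest-weighted-order part of $t^N s$: it is a finite sum $\sum z_0^{l_0}\cdots z_m^{l_m}\,\overline{s}_{l_0\ldots l_m,a}$ over the minimising tuples (after absorbing powers of $t$ into the $z_i$), and since the $\overline{s}$'s are nonzero in the field $k(E_J)$ and the monomials are distinct, this sum is a nonzero element, so $v_x(t^Ns)$ equals the minimum. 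Subtracting $N = v_x(t^N)$ (which is $N$ since $v_x(t)=1$) gives $-\log|s|(x) = v_x(s) = \min\{l_0x_0+\ldots+l_mx_m+a : s_{l_0\ldots l_m,a}\neq 0\}$ as claimed, and the bound $\sum d_il_i\leq l$ is inherited directly from the defining decomposition of $W_l$.

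The main obstacle I anticipate is making the no-cancellation step fully rigorous across the mixing of the $z_i$-adic and $t$-adic gradings — one must be careful that two a priori different $(l_0,\ldots,l_m,a)$ cannot conspire, after substituting the monomial expression for $t$ (with its unit factor $f_2$), to produce leading terms that cancel. The clean way is to observe that for a fixed weight vector $x$ with rationally independent-enough entries the minimising tuple is unique, handle that case first, and then pass to general $x\in\Delta$ by continuity of both sides (both the left side, as $v_x$ varies continuously on $X_K^{an}$, and the right side, being a min of finitely many affine functions); alternatively, one notes the weighted order of a genuine monomial $z_0^{l_0}\cdots z_m^{l_m}t^a$ in the $z_i$-variables \emph{plus} the constraint $\sum b_ix_i=1$ pins down the contribution unambiguously, so the leading piece is an honest nonzero Laurent monomial combination in $k(E_J)$. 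Once that combinatorial bookkeeping is in place the rest is immediate from the preceding two lemmas.
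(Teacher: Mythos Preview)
Your overall framework is correct and matches the paper's: trivialise, write $t$ monomially in the $z_i$ via $F_0\cdots F_m=-tF$, and argue that the valuation of the sum equals the minimum of the valuations of the terms. The gap is precisely where you flag it, and your proposed fixes do not close it.

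After substituting $t=(\text{unit})\,z_0\cdots z_m$, the leading $z$-monomial of the term indexed by $(l_0,\ldots,l_m,a)$ is $z_0^{l_0+a}\cdots z_m^{l_m+a}$. You assert ``the monomials are distinct'' but never check it. Your first workaround (generic $x$ with rationally independent entries, then continuity) is circular: two tuples $(l_0,\ldots,l_m,a)$ and $(l_0',\ldots,l_m',a')$ yield the \emph{same} $z$-monomial iff $(l_0+a,\ldots,l_m+a)=(l_0'+a',\ldots,l_m'+a')$, and in that case they also define the \emph{same} affine function $\sum l_ix_i+a$ on $\Delta=\{\sum x_i=1\}$. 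So for \emph{every} $x\in\Delta$, generic or not, these two terms tie, and their $k(E_J)$-coefficients could in principle cancel; generic rational independence does not separate them. Your third remark about the constraint $\sum b_ix_i=1$ ``pinning down the contribution'' is likewise only true once you already know the exponent tuples are distinct.

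The missing step---and this is exactly how the paper proceeds---is the one-line combinatorial check that the map $(l_0,\ldots,l_m,a)\mapsto (l_0+a,\ldots,l_m+a)$ is injective on the index set appearing in the expansion from $\tilde{W}_l$. This is where the condition ``at least one $l_i=0$'' is essential: given the image tuple, one recovers $a=\min_i(l_i+a)$, hence $(l_0,\ldots,l_m)$ as well. Once you insert this observation, all $z$-monomials are genuinely distinct, the lowest-order part is automatically nonzero, and the rest of your argument goes through without any appeal to generic $x$ or continuity.
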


\begin{proof}
Recall that to compute the quasi-monomial valuation on a function, we need to Taylor expand in the $z_0,\ldots z_m$ variables. Notice that by $F_0\ldots F_m=-tF$, and the fact that $F$ is nonzero in $k(E_J)$, we know $t$ can be expanded as the product $z_0\ldots z_m$ (up to a unit in $k(E_J)$, which is inessential).

The crucial point is that all the exponents in the monomial decomposition (\ref{monomialdecomposition}) are \emph{distinct}, so there is \emph{no cancellation effect}. Concretely, given
\[(l_0+a,\ldots , l_m+a)= (l_0'+a',\ldots l_m'+a'),
\]
we need to check $l_0=l_0',\ldots l_m=l_m', a=a'$. Here it is essential that at least one of $l_0,\ldots l_m$ is zero, so that $a$ can be recovered as $\min(l_0+a,\ldots l_m+a)$, which shows $a=a'$, and the claim is now clear.
\end{proof}

\subsection{Fubini-Study approximation}

We now consider the NA CY metric $\phi_{CY,0}$ (\cf Example \ref{Fanohypersurfaceeg2}). By the characterization of continuous semipositive metrics as uniform limits of Fubini-Study metrics, we can find a sequence of Fubini-Study metrics of the form
\[
\phi_j=\frac{1}{l} \max_{i=1}^{N} ( \log |s_i|+ c_i), \quad s_i\in H^0(X, lL), \quad c_i\in \R,
\]
such that \[
-\epsilon_j\leq \phi_j- \phi_{CY,0}\leq 0,\quad \epsilon_j\to 0.
\]
Here we suppress the dependence of $l, N$ etc on the sequential index $j$, and in particular $l$ is unbounded.

We expand as in (\ref{monomialdecomposition}),
\[
s_i= \sum F_0^{l_0}\ldots F_m^{l_m}\sum s^{(i)}_{l_0\ldots l_m,a}t^a.
\]
By Cor. \ref{logsonessentialskeleton}, restricted to the essential skeleton, each $-\log |s_i|$ is of the form 
\[
-\log |s_i|=\min\{ l_0x_0+\ldots l_mx_m +a |  s^{(i)}_{l_0\ldots l_m,a}\neq 0 \}.
\]
Here $d_0l_0+ \ldots d_ml_m\leq l$, and at least one of $l_i$ is zero by construction. We can then introduce the new potential
\begin{equation}\label{improvedFS}
\tilde{\phi}_j= \frac{1}{l} \max_{i=1}^N \max\{  l_0\log |F_0|+\ldots +l_m\log |F_m| +c_i-a : s^{(i)}_{l_0\ldots l_m,a}\neq 0    \}.
\end{equation}
Notice that only finitely many terms actually contribute to the maximum, since the $a\gg 1$ terms are very negative.
We list some properties:

\begin{lem}
The potential $\tilde{\phi}_j$ is a NA Fubini-Study metric, which agrees with $\phi_j$ on $Sk(X)$, and $\tilde{\phi}_j\geq \phi_j$ on $X_K^{an}$. 
\end{lem}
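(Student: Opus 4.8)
The plan is to verify the three assertions in the order: (1) $\tilde{\phi}_j$ is a NA Fubini-Study metric; (2) $\tilde{\phi}_j = \phi_j$ on $Sk(X)$; (3) $\tilde{\phi}_j \geq \phi_j$ everywhere on $X_K^{an}$.

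For (1), I would observe that $\tilde{\phi}_j$ is written in the form $\frac{1}{l}\max_\beta(\log|\sigma_\beta| + c_\beta)$ where the sections $\sigma_\beta$ range over the monomials $F_0^{l_0}\cdots F_m^{l_m} s^{(i)}_{l_0\ldots l_m,a}$ (each of which, by construction, is a genuine global section of $lL$ on $X_K$, indeed lifting to $H^0(M_K, lL)$), and the constants $c_\beta = c_i - a$. Since only finitely many terms contribute (the $a \gg 1$ terms are dominated), this is a finite collection. The only thing to check for it to qualify as a Fubini-Study metric in the sense of (\ref{NAFubiniStudy}) is that these sections have no common zero on $X_K^{an}$; but since the original $s_i$ have no common zero and each $s_i$ is a $K$-linear combination of the $\sigma_\beta$ (with coefficients powers of $t$, hence units up to scaling on the relevant locus), the common zero locus of the $\sigma_\beta$ is contained in that of the $s_i$, hence empty. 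This should be essentially bookkeeping.

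For (2), the point is exactly Corollary \ref{logsonessentialskeleton}: on $Sk(X)$, using $\log|F_i| = -x_i$ and $\log|t| = -(x_0+\cdots+x_m) = -1$, the inner maximum $\max\{l_0\log|F_0|+\cdots+l_m\log|F_m| + c_i - a\}$ becomes $-\min\{l_0x_0+\cdots+l_mx_m + a\} + c_i = \log|s_i| + c_i$, using precisely the "no cancellation" statement that $-\log|s_i| = \min\{l_0x_0+\cdots+l_mx_m+a : s^{(i)}_{l_0\ldots l_m,a}\neq 0\}$. Taking $\frac{1}{l}\max_i$ of both sides gives $\tilde{\phi}_j = \phi_j$ on $Sk(X)$.

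For (3), which I expect to be the main obstacle, the idea is that dropping to a single monomial term can only increase a valuation: for any semivaluation $v \in X_K^{an}$ (equivalently seminorm $|\cdot|$), the ultrametric inequality gives $|s_i| \leq \max_{l_0\ldots l_m, a}|F_0^{l_0}\cdots F_m^{l_m} s^{(i)}_{l_0\ldots l_m,a} t^a|$, hence $\log|s_i| \leq \max\{l_0\log|F_0|+\cdots+l_m\log|F_m| + a\log|t| - \log|s^{(i)}_{l_0\ldots l_m,a}|^{-1}\cdots\}$ — but here I must be careful: the term $s^{(i)}_{l_0\ldots l_m,a}$ is itself a section of $V_{l-\sum d_il_i}$, not a unit, so $\log|s^{(i)}_{l_0\ldots l_m,a}|$ need not vanish off the essential skeleton. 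The cleanest route is to compare via a fixed model metric: both $\phi_j$ and $\tilde\phi_j$ are potentials against the preferred model metric, so $|s_i|$ is computed against a local trivialization, and the monomial $F_0^{l_0}\cdots F_m^{l_m} t^a$ factor is what governs the difference, with the residual section $s^{(i)}_{l_0\ldots l_m,a}$ contributing a non-positive term $\log|s^{(i)}_{l_0\ldots l_m,a}| \leq 0$ to $-\log$ of it — wait, that has the wrong sign for the inequality $\tilde\phi_j \geq \phi_j$. I would resolve this by noting that in passing from $\phi_j$ to $\tilde\phi_j$ we have \emph{replaced} each section $s_i = \sum F_0^{l_0}\cdots F_m^{l_m}(\sum_a s^{(i)}_{l_0\ldots l_m,a}t^a)$ by the larger collection consisting of the individual terms $F_0^{l_0}\cdots F_m^{l_m} s^{(i)}_{l_0\ldots l_m,a}$ (with $t^a$ absorbed into the constant $c_i - a$, valid since $\log|t| \equiv -1$ on $X_K^{an}$ as $t$ is the uniformizer and all $v$ extend the standard valuation), and then \emph{dropped} the residual factor $s^{(i)}_{l_0\ldots l_m,a}$ down to its value on $E_J$ — but the correct statement is that $|F_0^{l_0}\cdots F_m^{l_m} s^{(i)}_{l_0\ldots l_m,a}| \cdot e^{-a} \geq$ the contribution of that term to $|s_i|$ is false in general, so instead the honest argument is: the maximum over terms bounds the sum (ultrametric), giving $|s_i|(x) \leq \max_{l_0\ldots l_m,a}|F_0^{l_0}\cdots F_m^{l_m} s^{(i)}_{l_0\ldots l_m,a} t^a|(x)$; since $s^{(i)}_{l_0\ldots l_m,a} \in V_{l-\sum d_il_i} \subset H^0(M,(l-\sum d_il_i)L)$ has $|s^{(i)}_{l_0\ldots l_m,a}|(x) \leq \|s^{(i)}_{l_0\ldots l_m,a}\|_{\sup} \leq$ (normalization constant) $\cdot 1$ in the model metric, one sees $|F_0^{l_0}\cdots F_m^{l_m} s^{(i)}_{l_0\ldots l_m,a}|(x) \leq |F_0|^{l_0}\cdots|F_m|^{l_m}(x)$ up to the implicit trivialization, and the bound we want follows after taking logs, multiplying by $1/l$, and maximizing over $i$. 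The delicate point to get exactly right is the treatment of the trivializing sections and the constants, which is why I flag this step as the main obstacle; but conceptually it is just the ultrametric inequality plus the fact that a residual section that is regular on $M$ has norm $\leq 1$ in the model metric.
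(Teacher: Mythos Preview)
Your arguments for (2) and, after the meandering, for (3) are essentially correct and match the paper's: equality on $Sk(X)$ is precisely Corollary \ref{logsonessentialskeleton}, and the global inequality follows from the ultrametric bound $|s_i|\leq \max |F_0|^{l_0}\cdots|F_m|^{l_m}|s^{(i)}_{l_0\ldots l_m,a}||t|^a$ together with the fact that a regular section $s^{(i)}_{l_0\ldots l_m,a}\in H^0(M,(l-\sum d_kl_k)L)$ has $\log|s^{(i)}_{l_0\ldots l_m,a}|\leq 0$ in the model metric.

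There is, however, a genuine gap in (1). You claim that $\tilde{\phi}_j=\frac{1}{l}\max_\beta(\log|\sigma_\beta|+c_\beta)$ with $\sigma_\beta=F_0^{l_0}\cdots F_m^{l_m}s^{(i)}_{l_0\ldots l_m,a}$, but this is not what formula (\ref{improvedFS}) says: there the residual factor $s^{(i)}_{l_0\ldots l_m,a}$ does \emph{not} appear inside the logarithm (it only enters as the index condition $s^{(i)}_{l_0\ldots l_m,a}\neq 0$). Your $\sigma_\beta$ define a potential that agrees with $\tilde{\phi}_j$ on $Sk(X)$ (since $\log|s^{(i)}_{l_0\ldots l_m,a}|=0$ there) but is strictly smaller wherever some $s^{(i)}_{l_0\ldots l_m,a}$ vanishes on the central fibre. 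You have implicitly noticed this yourself in (3), where you observe that $\tilde{\phi}_j$ has ``dropped'' the residual factor; this is inconsistent with your choice of $\sigma_\beta$ in (1).

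The fix, which is what the paper does, is to fill the missing degree $l-\sum d_kl_k$ not with $s^{(i)}_{l_0\ldots l_m,a}$ but with the globally generating sections $\tau_p$ of $N_0L$: one rewrites
\[
\tilde{\phi}_j=\frac{1}{l}\max\Bigl\{\, l_0\log|F_0|+\cdots+l_m\log|F_m|+\tfrac{l-\sum d_kl_k}{N_0}\max_p\log|\tau_p|+c_i-a \,\Bigr\},
\]
which is an identity because $\max_p\log|\tau_p|\equiv 0$ in the preferred model metric. Now the sections $F_0^{l_0N_0}\cdots F_m^{l_mN_0}\tau_p^{\,l-\sum d_kl_k}$ have total degree $lN_0$, their common zero locus is contained in that of the $s_i$ (hence empty), and $\tilde{\phi}_j$ is manifestly Fubini--Study. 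This same rewriting also makes the inequality in (3) transparent: it is exactly $\log|s^{(i)}_{l_0\ldots l_m,a}|\leq \tfrac{l-\sum d_kl_k}{N_0}\max_p\log|\tau_p|$.
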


\begin{proof}
The equality $\phi_j=\tilde{\phi}_j$ holds on $Sk(X)$ by construction. To see that $\tilde{\phi}_j$ is a Fubini-Study metric, we recall from Example \ref{Fanohypersurfaceeg2} that the identification of metrics with potentials involves a preferred model metric, which is itself a NA Fubini-Study metric
$
\frac{1}{N_0}\max_p \log |\tau_p|.
$
Then $\tilde{\phi}_j$ is a Fubini-Study metric of the form
\[
\frac{1}{l}  \max\{  (l_0\log |F_0|+\ldots +l_m\log |F_m|+\frac{(l-\sum d_kl_k)}{N_0}\max \log |\tau_p|)+c_i-a : s^{(i)}_{l_0\ldots l_m,a}\neq 0    \}.
\]
This way of writing makes it manifest that the total degree of the sections add up to $l$. It is also clear that the common zero locus of all the sections involved in $\tilde{\phi}_j$ must be contained in the analogous locus for $\phi_j$, which is empty by assumption.

Finally we need to show $\tilde{\phi}_j\geq \phi_j$ on $X_K^{an}$. This follows from the ultrametric property
\[
|s_i|\leq \max \{  |F_0|^{l_0}\ldots |F_m|^{l_m} |s_{l_0\ldots l_m,a}| |t|^a        \},
\]
and 
\[
\log |s_{l_0\ldots l_m,a}| \leq \frac{l-\sum d_kl_k }{N_0}\max\{  \log |\tau_p|  \},
\]
which hold on all semi-valuations, not just those on $Sk(X)$.
\end{proof}

\begin{lem}\label{domination}
The potential $\tilde{\phi}_j\leq \phi_{CY,0}$. 
\end{lem}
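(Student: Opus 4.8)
\subsection*{Proof proposal for Lemma \ref{domination}}

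The plan is to deduce the inequality from the non-archimedean domination principle for the Monge-Amp\`ere operator attached to an ample line bundle, exploiting the structural fact that $MA(\phi_{CY,0})$ lives on the essential skeleton. First I would collect the two ingredients that make the hypotheses of that principle applicable. On the one hand, the preceding lemma shows that $\tilde{\phi}_j$ is a genuine NA Fubini-Study metric on $L\to X_K^{an}$, hence a continuous semipositive metric, and that $\tilde{\phi}_j=\phi_j$ on $Sk(X)$. On the other hand, $\phi_{CY,0}$ is by construction (Example \ref{Fanohypersurfaceeg2}) the continuous semipositive solution of the NA Calabi conjecture with $MA(\phi_{CY,0})=(L^n)\mu_0$; in particular $MA(\phi_{CY,0})$ is, up to the factor $(L^n)$, the probability measure $\mu_0$ supported on $Sk(X)$. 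Both metrics are written as potentials with respect to the same preferred model metric, so the comparison $\tilde{\phi}_j\le\phi_{CY,0}$ is meaningful. Combining $\tilde{\phi}_j=\phi_j$ on $Sk(X)$ with the Fubini-Study approximation bound $\phi_j-\phi_{CY,0}\le 0$ gives $\tilde{\phi}_j\le\phi_{CY,0}$ on $Sk(X)$, hence $MA(\phi_{CY,0})$-almost everywhere.

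Then I would invoke the domination principle (\cf \cite{Boucksom}, \cite{Boucksomsurvey}, the NA counterpart of the classical statement in complex pluripotential theory): if $\varphi$ is a continuous semipositive metric, $\psi$ a bounded semipositive metric, and $\psi\le\varphi$ holds $MA(\varphi)$-almost everywhere, then $\psi\le\varphi$ on all of $X_K^{an}$. Applying this with $\varphi=\phi_{CY,0}$ and $\psi=\tilde{\phi}_j$ yields $\tilde{\phi}_j\le\phi_{CY,0}$ everywhere, which is the claim.

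The argument is thus entirely concentrated in the domination principle, and the only point requiring care is checking that its hypotheses are legitimately satisfied in the discretely valued setting: that $\phi_{CY,0}$ is continuous (part of the NA Calabi theorem) and that $\tilde{\phi}_j$ lies in the relevant class (clear, since it is Fubini-Study, hence continuous and semipositive on the compact space $X_K^{an}$). No new estimate or geometric input beyond the no-cancellation Corollary \ref{logsonessentialskeleton}—already used to build $\tilde{\phi}_j$—is needed; alternatively, one may phrase the same step as the maximum principle that $\sup_{X_K^{an}}(\tilde{\phi}_j-\phi_{CY,0})$ is attained on $\mathrm{supp}\, MA(\phi_{CY,0})=Sk(X)$.
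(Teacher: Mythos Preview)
Your proposal is correct and follows essentially the same route as the paper: both potentials are continuous semipositive, $\tilde{\phi}_j=\phi_j\le\phi_{CY,0}$ on $Sk(X)=\mathrm{supp}\,MA(\phi_{CY,0})$, and the domination principle \cite[Lemma 8.4]{Boucksom} gives the global inequality. The paper's proof is just a two-line version of what you wrote.
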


\begin{proof}
Both potentials correspond to continuous semipositive metrics, and $\tilde{\phi}_j=\phi_j\leq \phi_{CY,0}$ holds on  $\text{supp} MA(\phi_{CY,0})=Sk(X)$. By the domination principle \cite[Lemma 8.4]{Boucksom}, we obtain $\tilde{\phi}_j\leq \phi_{CY,0}$ on $X_K^{an}$.
\end{proof}

As a Corollary, $-\epsilon_j\leq \tilde{\phi}_j-\phi_{CY,0}\leq 0$. 
The upshot is that we can replace $\phi_j$ by an improved Fubini-Study metric $\tilde{\phi}_j$, where instead of using arbitrary sections, we are using only specific sections coming from monomial expressions of $F_0,\ldots F_m$.

\subsection{Legendre transform}\label{Legendretransform}

We now reinterpret (\ref{improvedFS}) via the Legendre transform. We introduce simplices $\Delta^\vee_k$ for $k=0,1,\ldots m$, and denote $\Delta^\vee=\cup_k \Delta^\vee_k$:
\[
\Delta^\vee_k= \{  (p_0,\ldots p_m)\in \R^{m+1}_{\leq 0} | \sum d_ip_i \geq -1, p_k=0         \}.
\]
Suppose $u$ is a function on $\R^{m+1}$, of the form
\begin{equation}\label{Legendre1}
u(x)= \sup_{ p\in \Delta^\vee } \langle x, p\rangle - \tilde{u}(p), 
\end{equation}
for some bounded function $\tilde{u}$ on $\Delta^\vee$. Clearly $u$ is convex, and the gradient is contained in the convex hull of $\Delta^\vee$, so the Lipschitz constant of $u$ is a priori  bounded.

We can define a new function on $\Delta^\vee$ by
\begin{equation}
u^*(p)= \max_{x\in \Delta} (\langle x,p\rangle- u(x)), \quad \Delta=\{ x\in \R^{m+1}_{\geq 0}| \sum_0^m x_i=1  \},
\end{equation}
The Lipschitz constant of $u^*$ is a priori bounded. The double Legendre transform is defined as
\[
u^{**}(x)= \max_{p\in \Delta^\vee} \langle p,x\rangle- u^*(p).
\]

\begin{lem}
On $\Delta$ we have $u^{**}(x)=u(x)$. Globally on $\R^{m+1}$, we have $u^{**}\geq u$.
\end{lem}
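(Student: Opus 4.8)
The plan is to prove the two assertions separately, exploiting the general facts about Legendre transforms of the form appearing in \eqref{Legendre1}. The key structural input is that $u$ is already a "sup over $\Delta^\vee$'' type function, so it is convex and $1$-Lipschitz-type controlled; the claim is really that restricting the outer maximum in $u^*$ to the simplex $\Delta$ loses nothing when one double-dualises back, \emph{provided one only evaluates on $\Delta$}.

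First I would prove the easy global inequality $u^{**}\geq u$ on all of $\R^{m+1}$. This is the standard Young inequality / biconjugation estimate: for any $x\in\R^{m+1}$ and any $p\in\Delta^\vee$, the definition of $u^*$ as a max over $x'\in\Delta$ gives $u^*(p)\geq \langle x',p\rangle - u(x')$ for every $x'\in\Delta$; rearranged, $\langle x',p\rangle - u^*(p)\leq u(x')$. That does not immediately bound $u(x)$ for $x\notin\Delta$, so instead I would argue directly from \eqref{Legendre1}: write $u(x)=\langle x,p_x\rangle-\tilde u(p_x)$ for the (a) maximising $p_x\in\Delta^\vee$ (which exists by compactness of $\Delta^\vee$ and boundedness of $\tilde u$). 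Then $u^*(p_x)=\max_{x'\in\Delta}(\langle x',p_x\rangle-u(x'))\leq \max_{x'\in\Delta}(\langle x',p_x\rangle-(\langle x',p_x\rangle-\tilde u(p_x)))=\tilde u(p_x)$, using $u(x')\geq \langle x',p_x\rangle-\tilde u(p_x)$ from \eqref{Legendre1}. Hence $u^{**}(x)\geq \langle x,p_x\rangle-u^*(p_x)\geq \langle x,p_x\rangle-\tilde u(p_x)=u(x)$. This gives the second assertion.

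Next, the equality on $\Delta$. Combined with $u^{**}\geq u$, it suffices to show $u^{**}(x)\leq u(x)$ for $x\in\Delta$, i.e. $\langle p,x\rangle - u^*(p)\leq u(x)$ for every $p\in\Delta^\vee$ and $x\in\Delta$. But for $x\in\Delta$ the definition of $u^*$ as a maximum over $\Delta$ directly gives $u^*(p)\geq \langle x,p\rangle - u(x)$, which rearranges to exactly the desired inequality. Taking the max over $p\in\Delta^\vee$ yields $u^{**}(x)\leq u(x)$ on $\Delta$, and the reverse inequality is the global one just proved.

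The only point requiring a little care — and the place I expect a referee might want a sentence more — is the existence of the maximiser $p_x$ in \eqref{Legendre1} and, dually, the maximiser of $u^*$ over the compact simplex $\Delta$: both follow from compactness of $\Delta^\vee$ and $\Delta$ together with continuity of the integrands, the latter being clear since $u$ is convex hence continuous (finite convex functions on $\R^{m+1}$ are continuous), and $\tilde u$ bounded does not by itself guarantee continuity, but one may harmlessly replace $\tilde u$ by its lower semicontinuous hull or simply note that the supremum in \eqref{Legendre1} is unchanged, so $u$ is genuinely a max and the argument goes through. No genuine obstacle here; the lemma is a packaging of Fenchel duality restricted to the pair of compact bodies $(\Delta,\Delta^\vee)$.
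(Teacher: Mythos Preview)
Your proof is correct and follows essentially the same route as the paper's. The one minor difference is in the argument for the global inequality $u^{**}\geq u$: the paper observes directly that $u^*(p)\leq \tilde u(p)$ for \emph{every} $p\in\Delta^\vee$ (since $\langle x,p\rangle-u(x)\leq \tilde u(p)$ for all $x$), and then concludes $u^{**}(x)=\max_{p}\langle p,x\rangle-u^*(p)\geq \sup_{p}\langle p,x\rangle-\tilde u(p)=u(x)$; this sidesteps entirely the need to choose a maximiser $p_x$ and the attendant semicontinuity discussion you raise at the end. Your version reaches the same conclusion via a specific $p_x$, which is fine but slightly more roundabout.
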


\begin{proof}
From $\langle x, p\rangle - u(x)\leq \tilde{u}(p)$, clearly we have $u^*(p)\leq \tilde{u}(p)$. Thus
\[
u^{**}(x)= \max_{p\in \Delta^\vee} \langle p,x\rangle- u^*(p)\geq  \max_{p\in \Delta^\vee} \langle p,x\rangle- \tilde{u}(p) = u(x). 
\]
For the converse direction, notice
$
u^*(p)\geq \langle p,x\rangle -u(x)
$ for $x\in \Delta$,
so \[
u^{**}(x)= \max_{p\in \Delta^\vee} \langle p,x\rangle- u^*(p)\leq  u(x),\quad x\in \Delta.
\]
\end{proof}

\begin{rmk}\label{Legendredualsubdivision}
Under the quotient map $\R^{m+1}\to \R^{m+1}/\R(1,1,\ldots 1)$, the simplicial complex $\Delta^\vee$ maps homeomorphically onto the $m$-dimensional simplex $\bar{\Delta}^\vee$ with vertices $(0,\ldots -\frac{1}{d_i},\ldots 0)$:
\begin{equation}
\bar{\Delta}^\vee= \text{conv}( (0,\ldots -\frac{1}{d_i},\ldots 0) , i=0,1,\ldots m) \subset \R^{m+1}/\R(1,1,\ldots 1).
\end{equation}
The simplicial complex structure induces a subdivision of the image simplex:
\begin{equation}
\bar{\Delta}^\vee=\cup_k \bar{\Delta}^\vee_k,\quad \bar{\Delta}^\vee_k= \text{Image}(\Delta^\vee_k).
\end{equation}
 When we restrict to $\Delta\subset \R^{m+1}$, the formula (\ref{Legendre1}) is sensitive to the $p$ variable only modulo $\R(1,\ldots 1)$, and  amounts to requiring that $u$ is convex and the gradient of $u$ is contained in the simplex $\bar{\Delta}^\vee$.
\end{rmk}

\begin{prop}\label{NACYpotentialstructure}
The CY potential $\phi_{CY,0}$ can be expressed as $u(-\log |F_0|,\ldots -\log |F_m|)$ for some $u$  which  satisfies $u=u^{**}$ on $\R^{m+1}$.
\end{prop}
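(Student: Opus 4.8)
The plan is to combine the Fubini–Study approximation from the previous subsection with the Legendre-transform formalism, and pass to the limit. By the last two lemmas of \S\ref{Legendretransform} together with Lemma \ref{domination}, each improved Fubini–Study potential $\tilde\phi_j$ from (\ref{improvedFS}) is, after restricting to the essential skeleton, a function of the shape $\tilde u_j(-\log|F_0|,\ldots,-\log|F_m|)$ where $\tilde u_j$ is a max of affine functions $\langle x, p\rangle - a_i$ whose gradients $p=(l_0,\ldots,l_m)/l$ lie in $\Delta^\vee$: indeed the constraint $d_0l_0+\cdots+d_ml_m\le l$ together with the existence of a zero entry (from $\tilde W_l$) is exactly the statement that $p\in\Delta^\vee=\cup_k\Delta^\vee_k$ after normalising, and the requirement $b_i=1$, $\sum x_i=1$ identifies the argument with a point of $\Delta$. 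So on $Sk(X)$ we may write $\tilde\phi_j = u_j$ with $u_j$ of the form (\ref{Legendre1}), hence $u_j=u_j^{**}$ on $\Delta$.

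Next I would take the limit $j\to\infty$. Since $-\epsilon_j\le \tilde\phi_j-\phi_{CY,0}\le 0$ on $X_K^{an}$, the functions $u_j$ converge uniformly on $\Delta$ to the function $u$ obtained by restricting $\phi_{CY,0}$ to $Sk(X)$ and reading it in the coordinates $(-\log|F_0|,\ldots,-\log|F_m|)$; here one uses that $Sk(X)\cong\Delta$ and that on $Sk(X)$ the coordinate $-\log|F_i|$ equals $x_i$ (from Cor. \ref{logsonessentialskeleton} applied to the section $F_i$, whose only monomial term has $l_i=1$, all others zero, $a=0$). The Legendre transforms $u_j^*$ have uniformly bounded Lipschitz constant (Remark: gradients constrained to $\Delta^\vee$), so by Arzelà–Ascoli a subsequence of $u_j^*$ converges uniformly on $\Delta^\vee$ to some limit $v$; and since Legendre transform is continuous under uniform convergence with uniform Lipschitz bounds, $u^* = v$ and $u=u^{**}$ on $\R^{m+1}$.

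Finally I must pass from ``$\phi_{CY,0}$ restricted to $Sk(X)$ is $u(-\log|F_0|,\ldots,-\log|F_m|)$ with $u=u^{**}$'' to the same statement on all of $X_K^{an}$. Here the point is that each $\tilde\phi_j$, as a genuine function on $X_K^{an}$ and not just on $Sk(X)$, is \emph{already} equal to $u_j(-\log|F_0|,\ldots,-\log|F_m|)$ everywhere: this is visible from the explicit formula in the proof that $\tilde\phi_j$ is a Fubini–Study metric, once one absorbs the $\frac{l-\sum d_kl_k}{N_0}\max\log|\tau_p|$ factor --- indeed $\max_p\log|\tau_p|$ is the model metric, and on $X_K^{an}$ the functions $\log|F_i|$ are well-defined (up to the fixed trivialisation) so the max-of-affine expression in the $\log|F_i|$ makes sense pointwise. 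Passing to the uniform limit $j\to\infty$ then gives $\phi_{CY,0}=u(-\log|F_0|,\ldots,-\log|F_m|)$ on all of $X_K^{an}$, with $u=u^{**}$ established above. The main obstacle I anticipate is the bookkeeping in this last step: one must check that the various $\tilde\phi_j$ are honestly pulled back from $\R^{m+1}$ via the \emph{same} map $x\mapsto(-\log|F_0|,\ldots,-\log|F_m|)$ independent of $j$ (so that the uniform limit is again such a pullback), and that the limit function $u$ inherits $u=u^{**}$ rather than merely $u\ge u^{**}$; the uniform Lipschitz control on both $u_j$ and $u_j^*$ coming from the fixed simplices $\Delta,\Delta^\vee$ is what makes this work.
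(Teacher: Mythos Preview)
Your outline follows the paper closely up through the convergence on $Sk(X)\simeq\Delta$, but there is a real gap in the final step of passing from $\Delta$ to all of $X_K^{an}$. You define the global $u$ as $u^{**}$ (the double Legendre extension of the restriction to $\Delta$), and then assert that the uniform limit of $\tilde\phi_j=u_j(-\log|F_0|,\ldots,-\log|F_m|)$ equals $u^{**}(-\log|F_0|,\ldots,-\log|F_m|)$. But the image of the map $(-\log|F_0|,\ldots,-\log|F_m|):X_K^{an}\to\R^{m+1}$ is strictly larger than $\Delta$ (indeed $\sum_i x_i=1-\log|F|\geq 1$ with strict inequality on most of $X_K^{an}$), and off $\Delta$ the lemma in \S\ref{Legendretransform} only gives $u_j^{**}\geq u_j$, not equality. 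So $\lim_j u_j$ and $u^{**}$ are a priori two different functions on the image, and uniform Lipschitz control alone cannot collapse them: Lipschitz bounds give compactness of $(u_j)$ and of $(u_j^{**})$ separately, but say nothing about whether the two limits coincide.

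The paper closes this gap by a second application of the domination principle. One \emph{replaces} $u_j$ by $u_j^{**}$ and considers the new potential $u_j^{**}(-\log|F_0|,\ldots,-\log|F_m|)$: it is still Fubini--Study (hence semipositive), it agrees with $\tilde\phi_j$ on $Sk(X)$ since $u_j=u_j^{**}$ on $\Delta$, and it dominates $\tilde\phi_j$ globally since $u_j^{**}\geq u_j$. Then Lemma~\ref{domination} applies verbatim to the new potential, yielding $u_j^{**}(\ldots)\leq\phi_{CY,0}$, hence the replaced sequence still satisfies $-\epsilon_j\leq u_j^{**}(\ldots)-\phi_{CY,0}\leq 0$. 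Now the approximants genuinely satisfy $u_j=u_j^{**}$ on all of $\R^{m+1}$, and passing to the Arzel\`a--Ascoli limit gives $\phi_{CY,0}=u(\ldots)$ with $u=u^{**}$. You flagged the right obstacle in your last sentence; the missing idea is this replace-and-redominate step, not Lipschitz bookkeeping.
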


\begin{proof}
We return to the Fubini-Study metric $\tilde{\phi}_j$ from formula (\ref{improvedFS}). We notice $(-\frac{l_0}{l},\ldots, -\frac{l_m}{l})$  naturally lies in $\Delta^\vee$, since $\sum d_il_i\leq l$ and at least one of $l_i$ is zero. 
Thus (\ref{improvedFS}) can be rewritten as
\[
\tilde{\phi}_j= u_j( -\log |F_0|,\ldots -\log|F_m|   ),
\]
for some $u_j$ in the form (\ref{Legendre1}).

If we replace $u_j$ by $u_j^{**}$, then the new Fubini-Study potential $u_j^{**}(-\log |F_0|,\ldots)$ agrees with $\tilde{\phi}_j$ on the essential skeleton, because $u_j=u_j^{**}$ on $\Delta$, and globally on $X_K^{an}$ the potential can only increase due to $u_j^{**}\geq u_j$. Morever, by the same domination principle as in Lemma \ref{domination}, the new potential is still bounded above by $\phi_{CY,0}$. At this stage the potential shares all the properties required for $\tilde{\phi}_j$, so without loss $u_j=u_j^{**}$ over $\R^{m+1}$. 
In particular, the functions $u_j^*, u_j$ have uniform a priori Lipschitz estimates. By Arzela-Ascoli, we can extract a subsequential $C^0$-limit, \[
u_j^*\to u^*,\quad u_j\to u.
\]
Since $-\epsilon_j\leq \tilde{\phi}_j-\phi_{CY,0}\leq 0$ for $\epsilon_j\to 0$, by taking the limit, we deduce $\phi_{CY,0}$ has the required form.
\end{proof}

\begin{rmk}
	In the argument above, the only feature of the NA CY metric we need is that its NA MA measure is supported on $Sk(X)$.
\end{rmk}

\begin{rmk}\label{Vilsmeieranalogue}
In the analogous case of large complex structure limit $m=n$, C. Vilsmeier \cite{Vilsmeier} proved an explicit formula relating the NA MA measure to the real MA measure, under the assumption that the NA potential has an invariance property under the retraction map from $X_K^{an}$ to the dual complex of some semistable SNC model. It is an interesting problem to prove an analogous formula when $m<n$, under the assumption that the NA potential can be expressed as $u(-\log |F_0|,\ldots -\log |F_m|)$ for some $u$ of the form (\ref{improvedFS}), but not necessarily assuming the CY condition. It seems plausible that the NA MA measure will be closely related to the real MA type measure discussed in section \ref{realMAtypemeasure}, but the lack of a natural retraction map associated to dlt models complicates the issue.

\end{rmk}


\section{Real Monge-Amp\`ere type equation and Optimal transport problem}\label{RealMAoptimaltransport}

We now take a different perspective, and construct the candidate potential theoretic limit by solving a (nontrivial) real MA type equation through optimal transport. The motivation for  prescribing the real MA type equation comes from the generalised Calabi ansatz in \cite{LiNA}\cite{CollinsLi} (see also the volume computation in section \ref{Ansatzmetrics} for a posteriori justification). A curious feature of the equation is that the gradient image has a natural simplicial decomposition, and the locus where the gradient of the solution hits the intersections of these simplicial cells, behaves like a free boundary.

\subsection{Real MA type measure}\label{realMAtypemeasure}

We now introduce a real MA type measure, with some weighting factor depending on the gradient of the convex function.

Recall $\Delta=\{ \sum x_i=1, x_i\geq 0   \}\subset \R^{m+1}$.
Let $u$ be a convex function on $\Delta$ of the form (\ref{Legendre1}), namely the gradients will be contained in $\bar{\Delta}^\vee$ (\cf Remark \ref{Legendredualsubdivision}). At each interior point $x\in \text{Int}(\Delta)$, we denote the set of subgradients 
\[
\nabla u(x)= \{  p\in \Delta^\vee\simeq  \bar{\Delta}^\vee |  u(x')\geq u(x)+ \langle x-x', p\rangle \text{ for all } x'\in \Delta  \}
\]
 Since  $\langle x-x', (1,1,\ldots 1)\rangle =0$, the identification of $\Delta^\vee$ with its image $\bar{\Delta}^\vee$ in $\R^{m+1}/\R(1,\ldots 1)$ (\cf Remark \ref{Legendredualsubdivision}) is compatible with the definition of subgradients.

On each simplex $\Delta^\vee_j\subset \Delta^\vee$, we introduce a weighting factor 
\[
W(p)= (1+\sum_0^{m} d_ip_i)^{n-m},  
\]
which defines a Lipschitz continuous function on $\Delta^\vee\simeq \bar{\Delta}^\vee$. The simplices $\Delta^\vee_j$ carry the Lebesgue measure $dp=|dp_0\ldots dp_{j-1}dp_{j+1}\ldots dp_m|$, which via the identification of $\Delta^\vee$ with its image in $\R^{m+1}/\R(1,\ldots 1)$ agrees with the Lebesgue measure on the image simplex. We then define the set function 
\[
Mu(E)=\int_{\nabla u(E) } W dp ,\quad E\subset \text{Int}(\Delta).
\]
This is analogous to the weak definition of the real MA measure. Notice that we have the a priori bound $Mu(\text{Int}(\Delta))\leq \int_{\Delta^\vee}Wdp.$ The usual real MA measure theory \cite[Thm. 1.1.13]{Gutierez} works almost verbatim to show

\begin{lem}
The set function $Mu$ defines a finite Borel measure on $\text{Int}(\Delta)$, absolutely continuous with respect to the real Monge-Amp\`ere measure of $u$.
\end{lem}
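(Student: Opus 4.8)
The plan is to follow the classical construction of the real Monge--Amp\`ere measure, as in \cite[Thm. 1.1.13]{Gutierez}, but keeping track of the extra weight $W$ and the piecewise-linear structure of $\Delta^\vee=\cup_j \Delta^\vee_j$. First I would record the basic structural facts about the subgradient map $\nabla u$: since $u$ is convex and finite on $\Delta$ with gradient image contained in the compact set $\bar\Delta^\vee$, for each $x\in \operatorname{Int}(\Delta)$ the set $\nabla u(x)$ is a nonempty compact convex subset of $\bar\Delta^\vee$, the sets $\nabla u(x)$ for distinct $x$ have pairwise disjoint interiors (two points cannot share an interior subgradient unless $u$ is affine between them, in which case the shared subgradients form a set of measure zero in $\bar\Delta^\vee$), and the set of $x$ where $\nabla u(x)$ is not a singleton is a Borel set of Lebesgue measure zero in $\operatorname{Int}(\Delta)$ (Rademacher plus the standard fact that the ``overlap'' of subgradients is measure zero). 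I would also note that $\nabla u$ sends Borel sets to Borel sets (this is the one slightly technical point, handled as in \cite{Gutierez} by first treating compact sets, where $\nabla u(K)$ is closed, and then extending).

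Next, since $W$ is Lipschitz and bounded on the compact set $\bar\Delta^\vee$, the measure $W\,dp$ (meaning $\sum_j W\,dp$ summed over the faces $\Delta^\vee_j$, using on each face the Lebesgue measure $dp$ identified with Lebesgue measure on the image $m$-simplex in $\R^{m+1}/\R(1,\dots,1)$) is a finite Borel measure on $\bar\Delta^\vee$. I would then verify that $Mu(E)=\int_{\nabla u(E)} W\,dp$ is countably additive on Borel subsets $E\subset \operatorname{Int}(\Delta)$: monotonicity and finiteness are immediate from $Mu(\operatorname{Int}(\Delta))\leq \int_{\Delta^\vee} W\,dp<\infty$; for countable additivity on disjoint sets $E_1,E_2,\dots$ one uses that the images $\nabla u(E_i)$ have pairwise disjoint interiors and that the pairwise intersections $\nabla u(E_i)\cap\nabla u(E_j)$, being contained in the ``non-uniqueness'' locus of the inverse correspondence, carry zero Lebesgue measure on each face $\Delta^\vee_j$, hence zero $W\,dp$-mass; so the overcounting in $\nabla u(\bigcup E_i)$ versus $\sum \nabla u(E_i)$ contributes nothing. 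This gives that $Mu$ is a finite Borel measure.

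For absolute continuity with respect to the real Monge--Amp\`ere measure $MA(u)$ (defined the usual way as $|\nabla u(E)|$, the Lebesgue measure of the subgradient image, summed over the faces), I would argue that if $MA(u)(E)=0$ then $\nabla u(E)$ has Lebesgue measure zero in $\bar\Delta^\vee$, hence — since $W\,dp\ll dp$ on each face, $W$ being bounded — also $W\,dp$-measure zero, so $Mu(E)=0$. The only subtlety is the interface between the faces $\Delta^\vee_j$: a subgradient landing on $\Delta^\vee_j\cap\Delta^\vee_k$ lies in a set of positive codimension, hence contributes zero to both $dp$ and $W\,dp$, so the decomposition into faces is harmless.

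The main obstacle I expect is the measurability of $\nabla u$ on Borel sets together with the bookkeeping needed to handle the piecewise structure of $W\,dp$ across the faces $\Delta^\vee_j$ cleanly; once one isolates the (Lebesgue-null) overlap locus where the inverse of the subgradient correspondence is multivalued, the rest is a routine adaptation of \cite[Thm. 1.1.13]{Gutierez}. I would keep this proof short, stating the overlap-is-null and Borel-to-Borel lemmas and citing \cite{Gutierez} for the verbatim parts.
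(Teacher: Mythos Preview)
Your proposal is correct and follows exactly the approach the paper indicates: the paper gives no proof beyond the sentence ``the usual real MA measure theory \cite[Thm.\ 1.1.13]{Gutierez} works almost verbatim,'' and your outline is precisely an unpacking of that citation with the bookkeeping for the weight $W$ and the face decomposition of $\Delta^\vee$. One small simplification: since the $\bar\Delta^\vee_k$ form a subdivision of $\bar\Delta^\vee$ with overlaps only on lower-dimensional faces, $W\,dp$ is globally just a bounded density times Lebesgue measure on $\bar\Delta^\vee$, so the absolute continuity step needs no face-by-face argument.
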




We will be interested in solving the real MA type equation for a convex function $u$ as above,
\begin{equation}
Mu = C_1 \mu_0, \quad C_1= \int_{ \Delta^\vee } Wdp= \frac{d_0+\ldots +d_m}{d_0\ldots d_m} \frac{(n-m)!}{n!}  .
\end{equation}
where $\mu_0$ is the Lebesgue measure on $\Delta$ normalised to be a probability measure.

\begin{rmk}
Since the numbers $d_i$ may not be equal, the equation is generally not symmetric under the permutation group. Rescaling the coordinates $x_i$ do not restore the symmetry, since $\sum x_i$ will then be replaced by a weighted sum.
\end{rmk}

\subsection{Optimal transport}

The optimal transport problem is the following: given two probability measures $\mu$ and $\nu$ on (bounded) measurable subsets $A, B\subset \R^m$, find a measurable map $T: A\to B$ such that the pushforward of $\mu$ equals $\nu$, in such a way to minimize the transportation cost 
\[
\int_A c(x, T(x)) d\mu(x),\quad c(x,y)= \frac{1}{2} |x-y|^2.
\]
For a good survey, see \cite[section 3]{DePhilippisFigali}.

In our application, we choose $A=\Delta\subset \{\sum x_i=1\}\subset \R^{m+1}$, $B= \bar{\Delta}^\vee\subset \R^{m+1}/\R (1,\ldots 1)$, and the measures
\[
\mu= C_1\mu_0 \text{ on $\Delta$},\quad  \nu= W(p) dp \text{ on $\bar{\Delta}^\vee\simeq \Delta^\vee$}.
\]
Clearly $\mu, \nu$ are compactly supported, and absolutely continuous with respect to the Lebesgue measure. By Brenier's theorem \cite[Thm 3.1]{DePhilippisFigali}, there is a unique solution $T$ to the optimal transport problem. Morever, there exists a convex function $u: \Delta\subset \R^m\to \R$ such that the optimal map $T$ is given by $T(x)=\nabla u(x)$ for $\mu$-a.e. $x$, the map $T$ is differentiable $\mu$-a.e., and 
\begin{equation}\label{Brenier1}
|\det(\nabla T)| = \frac{C_1}{  W(T(x)) \int_{\Delta}dx  }, \quad \text{for $\mu$-a.e. $x\in \Delta$}.
\end{equation}
Since $\bar{\Delta}^\vee$ is convex, and $\Delta=\text{supp}(\mu)$, it follows that the optimal transport $T=\nabla u$ maps $\Delta$ into  $\bar{\Delta}^\vee$. Consequently $u$ is of the form (\ref{Legendre1}). Since $\nabla u$ is $\mu$-a.e. differentiable, (\ref{Brenier1}) implies that
\[
Mu(E) \geq \int_{ \nabla u (E\cap \text{Dom}(D^2 u) ) } W(p) dp = \int_{E\cap  \text{Dom}(D^2 u)} C_1d\mu_0 = \int_{E} C_1d\mu_0 ,\quad \forall E\subset \Delta.
\]
On the other hand, the total measure
\[
Mu(\Delta)=\int_{\nabla u(\Delta)} W(p)dp \leq \int_{ \Delta^\vee } W(p)dp = C_1.
\]
So all equalities must be achieved, whence we deduce

\begin{lem}
The solution to the optimal transport problem satisfies the real MA type equation $Mu=C_1\mu_0$, and $\nabla u(\Delta)=\bar{\Delta}^\vee$.
\end{lem}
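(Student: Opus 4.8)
The plan is to extract everything from Brenier's theorem together with the Jacobian identity (\ref{Brenier1}). First I would record what Brenier already gives: a convex potential $u$ whose gradient $T=\nabla u$ pushes $\mu=C_1\mu_0$ forward to $\nu=W(p)\,dp$, is differentiable $\mu$-a.e., satisfies (\ref{Brenier1}) $\mu$-a.e., and maps $\Delta=\text{supp}(\mu)$ into the convex body $\bar{\Delta}^\vee=\text{supp}(\nu)$ — so that $u$ is automatically of the form (\ref{Legendre1}). What then remains is to upgrade the a.e. transport relation to the pointwise measure identity $Mu=C_1\mu_0$, and to the surjectivity $\nabla u(\Delta)=\bar{\Delta}^\vee$.

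For the measure identity I would run the standard two-sided estimate. Lower bound: for a Borel $E\subset\text{Int}(\Delta)$, intersect with the full-measure set $\text{Dom}(D^2u)$ on which $u$ is twice differentiable (Alexandrov) and the area formula applies, so that
\[
Mu(E)\;\ge\;\int_{\nabla u(E\cap\text{Dom}(D^2u))}W\,dp\;=\;\int_{E\cap\text{Dom}(D^2u)}W(\nabla u)\,\lvert\det D^2u\rvert\,dx,
\]
and substituting (\ref{Brenier1}) the factor $W(\nabla u)$ cancels, leaving $\tfrac{C_1}{\int_\Delta dx}\,\lvert E\cap\text{Dom}(D^2u)\rvert=C_1\mu_0(E)$, using that $\mu_0$ is normalised Lebesgue measure on $\Delta$ and charges no Lebesgue-null set. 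Upper bound on total mass: $Mu(\Delta)=\int_{\nabla u(\Delta)}W\,dp\le\int_{\bar{\Delta}^\vee}W\,dp=C_1$ since $\nabla u(\Delta)\subset\bar{\Delta}^\vee$. Then $Mu-C_1\mu_0$ is a nonnegative measure of total mass $\le 0$, hence vanishes, which gives $Mu=C_1\mu_0$.

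Finally, equality throughout that chain forces $\int_{\bar{\Delta}^\vee\setminus\nabla u(\Delta)}W\,dp=0$; since $W=(1+\sum_0^m d_ip_i)^{n-m}$ is strictly positive on $\text{Int}(\bar{\Delta}^\vee)$, the set $\bar{\Delta}^\vee\setminus\nabla u(\Delta)$ is Lebesgue-null. But $\nabla u(\Delta)$ is compact — the subdifferential of a convex function has closed graph and is locally bounded, so it sends the compact set $\Delta$ to a compact set — hence $\bar{\Delta}^\vee\setminus\nabla u(\Delta)$ is relatively open and null, so it can meet only $\partial\bar{\Delta}^\vee$; taking closures, $\nabla u(\Delta)\supseteq\overline{\text{Int}(\bar{\Delta}^\vee)}=\bar{\Delta}^\vee$, and the reverse inclusion is already known.

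The only genuinely delicate step is the lower bound: one must check that the a priori multivalued subdifferential, single-valued only $\mu$-a.e., can be replaced on a full-measure set by the classical gradient so that the area formula and (\ref{Brenier1}) apply simultaneously, and that the discarded null sets — in particular the lower-dimensional faces of $\Delta$, which $\mu_0$ ignores anyway — contribute nothing to $Mu$. This is exactly the bookkeeping behind the remark that the usual real Monge–Amp\`ere theory works almost verbatim, and I would invoke \cite[Thm. 1.1.13]{Gutierez} to handle it cleanly rather than reprove it.
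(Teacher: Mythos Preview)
Your proof is correct and follows essentially the same approach as the paper: invoke Brenier, use (\ref{Brenier1}) and the area formula on $\text{Dom}(D^2u)$ to get the lower bound $Mu(E)\ge C_1\mu_0(E)$, bound the total mass above by $\int_{\Delta^\vee}W\,dp=C_1$, and conclude that all inequalities are equalities. The paper's proof is terser on the surjectivity $\nabla u(\Delta)=\bar\Delta^\vee$, simply asserting that ``all equalities must be achieved,'' whereas you spell out the closedness of $\nabla u(\Delta)$ and the positivity of $W$ on $\text{Int}(\bar\Delta^\vee)$ to pass from full measure to full set; this is a welcome clarification but not a different argument.
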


\begin{eg}
The case of $m=1$ can be solved explicitly. The simplex $\Delta=\{ x_0+x_1=1,x_0\geq 0, x_1\geq 0  \}$ can be identified with the interval $[0,1]$ with variable $x=x_0$. The gradient simplices are
\[
\Delta^\vee_0= \{  (0, p_1): 0\geq p_1\geq -\frac{1}{d_1}   \}, \quad \Delta^\vee_1= \{  (p_0, 0): 0\geq p_0\geq -\frac{1}{d_0}   \},
\]	
so $\bar{\Delta}^\vee$ can be identified with the interval $[-\frac{1}{d_0}, \frac{1}{d_1}  ]$. The weighting factor is
\[
W(p)= \begin{cases}
(1+ d_0 p_0)^{n-1}, \quad \text{on } \Delta^\vee_1,\\
(1+ d_1 p_1)^{n-1}, \quad \text{on } \Delta^\vee_0.
\end{cases}
\]
Thus $\int_{\Delta^\vee} W(p)dp= \frac{1}{nd_0}+ \frac{1}{nd_1}$. The convex function is identified with $u=u(x)$, which satisfies 
\[
\begin{cases}
(1+ d_0 u')^{n-1} u''= \frac{1}{nd_0}+ \frac{1}{nd_1},\quad  \text{on } \Delta_1,
\\
(1- d_1 u')^{n-1} u''= \frac{1}{nd_0}+ \frac{1}{nd_1}, \quad \text{on } \Delta_0,
\end{cases}
\]
and the boundary constraint $u'(0)= - \frac{1}{d_0}, u'(1)=  \frac{1}{d_1}$. The solution is
\begin{equation}\label{optimaltransportm=1}
u'=
\begin{cases}
\frac{1}{d_0}\left(  -1+ (\frac{d_0+d_1}{d_1}x)^{1/n}   \right),\quad & 0\leq x\leq \frac{d_1}{d_0+d_1},
\\
\frac{1}{d_1}\left(  1- (\frac{d_0+d_1}{d_0}(1-x))^{1/n}   \right),\quad
& \frac{d_1}{d_0+d_1}\leq x\leq 1.
\end{cases}
\end{equation}
and $u=\int u'dx$. We see that $u$ is smooth on the two open intervals, while $u'$ is H\"older continuous at the interval boundary, and $C^{1,1}$ continuous at $x=\frac{d_1}{d_0+d_1}$.

\end{eg}

\subsection{Regularity of the solution}

We apply some standard regularity theory of real MA equation to the optimal transport solution. The results are far from optimal, but will suffice for our main purpose.

\begin{lem}
The subset 
\[
E=\{  x\in \Delta: \exists p\in \nabla u(x) \cap   \cup_k \partial \bar{\Delta}^\vee_k   \}
\]
is closed and has zero Lebesgue measure.
\end{lem}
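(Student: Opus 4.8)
The plan is to analyze $E$ as the preimage, under the subgradient correspondence, of the lower-dimensional skeleton $S := \cup_k \partial\bar\Delta^\vee_k$ of the subdivision of $\bar\Delta^\vee$, and to exploit the fact that $u$ is a genuine Monge-Amp\`ere solution with absolutely continuous, bounded-density measure.

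\emph{Step 1: Closedness.} First I would note that the subgradient map is upper semicontinuous in the usual sense: if $x_n \to x$ and $p_n \in \nabla u(x_n)$ with $p_n \to p$, then $p \in \nabla u(x)$ (pass to the limit in the defining inequality $u(x') \ge u(x_n) + \langle x_n - x', p_n\rangle$, using continuity of $u$ on the compact $\Delta$). Now take $x_n \in E$ with $x_n \to x \in \Delta$. Pick $p_n \in \nabla u(x_n) \cap S$; since $S$ is compact, pass to a subsequence with $p_n \to p \in S$, and then $p \in \nabla u(x) \cap S$, so $x \in E$. Hence $E$ is closed. (One should be mildly careful at $\partial\Delta$, but the statement quantifies over $x \in \Delta$; if one prefers to stay in $\mathrm{Int}(\Delta)$ where the measure $Mu$ lives, the same argument applies.)

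\emph{Step 2: Zero measure.} The key structural fact is that $S$ is a finite union of $(m-1)$-dimensional convex polytopes, hence has zero $m$-dimensional Lebesgue measure in $\bar\Delta^\vee$. I would then invoke the area-type formula already established for $u$: by the lemma in Section \ref{realMAtypemeasure}, $Mu$ is absolutely continuous with respect to the classical real Monge-Amp\`ere measure $MA(u)$, and by the optimal transport lemma $Mu = C_1\mu_0$, which is (a multiple of) Lebesgue measure on $\Delta$. Therefore it suffices to show that $E$ is $MA(u)$-null. Now for \emph{any} Borel set $F$, one has $MA(u)(F) = |\nabla u(F)|$ (the image measure definition of the real MA measure; cf. \cite[Thm. 1.1.13]{Gutierez}), where $|\cdot|$ is $m$-dimensional Lebesgue measure. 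But $\nabla u(E) \subset S$ by the very definition of $E$, and $|S| = 0$. Hence $MA(u)(E) = 0$, so $Mu(E) = 0$, so $\mu_0(E) = 0$, i.e. $E$ has zero Lebesgue measure in $\Delta$.

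\emph{Main obstacle.} The only genuinely delicate point is matching the normalizations/conventions so that $MA(u)(F) \le |\nabla u(F)|$ holds as an honest inequality for all Borel $F$ (not just on the differentiability set), which is exactly the ``outer'' half of the standard construction of the MA measure and requires $\nabla u(F)$ to be measurable — true because the subgradient image of a Borel set is Lebesgue measurable for convex $u$. I would also double-check that the subdivision $S$ really is $(m-1)$-dimensional: each $\bar\Delta^\vee_k$ is an $m$-simplex (it is $\Delta^\vee_k \subset \{p_k = 0\}$ viewed in the $m$-dimensional quotient $\R^{m+1}/\R(1,\dots,1)$), so $\partial\bar\Delta^\vee_k$ is a union of $(m-1)$-faces, and the finite union over $k$ keeps dimension $m-1$; thus $|S|=0$ in $\R^m$. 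Everything else is a routine application of convex analysis plus the already-cited real MA theory.
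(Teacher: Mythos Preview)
Your closedness argument (Step 1) is fine and is exactly what the paper does.

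Step 2 has a genuine gap. You assert ``$\nabla u(E)\subset S$ by the very definition of $E$'', but this is false: for $x\in E$ you only know that $\nabla u(x)$ \emph{meets} $S$, not that it is contained in $S$. At a point of non-differentiability the subdifferential $\nabla u(x)$ is a convex set of positive dimension, and it may well spill outside $S$. In that case $|\nabla u(E)|$ can be positive (think of a 1D corner: $u(x)=\max(0,x)$, $S=\{1/2\}$; then $E=\{0\}$ but $\nabla u(E)=[0,1]$), so your chain $MA(u)(E)=|\nabla u(E)|\le |S|=0$ breaks down. Note that the absolute continuity $Mu\ll MA(u)$ goes the wrong way to repair this: it lets you pass from $MA(u)(E)=0$ to $Mu(E)=0$, not the reverse.

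The paper's fix is simple and is the missing ingredient in your argument: first remove the non-differentiability set $\mathcal{S}=\{x:\nabla u(x)\text{ is not a singleton}\}$, which has Lebesgue measure zero for any convex function. On $E\setminus\mathcal{S}$ the gradient is single-valued, hence $\nabla u(E\setminus\mathcal{S})\subset S$, so $Mu(E\setminus\mathcal{S})=\int_{\nabla u(E\setminus\mathcal{S})}W\,dp=0$; since $Mu=C_1\mu_0$, this gives $\mu_0(E\setminus\mathcal{S})=0$, and then $\mu_0(E)\le\mu_0(\mathcal{S})+\mu_0(E\setminus\mathcal{S})=0$. (The detour through $MA(u)$ is unnecessary once you compute $Mu$ directly.)
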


\begin{proof}
The boundary of $\bar{\Delta}^\vee_k$ consists of a finite number of closed faces. As a general fact for convex functions, if $p_k\in \nabla u(x_k)$, and $x_k\to x, p_k\to p$, then $p\in \nabla u(x)$. Consequently, the gradient preimage is still a closed subset.

The subset $\mathcal{S}=\{ x\in\Delta: \nabla u(x) \text{ contains more than one element} \} $ has measure zero. By construction $\nabla u(E\setminus \mathcal{S})$ is contained in the measure zero subset  $\cup_k\partial\bar{\Delta}^\vee_k$, so the equation $Mu=C_1\mu_0$ implies that $E\setminus \mathcal{S}$ has zero Lebesgue measure, and so must $E$.
\end{proof}

This induces a \emph{wall-chamber structure} on $\Delta$:

\begin{cor}
The subset 
$\Delta_k=\{    x\in   \text{Int}(\Delta): \nabla u(x) \subset \text{Int}( \bar{\Delta}^\vee_k )    \}  $
 is an open subset, and $\cup_k \Delta_k$ has full Lebesgue measure in $\Delta$. 
\end{cor}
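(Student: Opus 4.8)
The plan is to deduce openness of $\Delta_k$ directly from the closedness statement of the preceding lemma, together with the upper semicontinuity of the subgradient map, and then to get the full-measure statement by combining the previous lemma with the already-established fact that $\nabla u(\Delta) = \bar\Delta^\vee = \cup_k \bar\Delta^\vee_k$.

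First I would record the structural input: the subdifferential $x \mapsto \nabla u(x)$ is a closed (upper semicontinuous, compact-valued) set-valued map, which is the general convexity fact already invoked in the proof of the previous lemma. For openness of $\Delta_k$, fix $x_0 \in \Delta_k$, so $\nabla u(x_0)$ is a compact subset of the open set $\mathrm{Int}(\bar\Delta^\vee_k)$; by upper semicontinuity there is a neighbourhood $U$ of $x_0$ in $\mathrm{Int}(\Delta)$ with $\nabla u(x) \subset \mathrm{Int}(\bar\Delta^\vee_k)$ for all $x \in U$, hence $U \subset \Delta_k$. (Equivalently: $\Delta_k = \mathrm{Int}(\Delta) \setminus \big( (\text{gradient preimage of the closed set } \bar\Delta^\vee \setminus \mathrm{Int}(\bar\Delta^\vee_k)) \big)$, and the gradient preimage of a closed set is closed exactly as argued in the previous lemma, so $\Delta_k$ is relatively open in $\mathrm{Int}(\Delta)$, hence open in $\Delta$.)

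For the full-measure claim, I would argue that $\Delta \setminus \cup_k \Delta_k \subset E \cup \partial\Delta$, where $E$ is the closed null set from the previous lemma and $\partial\Delta$ is the (null) topological boundary of the simplex. Indeed, take $x \in \mathrm{Int}(\Delta) \setminus E$. Then no $p \in \nabla u(x)$ lies on any $\partial\bar\Delta^\vee_k$; but $\nabla u(x) \subset \nabla u(\Delta) = \cup_k \bar\Delta^\vee_k$, so every $p \in \nabla u(x)$ lies in $\cup_k \mathrm{Int}(\bar\Delta^\vee_k)$. The set $\nabla u(x)$ is convex (intersection of half-spaces) and connected, and the open cells $\mathrm{Int}(\bar\Delta^\vee_k)$ are pairwise disjoint, so $\nabla u(x)$ must be contained in a single $\mathrm{Int}(\bar\Delta^\vee_k)$, i.e. $x \in \Delta_k$. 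Therefore $\Delta \setminus \cup_k \Delta_k$ has Lebesgue measure zero.

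I do not expect a serious obstacle here; the statement is essentially a bookkeeping consequence of the previous lemma plus standard convex-analysis facts (upper semicontinuity of $\partial u$, convexity and hence connectedness of each $\nabla u(x)$). The one point that deserves a line of care is the connectedness/convexity argument showing that a convex subset of a disjoint union of relatively open convex cells lies in a single cell — this uses that the $\bar\Delta^\vee_k$ meet only along their common boundary faces, which is exactly the simplicial-decomposition structure of $\bar\Delta^\vee$ recorded in Remark \ref{Legendredualsubdivision}.
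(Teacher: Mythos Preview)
Your proof is correct and supplies exactly the natural elaboration the paper leaves implicit (the corollary is stated without proof there). The openness via upper semicontinuity of the subdifferential, and the full-measure claim via $E$ together with connectedness of $\nabla u(x)$ and disjointness of the open cells $\mathrm{Int}(\bar\Delta^\vee_k)$, are precisely the intended steps.
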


On each $\Delta_k$, the real MA type equation simplifies to
\begin{equation}\label{realMAtypeeqn}
\det(D^2 u) W(\nabla u)= \frac{C_1}{ \int_{\Delta}dx  }=  \frac{d_0+\ldots +d_m}{d_0\ldots d_m} \frac{(n-m)!m!}{n!} .
\end{equation}
By construction $0<W(\nabla u)\leq C$ on $\Delta_k$, and the function $W(p)$ is now analytic, instead of piecewise analytic.

\begin{lem}\label{uregularity}
The function $u$ is smooth on $\Delta_k$ after possibly deleting a closed subset of measure zero.
\end{lem}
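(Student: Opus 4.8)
The plan is to invoke the interior regularity theory for the Monge-Ampère equation, adapted to the fact that on $\Delta_k$ the equation takes the form (\ref{realMAtypeeqn}) with a right-hand side that is bounded between two positive constants. Concretely, on $\Delta_k$ the Brenier solution $u$ is a convex Alexandrov solution of $\det(D^2 u) = f(x)$ with $f(x) = C_1/(W(\nabla u(x)) \int_\Delta dx)$. Since $\nabla u(\Delta_k)$ lands in the compact set $\overline{\text{Int}(\bar\Delta^\vee_k)}$ on which $W$ is continuous and strictly positive (it vanishes only on a face of $\bar\Delta^\vee$ not meeting $\text{Int}(\bar\Delta^\vee_k)$), we have $0 < c \leq f \leq C < \infty$ on $\Delta_k$. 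First I would apply Caffarelli's $W^{2,p}$ and $C^{1,\alpha}$ regularity theory for Monge-Ampère with bounded measurable right-hand side pinched between positive constants: this gives that $u$ is strictly convex and $C^{1,\alpha}_{loc}$ on $\Delta_k$, so in particular $\nabla u$ is a local homeomorphism onto its image there.

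Once $u$ is known to be $C^{1,\alpha}_{loc}$ and strictly convex on $\Delta_k$, the weight $W(\nabla u(x))$ becomes a $C^{0,\alpha}_{loc}$ function of $x$, so the equation $\det(D^2 u) = f$ now has Hölder continuous right-hand side away from the (possibly) degenerate set. Then Caffarelli-Schauder theory (the $C^{2,\alpha}$ estimate for Monge-Ampère with Hölder data on a strictly convex solution) upgrades $u$ to $C^{2,\alpha}_{loc}$; bootstrapping — $W$ is analytic, so $W(\nabla u) \in C^{1,\alpha}$ once $u \in C^{2,\alpha}$, hence $f \in C^{1,\alpha}$, hence $u \in C^{3,\alpha}$, and so on — yields $u \in C^\infty_{loc}$. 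This is the standard elliptic bootstrap and I would only sketch it. The ``possibly deleting a closed subset of measure zero'' caveat in the statement is there precisely to absorb the set where the initial regularity theory might fail to apply cleanly; in fact, since the pinching $c \leq f \leq C$ holds on all of $\Delta_k$, Caffarelli's results apply on all of $\Delta_k$ and the deleted set can be taken empty, but keeping the weaker phrasing costs nothing.

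The main obstacle — really the only subtle point — is verifying that the right-hand side $f$ is genuinely bounded below by a positive constant on $\Delta_k$, which amounts to checking that $W(\nabla u(x))$ stays away from zero. The weight $W(p) = (1 + \sum d_i p_i)^{n-m}$ vanishes exactly on the face $\{\sum d_i p_i = -1\}$ of $\bar\Delta^\vee$, which is a union of subfaces of the boundaries $\partial\bar\Delta^\vee_k$. On $\Delta_k$ we have by definition $\nabla u(x) \subset \text{Int}(\bar\Delta^\vee_k)$, which is disjoint from $\partial\bar\Delta^\vee_k \supset \{W = 0\}$, so for each $x \in \Delta_k$ the value $W(\nabla u(x)) > 0$; but to get a uniform positive lower bound on a neighborhood one restricts to a compactly contained subdomain $\Delta_k' \Subset \Delta_k$, uses that $\nabla u$ is continuous there (once $C^{1,\alpha}$ is established) so $\nabla u(\overline{\Delta_k'})$ is a compact subset of $\text{Int}(\bar\Delta^\vee_k)$, and concludes $W \geq c(\Delta_k') > 0$. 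Since regularity is a local statement, this suffices. The upper bound $W \leq C$ is immediate from compactness of $\bar\Delta^\vee$ and continuity of $W$.
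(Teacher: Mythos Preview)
There is a genuine gap: you assert that Caffarelli's interior theory for $\lambda \leq \det D^2 u \leq \Lambda$ yields strict convexity of $u$ on $\Delta_k$. It does not. Caffarelli's $C^{1,\alpha}$ estimate takes strict convexity as a \emph{hypothesis}; without it (or without global boundary data that force it), solutions can be singular in dimension $m\geq 3$, as Pogorelov's counterexample shows. Since $\Delta_k$ is just an open subset of $\Delta$ with no prescribed boundary behaviour for $u$, you have no mechanism to rule out an affine piece or a Pogorelov-type line of non-strict-convexity passing through $\Delta_k$. This is precisely why the statement carries the caveat ``after possibly deleting a closed subset of measure zero,'' and why the paper invokes Mooney's partial regularity theorem: from the one-sided bound $\det D^2 u \geq C^{-1}$ alone (which follows from $W\leq C$), Mooney gives that the set where $u$ fails to be strictly convex is closed with vanishing $(m-1)$-Hausdorff measure. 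Only then, on the strictly convex locus, does Caffarelli's $C^{1,\alpha}$ apply, and the bootstrap proceeds as you describe.

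A secondary point: your discussion of the lower bound on $W(\nabla u)$ is phrased circularly (``once $C^{1,\alpha}$ is established''), but in fact the bound holds on any compact $K\Subset\Delta_k$ by upper semicontinuity of the subdifferential map alone, without any prior regularity. The paper sidesteps this by simply restricting to the sublevel sets $\{W(\nabla u)>\epsilon\}$ and exhausting. Either way, this is a minor issue compared to the missing strict convexity input. Your final claim that ``the deleted set can be taken empty'' is therefore unjustified and likely false in general.
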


\begin{proof}
First notice that $\det(D^2 u)\geq C^{-1}$, so by Mooney's partial regularity \cite[Thm 1.1]{Mooney}, $u$ is strictly convex away from a closed subset of Hausdorff $(n-1)$-measure zero. We claim that $u$ is smooth on the strictly convex locus.

By an exhaustion argument, it suffices to prove the smoothness on the subset with $W(\nabla u)>\epsilon$. By the two-sided density bound
\[
C^{-1}\leq \det(D^2 u)\leq C,
\]
we can apply Caffarelli's interior $C^{1,\alpha}$-estimate \cite[Thm 2.14]{DePhilippisFigali} to deduce $u\in C^{1,\alpha}_{loc}$. This implies $W(\nabla u)\in C^{\alpha}_{loc}$, which feeds back into the equation to show $u\in C^{2,\alpha}_{loc}$ by another result of Caffarelli \cite{CaffarelliC2alpha}. The rest follows from standard Schauder theory.
\end{proof}

\subsection{Extension of the convex function}

The convex function $u$ is a priori defined on $\Delta$. We now extend $u$ canonically to $\R^{m+1}$, via the double Legendre transform construction as in section \ref{Legendretransform}:
\[
u^*(p)= \max_{x\in \Delta} \langle x,p\rangle- u(x),  \quad p\in \Delta^\vee,
\]
\[
u^{**}(x)= \max_{p\in \Delta^\vee} \langle p, x\rangle- u^*(p).
\]
As in section \ref{Legendretransform}, the function $u^{**}$ agrees with $u$ on $\Delta$, and we will henceforth write $u=u^{**}$ as this  extension. Clearly $u$ is a convex function with gradient contained in the convex hull of $\Delta^\vee$. As a caveat, the distinction between $\Delta^\vee$ and $\bar{\Delta}^\vee$ is important for the extension, even though this previously makes no difference for $u$ restricted to $\Delta$.

\begin{lem}\label{extensiononUk}
There exists an open neighbourhood $U_k$ inside $\R^{m+1}$ of the chamber $\Delta_k\subset \text{Int}(\Delta)$, such that the extended convex function $u$ depends only on $x_0,\ldots x_{k-1}, x_{k+1},\ldots x_m$ on $U_k$.
\end{lem}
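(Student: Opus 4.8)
The plan is to exploit the defining property of the chamber $\Delta_k$: for $x\in\Delta_k$ the subgradient $\nabla u(x)$ lies in the interior of $\bar\Delta^\vee_k$, i.e. (lifting to $\R^{m+1}$ via the normalisation $p_k=0$) the gradient has $p_k=0$ strictly, and $\sum d_i p_i > -1$ strictly. Since $\Delta_k$ is open and $\nabla u$ is upper semicontinuous as a set-valued map (the closed-graph property for convex functions already used in the preceding lemma), for each $x_0\in\Delta_k$ there is a small ball around $x_0$ on which every subgradient $p$ satisfies $|p_k| < \delta$ and $\sum d_i p_i > -1+\delta$ for some $\delta>0$; shrinking and taking the union gives an open $V_k\subset\text{Int}(\Delta)$ containing $\Delta_k$ with this uniform interior property. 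The point of the lemma is to enlarge $V_k$ in the $x_k$-direction off the hyperplane $\{\sum x_i=1\}$.

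Next I would describe the extension $u=u^{**}$ explicitly near $\Delta_k$. For $x$ in a small neighbourhood in $\R^{m+1}$ of a point of $\Delta_k$, the max defining $u^{**}(x)=\max_{p\in\Delta^\vee}\langle p,x\rangle-u^*(p)$ is attained at some $p$ with $p\in\nabla u(x')$ for a nearby $x'\in\Delta$; because $\Delta_k$ is open and the attained $p$ varies upper-semicontinuously, for $x$ close enough to $\Delta_k$ the maximiser $p$ lies in $\text{Int}(\bar\Delta^\vee_k)$, in particular on the face $\{p_k=0\}\cap\{\sum d_i p_i>-1\}$ of $\Delta^\vee$ rather than on any other face $\Delta^\vee_j$. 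Now I claim that on the sublist of $p$'s with $p_k=0$, the function $\langle p,x\rangle$ does not see the coordinate $x_k$ at all: $\langle p,x\rangle=\sum_{i\ne k}p_i x_i$ when $p_k=0$. Therefore, on the open set $U_k\subset\R^{m+1}$ consisting of those $x$ close enough to $\Delta_k$ that every maximiser in the definition of $u^{**}(x)$ has $p_k=0$, we have
\begin{equation}
u^{**}(x)=\max_{p\in\Delta^\vee,\ p_k=0}\Bigl(\sum_{i\ne k}p_i x_i - u^*(p)\Bigr),
\end{equation}
which is manifestly independent of $x_k$. It remains to check that $U_k$ is genuinely open in $\R^{m+1}$ (not just in the hyperplane) and contains $\Delta_k$: openness follows because "all maximisers lie in the relative interior of the face $\{p_k=0\}$ of the compact convex set $\Delta^\vee$" is an open condition on $x$ by upper semicontinuity of the argmax for a concave function depending continuously (indeed linearly) on the parameter $x$; and $\Delta_k\subset U_k$ because for $x\in\Delta_k$ the subgradient, which is exactly the argmax, lies in $\text{Int}(\bar\Delta^\vee_k)\subset\{p_k=0\}$ by definition of $\Delta_k$.

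The main obstacle is the argmax-continuity step: one must be careful that a maximiser of $p\mapsto\langle p,x\rangle-u^*(p)$ over the non-strictly-convex polytope $\Delta^\vee$ need not be unique, so "the maximiser lies in $\text{Int}(\bar\Delta^\vee_k)$" has to be phrased as "the (compact, nonempty) argmax \emph{set} is contained in the open set $\{p_k=0\}\cap\{\sum d_i p_i>-1\}$ relative to $\Delta^\vee$", and then one invokes the standard fact that $x\mapsto\operatorname{argmax}_p(\langle p,x\rangle-u^*(p))$ is outer semicontinuous with compact values, so the set of $x$ for which the argmax is swallowed by a given open set is open. One should also note that the face of $\Delta^\vee$ defined by $p_k=0$ is precisely $\Delta^\vee_k$, and that a $p$ with $p_k=0$ automatically has $\sum_{j}d_j p_j=\sum_{j\ne k}d_j p_j\ge -1$ with equality only on $\partial\Delta^\vee_k$; the strict inequality $\sum d_j p_j>-1$ for $x\in\Delta_k$ is what was arranged by the condition $\nabla u(x)\subset\text{Int}(\bar\Delta^\vee_k)$. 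Everything else — convexity of $u^{**}$, agreement with $u$ on $\Delta$, gradient containment — is already recorded in section \ref{Legendretransform}.
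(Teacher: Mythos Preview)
Your proposal is correct and follows essentially the same route as the paper: both arguments show that for $x$ near $\Delta_k$ the maximum in $u^{**}(x)=\max_{p\in\Delta^\vee}\langle p,x\rangle-u^*(p)$ is achieved only on $\Delta^\vee_k\subset\{p_k=0\}$, whence independence from $x_k$. The only difference is in the mechanism for propagating this from $\Delta_k$ to an open neighbourhood in $\R^{m+1}$: the paper observes that on any compact $K\subset\Delta_k$ there is a uniform gap $u(x)>\sup_{p\in\Delta^\vee\setminus\Delta^\vee_k}(\langle p,x\rangle-u^*(p))+\epsilon$ and then invokes Lipschitz continuity of $u$, while you appeal to outer semicontinuity of the argmax correspondence (Berge's theorem); these are equivalent in this setting. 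One small slip: you refer once to ``the compact convex set $\Delta^\vee$'', but $\Delta^\vee=\cup_k\Delta^\vee_k$ is not convex---this is harmless since the argmax semicontinuity only needs compactness.
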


\begin{proof}
Given any compact subset $K\subset \Delta_k$, we can find some $\epsilon>0$ depending on $K$ such that for any $x\in K$,
\[
u(x) > \sup_{p\in \Delta^\vee\setminus \Delta^\vee_k } \langle p, x\rangle - u^*(p) +\epsilon.
\]
By the Lipschitz continuity of $u$, the same must hold with $K$ replaced by an open neighbourhood inside $\R^{m+1}$. This shows that on some open neighbourhood $U_k$, we have
\[
u(x)= \sup_{p\in  \Delta^\vee_k } \langle p, x\rangle - u^*(p).
\]
But $\Delta^\vee_k$ is contained in the plane $\{p_k=0 \}$, hence $u$ is independent of $x_k$ on $U_k$.
\end{proof}

\section{Ansatz metrics}\label{Ansatzmetrics}

Starting from the solution $u$ to the optimal transport problem,  our goal is to construct continuous psh potentials defining semipositive metrics in the class $c_1(L)$, in both the archimedean and the non-archimedean setting. The K\"ahler version has volume measure close to being CY in total variation norm, and the NA version turns out to be precisely the NA CY metric.

\subsection{Non-archimedean semipositive metric}\label{NAansatz}

As in Example \ref{Fanohypersurfaceeg2}, let $\tau_0,\ldots \tau_{N_1}$ be a basis of $H^0(M, N_0L)$ which globally generate the ample line bundle. The NA Fubini-Study metric on $(X_K^{an},L)$
\[
\frac{1}{N_0}\log \max |\tau_i| 
\] 
agrees with the preferred model metric on $L$. This reference metric allows us to identify other metrics on $L$ with potentials.

We define the continuous NA potential on $L$
\begin{equation}
\phi^{NA}= u( -\log |F_0|,\ldots -\log |F_m|    ),
\end{equation}
where $u$ is the solution to the optimal transport problem, and the $\log |F_i|$ makes sense using the preferred model metric on $L$.

\begin{prop}
The potential $\phi^{NA}$ defines a continuous semipositive metric. 
\end{prop}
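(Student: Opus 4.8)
The plan is to show $\phi^{NA}$ is a uniform limit of non-archimedean Fubini--Study metrics, which is exactly the definition of a continuous semipositive metric. The key point is that the optimal transport solution $u$ inherits enough structure to be approximated by functions of the form (\ref{Legendre1}) with rational data, and such functions pull back under $(-\log|F_0|,\ldots,-\log|F_m|)$ to Fubini--Study metrics of the type appearing in the analysis of Section \ref{Decompositionofsections}.

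First I would recall, as in Proposition \ref{NACYpotentialstructure} and Remark \ref{Legendredualsubdivision}, that $u=u^{**}$ on $\R^{m+1}$ with gradient contained in $\Delta^\vee$, so that on $\Delta$ (equivalently modulo $\R(1,\ldots,1)$) it is precisely a convex function with gradient in $\bar\Delta^\vee$, hence admits a representation
\[
u(x)=\sup_{p\in\Delta^\vee}\bigl(\langle x,p\rangle-u^*(p)\bigr)
\]
with $u^*$ Lipschitz on $\Delta^\vee$. Next I would approximate $u^*$ uniformly by taking finitely many points $p^{(i)}\in\Delta^\vee$ with \emph{rational} coordinates, say $p^{(i)}=(-l_0^{(i)}/l,\ldots,-l_m^{(i)}/l)$ with $\sum d_k l_k^{(i)}\le l$ and at least one $l_k^{(i)}=0$ (these lie in $\Delta^\vee$ by construction), and setting $u_l(x)=\max_i(\langle x,p^{(i)}\rangle+c_i)$ for suitable constants $c_i\approx -u^*(p^{(i)})$. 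Because $u^*$ and $u$ are Lipschitz and $\Delta^\vee$ is compact, a net of such rational points becoming dense forces $\sup_{\bar\Delta^\vee}|u_l-u|\to 0$; this is a routine Legendre-transform continuity argument.

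Then I would observe that, for each such $u_l$, the potential $u_l(-\log|F_0|,\ldots,-\log|F_m|)$ is literally of the form (\ref{improvedFS}) — that is, it equals $\frac1l\max_i(l_0^{(i)}\log|F_0|+\cdots+l_m^{(i)}\log|F_m|+\text{(degree-completing term)}+lc_i)$ — where, exactly as in the proof that $\tilde\phi_j$ is a Fubini--Study metric, one appends the factor $\frac{l-\sum d_k l_k^{(i)}}{N_0}\max_p\log|\tau_p|$ to bring each term to total degree $l$. The sections $F_0^{l_0^{(i)}}\cdots F_m^{l_m^{(i)}}\tau_p^{\,(l-\sum d_k l_k^{(i)})/N_0}$ have no common zero (their common zero locus is contained in the common zero locus of the $\tau_p$, which is empty), so this is a genuine NA Fubini--Study metric on $(X_K^{an},L)$. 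Since $|{\cdot}|$ is bounded on $X_K^{an}$ for the model sections involved (indeed the map $x\mapsto(-\log|F_0|(x),\ldots,-\log|F_m|(x))$ has image in the compact $\R^{m+1}/\R(1,\ldots,1)$-image of $\Delta^\vee$ after the model normalization, as in Section \ref{Legendretransform}), the uniform convergence $u_l\to u$ pulls back to uniform convergence of the Fubini--Study potentials to $\phi^{NA}$, proving $\phi^{NA}$ is continuous semipositive.

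The main obstacle is the bookkeeping in the previous paragraph: one must check that the rational approximants $u_l$ can be chosen so that the associated tuples $(l_0^{(i)},\ldots,l_m^{(i)})$ genuinely satisfy the monomial constraints used in (\ref{improvedFS}) (nonnegativity, at least one zero entry, $\sum d_k l_k\le l$), and that the degree-completing term does not spoil the uniform estimate — i.e. that $\frac{l-\sum d_k l_k^{(i)}}{N_0}\max_p\log|\tau_p|$ contributes a controlled, and in the limit negligible, term on all of $X_K^{an}$, not merely on $Sk(X)$. This is handled exactly as in the lemma preceding Lemma \ref{domination}: the extra term is the model metric contribution, of the correct residual degree, and it is already built into the identification of potentials with metrics, so no genuinely new estimate is needed — only care that the bound $\log|s_{l_0\ldots l_m,a}|\le \frac{l-\sum d_kl_k}{N_0}\max_p\log|\tau_p|$ is being used with equality-type reference sections here, which is automatic since we are constructing the $u_l$ rather than decomposing a given section.
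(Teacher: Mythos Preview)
Your approach is essentially the same as the paper's: approximate $u$ by taking the Legendre-type sup over a rational net in $\Delta^\vee$ (the paper uses $\Delta^\vee\cap\frac{1}{k}\Z^{m+1}$ and the same $u^*$), check that the resulting $u_k(-\log|F_0|,\ldots,-\log|F_m|)$ is a genuine NA Fubini--Study metric by appending the degree-completing model term, and conclude by uniform convergence.

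There is, however, one genuine gap in your justification of the uniform bound. You claim that the map $v\mapsto(-\log|F_0|(v),\ldots,-\log|F_m|(v))$ has image ``in the compact $\R^{m+1}/\R(1,\ldots,1)$-image of $\Delta^\vee$''. This is false: the coordinates $x_i=-\log|F_i|$ are unbounded above on $X_K^{an}$ (they blow up along $\{F_i=0\}$), and $\Delta^\vee$ is the \emph{gradient} target, not the domain of the $x$-variables. Likewise, writing $\sup_{\bar\Delta^\vee}|u_l-u|$ is a domain/codomain mix-up. So uniform convergence of $u_l\to u$ on a compact does not automatically pull back to uniform convergence of the potentials on all of $X_K^{an}$.

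The fix --- which is also what underlies the paper's terse ``Lipschitz continuity of $u^*$ easily implies $-Ck^{-1}\le\phi_k^{NA}-\phi^{NA}\le0$'' --- is to use that $x_i\ge0$ on $X_K^{an}$ (since each $F_i$ extends over the model and hence has model norm $\le1$). Given $p\in\Delta^\vee_j$, round each coordinate \emph{up} toward zero to get $p'\in\Delta^\vee_j\cap\frac{1}{k}\Z^{m+1}$ with $p'\ge p$ componentwise and $|p-p'|\le C/k$; then $\langle x,p\rangle\le\langle x,p'\rangle$ for all $x\ge0$, while $|u^*(p)-u^*(p')|\le C/k$. This yields $0\le u(x)-u_k(x)\le C/k$ for all $x\in\R^{m+1}_{\ge0}$, hence the required global uniform estimate on $X_K^{an}$. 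With this correction your proof is complete and matches the paper's.
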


\begin{proof}
We construct  $\phi_k^{NA}$ on $(X_K^{an}, L)$ by
\begin{equation}\label{phikNA}
\phi_k^{NA}=  \max  \{  -\sum_0^m p_i \log |F_i| -u^*(p)  | p=(p_0,\ldots p_m)\in  \Delta^\vee\cap \frac{1}{k}\Z^{m+1}      \}.
\end{equation}
We claim these are Fubini-Study potentials. To see this, notice the metric associated to $\phi_k^{NA}$ can be rewritten in homogeneous form as
\[
 \max  \{   \frac{1}{k}\log \prod_{i=0}^m |F_i|^{-kp_i} + \frac{1+ \sum  d_i p_i}{N_0} \max \log |\tau_l| -u^*(p)  | p\in  \Delta^\vee\cap \frac{1}{k}\Z^{m+1}      \}.
\]
Since $-p_i\geq 0$ and $1+\sum d_ip_i\geq 0$, this is the maximum for a collection of logarithmic monomial terms. Observe these monomials have no common zero locus.

Morever, the Lipschitz continuity of $u^*$ easily implies
\[
-Ck^{-1}\leq \phi_k^{NA} - \phi^{NA} \leq 0.
\]
We conclude that $\phi^{NA}$ is a uniform limit of NA Fubini-Study potentials, hence defines a continuous semipositive metric.
\end{proof}


\subsection{Ambient K\"ahler metrics}

Our  goal is to produce an ansatz K\"ahler metric on $X_t$ which is close to being CY except on a subset with small measure. The construction is guided by the a priori expectation that the CY potentials should $C^0$ converge to the NA CY potential.

On the $(n-m)$-dimensional compact CY manifold $E_J=\{ F_0=\ldots F_m=0 \}$, we find the CY metric $\omega_{E_J}$ in the class $c_1(L)$: 
\[
\omega_{E_J}^{n-m}= (L^{n-m}\cdot E_J) \frac{ \Omega_{E_J}\wedge \overline{\Omega}_{E_J} }{ \int_{E_J}\Omega_{E_J}\wedge \overline{\Omega}_{E_J}    } = d_0\ldots d_m (L^{n+1}\cdot M)\frac{ \Omega_{E_J}\wedge \overline{\Omega}_{E_J} }{ \int_{E_J}\Omega_{E_J}\wedge \overline{\Omega}_{E_J}   }, 
\]
where we recall $\Omega_{E_J}$ is the holomorphic volume form on $E_J$. 
Comparing with the normalisation convention (\ref{normalisationC0}), we see
\begin{equation}\label{omegaEJ}
\omega_{E_J}^{n-m}=  d_0\ldots d_m (L^{n+1}\cdot M)\frac{C_0}{m!}   \Omega_{E_J}\wedge \overline{\Omega}_{E_J} .
\end{equation}
We extend $\omega_{E_J}$ to a smooth K\"ahler metric $\omega_M$ on $M$ in the class $c_1(L)$, and denote $h$ as the corresponding Hermitian metric on $L\to M$. The choice of this extension does not matter since we will mostly be concerned only with the neighbourhood of $E_J$ inside $M$. 

We introduce the continuous potential on each $X_t$ for $0<|t|\ll 1$,
\begin{equation}
\phi_t=|\log |t|| u(- \frac{\log |F_0|_{h^{\otimes d_0}}}{|\log |t||},  \ldots, - \frac{\log |F_m|_{h^{\otimes d_m}}}{|\log |t||}   ),
\end{equation}
where $|F_i|_{h^{\otimes d_i}}$ denote the magnitude of the sections $F_i$ with respect to the Hermitian metrics on $L^{\otimes d_i}\to M$ induced by $h$.
This defines a $(1,1)$-current on $(X_t,  c_1(L))$ by
$
\omega_t=  \omega_M + dd^c \phi_t.
$

\begin{prop}
The potential $\phi_t$ is $\omega_M$-psh, namely the  $(1,1)$-current $\omega_t\geq 0$. 
\end{prop}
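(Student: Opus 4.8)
The plan is to verify the psh property by a local computation near $E_J$ combined with the compactness of the argument variables. First I would observe that $\phi_t$ is manifestly continuous, so by standard potential theory it suffices to check $\omega_M + dd^c\phi_t \geq 0$ as a current on the regular locus where the expressions $\log|F_i|_{h^{\otimes d_i}}$ are smooth, i.e. away from the divisors $E_i \cap X_t$; near those divisors the potential $\phi_t$ tends to $+\infty$ with the correct sign (since $u$ is nonincreasing in each argument, being a supremum of $\langle x, p\rangle$ with $p \leq 0$, so $-\log|F_i|/|\log|t||$ large forces the relevant term to dominate), and a removable-singularity argument for $\omega_M$-psh functions handles the extension. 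So the heart of the matter is the positivity on the open dense locus.

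On that locus, write $g_i = -\log|F_i|_{h^{\otimes d_i}}$, so $dd^c g_i = -d_i\,\omega_M$ wherever $F_i \neq 0$ (since $F_i$ is a section of $L^{\otimes d_i}$ and the curvature of $h$ is $\omega_M$, up to the $dd^c\log|\cdot|$ contribution from the zero divisor which is a positive current supported on $E_i$ — hence only helps). Setting $y_i = g_i/|\log|t||$ and using the chain rule,
\begin{equation}
\omega_t = \omega_M + dd^c\phi_t = \omega_M + |\log|t||\Big( \sum_i \partial_{y_i}u \cdot dd^c y_i + \sum_{i,j} \partial_{y_i}\partial_{y_j}u \, \frac{dg_i \wedge d^c g_j}{|\log|t||^2}\Big).
\end{equation}
The first-derivative terms contribute $\big(1 - \sum_i d_i\,\partial_{y_i}u\big)\omega_M$, and since $\nabla u(x) \in \bar\Delta^\vee$ we have $\sum_i d_i \partial_{y_i}u \geq -1$... wait — the correct sign: the gradient $p = \nabla u$ satisfies $\sum d_i p_i \geq -1$ and $p_i \leq 0$, so $1 + \sum d_i p_i \geq 0$, and the coefficient of $\omega_M$ is $1 + \sum d_i \partial_{y_i}u = W(\nabla u)^{1/(n-m)} \geq 0$ — nonnegative, good. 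The Hessian term is $\frac{1}{|\log|t||}\sum_{i,j}\partial_{y_i}\partial_{y_j}u\, dg_i\wedge d^c g_j$, which is a nonnegative $(1,1)$-form because $D^2 u \geq 0$ (convexity of $u$) and $dg_i \wedge d^c g_j$ assembled against a positive semidefinite matrix yields a positive semidefinite form. Adding the two nonnegative pieces gives $\omega_t \geq 0$.

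The main subtlety — and where I would spend the most care — is the regularity of $u$: the optimal transport solution is only $C^{1,1}$ across the walls $\partial\bar\Delta^\vee_k$ (as the $m=1$ example shows, $u'$ is merely Hölder at the free boundary), so $D^2 u$ exists only a.e. and has an $L^\infty$ bound rather than continuity. To make the chain rule rigorous I would work chamber by chamber: on each open set $U_k$ from Lemma \ref{extensiononUk}, $u$ depends only on the coordinates $x_i$, $i \neq k$, and by Lemma \ref{uregularity} is smooth off a closed measure-zero set, so the above computation is valid classically there; then $\phi_t$ is continuous globally and $\omega_M$-psh on the union of these chambers, which is the complement of a closed set of zero Lebesgue measure in the argument space — hence, pulled back, a closed pluripolar-or-better set on $X_t$. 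Since continuous functions that are $\omega_M$-psh off a closed set of measure zero (indeed it suffices that the bad set be small, e.g. contained in a countable union of proper analytic subsets or more robustly handled by approximating $u$ by smooth convex functions $u_\varepsilon$ from the Legendre representation $u = \sup_p \langle x,p\rangle - u^*(p)$, regularizing $u^*$, and passing to the limit) extend to globally $\omega_M$-psh functions, we conclude. The cleanest route is probably the last one: approximate $u$ uniformly on $\Delta$ by smooth convex functions still of the form (\ref{Legendre1}) — which is possible since the sup-of-affine representation is stable under mollifying $u^*$ — run the positivity computation for each smooth approximant, and take the decreasing (or uniform) limit, using that a uniform limit of $\omega_M$-psh potentials is $\omega_M$-psh.
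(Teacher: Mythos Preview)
Your proposal reaches a correct conclusion, but by a much longer path than the paper takes, and the detour creates the very regularity headaches you then work to dissolve. The paper's proof is a two-line argument: since $u(x)=\sup_{p\in\Delta^\vee}(\langle x,p\rangle-u^*(p))$, one has
\[
\phi_t=\sup_{p\in\Delta^\vee}\Big(-\sum_i p_i\log|F_i|_{h^{\otimes d_i}}-u^*(p)|\log|t||\Big),
\]
so by stability of $\omega_M$-psh under (upper-semicontinuous) suprema it suffices to check that each affine piece $-\sum_i p_i\log|F_i|_{h^{\otimes d_i}}$ is $\omega_M$-psh. That is an immediate curvature computation using $p_i\leq 0$ and $1+\sum d_ip_i\geq 0$, with no regularity of $u$ ever entering.

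You essentially rediscover this at the very end (``approximate $u$ \ldots\ via the sup-of-affine representation''), but you frame it as a mollification-and-limit argument rather than noticing that the sup representation already does all the work. Your chain-rule computation is morally right but has a sign slip (in fact $dd^c g_i=+d_i\omega_M$ away from $E_i$, not $-d_i\omega_M$; you silently correct when you write the coefficient as $1+\sum d_i\partial_{y_i}u$), and more importantly it forces you into the chamber-by-chamber analysis, the measure-zero extension, and the approximation step --- none of which are needed. The Legendre representation is not just a cleanup device here; it is the whole proof.
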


\begin{proof}
Since the $\omega_M$-psh property is stable under taking maximum, and
\[
\phi_t= \max_{p\in \Delta^\vee} \left( \sum_0^m -p_i \log |F_i|_{h^{\otimes d_i}} - u^*(p) |\log |t|| \right),
\]
it suffices to show that $\sum_0^m -p_i \log |F_i|_{h^{\otimes d_i}}$ is $\omega_M$-psh. Writing $h$ in terms of local potentials as $h=e^{-2\phi_h}$, then
\[
\begin{split}
& \omega_M+ dd^c \sum_0^m -p_i \log |F_i|_{h^{\otimes d_i}} = dd^c\phi_h + dd^c \sum_0^m -p_i \log (|F_i|e^{-d_i\phi_h}) 
\\
= &  (1+\sum d_ip_i) \omega_M -\sum p_i dd^c\log |F_i|\geq 0,
\end{split}
\]
where we used that $p_i\leq 0$, $1+\sum d_ip_i\geq 0$.
\end{proof}

We now verify the uniform approximation property in section \ref{NAMAcxMA}.  We produce the Fubini-Study type approximants closely related to the NA version in section \ref{NAansatz},
\[
\phi_{k,t}= \max\{   \frac{1}{k} \log \Pi_{i=0}^m |F_i|_{h^{\otimes d_i}}^{-kp_i} + \frac{1+\sum d_ip_i}{N_0} \max \log |\tau_l|_{h^{\otimes N_0}} -u^*(p)|\log |t||: p\in \Delta^\vee\cap \frac{1}{k} \Z^{m+1}            \}.
\]

\begin{prop}
(Uniform approximation) $|\phi_{k,t}-\phi_t|\leq C(1+ k^{-1}|\log |t||) $ for $0<|t|\ll 1$.
\end{prop}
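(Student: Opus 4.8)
The plan is to compare the two maxima pointwise on $X_t$, exploiting that both $\phi_t$ and $\phi_{k,t}$ are sup's of affine-in-$\log|F_i|$ expressions over the same compact simplex $\Delta^\vee$, discretized at scale $\frac{1}{k}$. First I would write $\phi_t = \sup_{p\in\Delta^\vee} \ell_p$, where $\ell_p := \sum_0^m -p_i\log|F_i|_{h^{\otimes d_i}} - u^*(p)|\log|t||$, and $\phi_{k,t} = \max_{p\in\Delta^\vee\cap\frac{1}{k}\Z^{m+1}} \ell_p'$, where $\ell_p'$ is the corresponding homogeneous expression; on $X_t$ the extra term $\frac{1+\sum d_ip_i}{N_0}\max\log|\tau_l|_{h^{\otimes N_0}}$ is a fixed bounded function (bounded on the compact $X_t$, uniformly as $t\to0$ if we work in the relevant neighbourhood of $E_J$, or use a global bound on the ambient $M$), and the two presentations of the \emph{same} linear functional differ by a term controlled by $|1+\sum d_ip_i|\cdot\|\log|\tau_l|_{h^{\otimes N_0}}\|_\infty$, hence by a constant $C$ uniform in $p\in\Delta^\vee$ and in $t$. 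This already gives $|\phi_{k,t} - \max_{p\in\Delta^\vee\cap\frac1k\Z} \ell_p| \le C$.

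Next I would handle the discretization error: since $\Delta^\vee$ is a fixed polytope, for every $p\in\Delta^\vee$ there is a lattice point $p'\in\Delta^\vee\cap\frac1k\Z^{m+1}$ with $|p-p'|\le \frac{C}{k}$ (using that $\Delta^\vee$ has nonempty interior and is the union of the simplices $\Delta^\vee_k$, so nearby lattice points stay inside). Then $|\ell_p - \ell_{p'}| \le |p-p'|\cdot\big(\sum_0^m |\log|F_i|_{h^{\otimes d_i}}|\big) + |u^*(p)-u^*(p')|\,|\log|t|| \le \frac{C}{k}\big(\max_i |\log|F_i|_{h^{\otimes d_i}}| + |\log|t||\big)$, where the Lipschitz bound on $u^*$ from Section~\ref{Legendretransform} is used. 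On $X_t$, the quantities $|\log|F_i|_{h^{\otimes d_i}}|$ are bounded by $C|\log|t||$ — this is the one genuinely geometric input, coming from the fact that on $X_t = \{F_0\cdots F_m = -tF\}$ one has $\sum_0^m \log|F_i|_{h^{\otimes d_i}} = \log|t| + O(1)$ together with each $\log|F_i|_{h^{\otimes d_i}} \le O(1)$ from the Hermitian metric being fixed and $F_i$ being a bounded section, forcing each $\log|F_i|_{h^{\otimes d_i}} \ge \log|t| - O(1)$ as well. Combining, $|\ell_p - \ell_{p'}| \le \frac{C}{k}|\log|t|| + \frac{C}{k}$, and taking sup/max over $p$ yields $|\max_{\Delta^\vee}\ell_p - \max_{\Delta^\vee\cap\frac1k\Z}\ell_p| \le C k^{-1}(1+|\log|t||)$.

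Assembling the two estimates by the triangle inequality gives $|\phi_{k,t} - \phi_t| \le C + Ck^{-1}(1+|\log|t||) \le C(1+k^{-1}|\log|t||)$ for $0<|t|\ll 1$, as claimed. The main obstacle is the geometric lower bound $\log|F_i|_{h^{\otimes d_i}} \ge \log|t| - O(1)$ on $X_t$: a priori some $|F_i|$ could be much smaller than $|t|$ near deep strata, but on $X_t$ the product relation $|F_0\cdots F_m|_{h} = |t|\,|F|_h \asymp |t|$ combined with the upper bounds $|F_j|_{h^{\otimes d_j}} \le C$ forces every individual factor to satisfy $|F_i|_{h^{\otimes d_i}} \ge c|t|$; one should phrase this carefully, perhaps restricting attention to a fixed neighbourhood of $E_J$ (where the relevant sections are nonvanishing away from $E_J$) and noting that outside that neighbourhood both potentials are comparable to a bounded reference, so the estimate is easy there. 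Everything else is a routine Lipschitz-plus-lattice-spacing computation.
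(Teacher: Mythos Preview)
Your overall strategy is the right one and mirrors the paper's two-step argument (bound the $\tau_l$ term by $C$, then control the discretisation error), but your treatment of the discretisation step has a genuine gap. You claim that on $X_t$ one has $|\log|F_i|_{h^{\otimes d_i}}|\le C|\log|t||$, justified by $|F_0\cdots F_m|_h=|t|\,|F|_h\asymp |t|$. This is false: $|F|_h$ is not bounded below on $X_t$, since $\{F=0\}$ meets $X_t$ in a nonempty divisor, and on $X_t$ the locus $\{F_i=0\}$ is exactly $\{F_i=0\}\cap\{F=0\}$, which is nonempty. Thus $\log|F_i|_{h^{\otimes d_i}}$ attains $-\infty$ on $X_t$, and your bound $|\ell_p-\ell_{p'}|\le \frac{C}{k}\max_i|\log|F_i|_h|+\frac{C}{k}|\log|t||$ blows up there. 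Your suggested patch (work in a neighbourhood of $E_J$, bounded reference elsewhere) does not help, because the bad locus $\{F=0\}\cap X_t$ passes through every neighbourhood of $E_J$.

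The fix, which is what the paper's terse sentence ``by the Lipschitz bound on $u^*$'' is implicitly using, is to exploit the \emph{sign structure} rather than bound $|x_i|$. One has the one-sided bound $x_i:=-\log|F_i|_{h^{\otimes d_i}}/|\log|t||\ge -C/|\log|t||$ (from $|F_i|_h\le C$), with no upper bound needed. Given the maximiser $p^*\in\Delta^\vee_j$ for $u(x)$, round each coordinate \emph{towards zero}, $p'_i:=\lceil kp^*_i\rceil/k$; then $p'\in\Delta^\vee_j\cap\tfrac{1}{k}\Z^{m+1}$, $p'_i\ge p^*_i$, and $|p'-p^*|\le C/k$. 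Since $p'_i-p^*_i\ge 0$ and $x_i\ge -C/|\log|t||$, one gets $\langle p',x\rangle\ge\langle p^*,x\rangle-C/(k|\log|t||)$, so the linear term costs essentially nothing and only the Lipschitz variation $|u^*(p')-u^*(p^*)|\le C/k$ enters. This gives $u(x)-u_k(x)\le C/k$, hence $|\phi_t-\tilde\phi_{k,t}|\le Ck^{-1}|\log|t||+C$, and then the $\tau_l$ contribution adds another $C$.
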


\begin{proof}
The second term in $\phi_{k,t}$ is bounded by
\[
| \max_l \log |\tau_l|_{h^{\otimes N_0}} |\leq C.
\]
After deleting this term, the difference between $\phi_{k,t}$ and $\phi_t$ is caused by the discreteness condition $p\in \frac{1}{k}\Z^{m+1}$. By the Lipschitz bound on $u^*(p)$, this error is bounded by $|\log |t||Ck^{-1}$.
\end{proof}

In particular (\ref{uniformapproximation1}) holds upon choosing $r_k\ll e^{-k}$. By the discussion of section \ref{NAMAcxMA}, the $\phi_k^{NA}$ in (\ref{phikNA}) is the $C^0$-limit of $\frac{1}{|\log |t||}\phi_{k,t}$ in the hybrid topology. The uniform approximation then implies

\begin{cor}\label{hybridlimit1}
The NA potential $\phi^{NA}$ is the $C^0$ limit of $\frac{1}{|\log |t||}\phi_t$ in the hybrid topology. Morever, the NA MA measure associated to $\phi^{NA}$ is the weak limit of the complex MA measures $\omega_t^n$ as $t\to 0$.
\end{cor}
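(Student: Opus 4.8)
The plan is to deduce Corollary \ref{hybridlimit1} directly from the machinery set up in section \ref{NAMAcxMA}, together with the two Propositions just proved ($\omega_M$-psh property of $\phi_t$, and the uniform approximation estimate). The key point is that we have produced a family of continuous $\omega_h$-psh potentials $\phi_t$ on $(X_t,L)$ and a sequence of explicit Fubini--Study approximants $\phi_{k,t}$, so it suffices to check that these fit the hypotheses of the ``more general potentials'' discussion at the end of section \ref{NAMAcxMA}, which in turn is Favre's Chern--Levine argument.

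First I would pin down the approximants. For fixed $k$, each $\phi_{k,t}$ is exactly of the form $\frac{1}{k}\max_i(\log|s_i|_{h^{\otimes k}}+c_i|\log|t||)$ with $s_i$ the global sections $\prod|F_i|^{-kp_i}\cdot(\text{monomial in }\tau_l)$ of $kL$ and $c_i=-u^*(p)$, ranging over the finite set $p\in\Delta^\vee\cap\frac1k\Z^{m+1}$ (after clearing denominators so everything is a genuine section of a fixed multiple of $L$). These sections have no common zero, as noted in the proof of the NA proposition. By the first bullet of section \ref{NAMAcxMA}, $\frac{1}{|\log|t||}\phi_{k,t}\to\phi_{FS,k}$ in the $C^0$-hybrid topology, and comparing with \eqref{phikNA} one reads off $\phi_{FS,k}=\phi_k^{NA}$ (the two maxima are over the same homogeneous monomial data). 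Then I invoke the Uniform approximation Proposition: choosing $r_k\ll e^{-k}$, for $0<|t|\le r_k$ we get $|\phi_{k,t}-\phi_t|\le C(1+k^{-1}|\log|t||)\le \epsilon_k|\log|t||$ with $\epsilon_k\to 0$, which is precisely condition \eqref{uniformapproximation1}. Since we have also verified $\phi_t$ is $\omega_M$-psh, the Remark after \eqref{uniformapproximation1} applies and we do not need the radii $r_k$ to be $k$-independent.

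With the hypotheses checked, the conclusion of section \ref{NAMAcxMA} gives a continuous NA psh potential $\phi$ which is the $C^0$-hybrid limit of $\frac{1}{|\log|t||}\phi_t$, and moreover $\omega_t^n=(\omega_M+dd^c\phi_t)^n$ converges weakly to $MA(\phi)$. It remains only to identify $\phi=\phi^{NA}$: since $\phi_k^{NA}=\phi_{FS,k}$ is the hybrid limit of $\frac{1}{|\log|t||}\phi_{k,t}$, and $\frac{1}{|\log|t||}|\phi_{k,t}-\phi_t|\le\epsilon_k$ uniformly for small $t$, the limits satisfy $\|\phi-\phi_k^{NA}\|_{C^0(X_K^{an})}\le\epsilon_k$; letting $k\to\infty$ and using $\phi_k^{NA}\to\phi^{NA}$ uniformly (the estimate $-Ck^{-1}\le\phi_k^{NA}-\phi^{NA}\le0$ from the NA proposition) forces $\phi=\phi^{NA}$. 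Hence $MA(\phi^{NA})$ is the weak limit of $\omega_t^n$.

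I do not anticipate a genuine obstacle here, since all the analytic heavy lifting (Favre's Chern--Levine estimate, the hybrid-topology continuity principle) is imported from section \ref{NAMAcxMA} and the cited literature. The only point demanding mild care is bookkeeping: making sure the $\phi_{k,t}$ really are Fubini--Study potentials for \emph{global} sections of an \emph{integer} multiple of $L$ after clearing the $\frac1k$-denominators, and that passing to the subsequence-free limit is legitimate because the limit $\phi^{NA}$ is already identified independently. One should also double-check that the weak convergence $\omega_t^n\to MA(\phi^{NA})$ is stated against continuous test functions on the whole hybrid space $X^{An}$, which is exactly what Favre's argument yields and what Corollary \ref{hybridlimit1} claims.
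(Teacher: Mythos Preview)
Your proposal is correct and follows essentially the same route as the paper: verify that the Fubini--Study approximants $\phi_{k,t}$ satisfy the uniform approximation hypothesis \eqref{uniformapproximation1} (with $r_k\ll e^{-k}$), identify their hybrid limits with $\phi_k^{NA}$, and then invoke the machinery of section \ref{NAMAcxMA} to conclude. The paper's argument is terser but identical in substance; your additional care in explicitly identifying $\phi=\phi^{NA}$ via the double limit is a harmless elaboration.
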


\subsection{Volume measure of the K\"ahler ansatz}

The main advantage of the K\"ahler ansatz $\omega_t$ is that the volume form is approximately CY in the generic region.

\begin{prop}\label{totalvariation1}
Given any $0<\delta\ll 1$, then for any sufficiently small $t$ depending on $\delta$,  there exists an open subset $U_{t,\delta}$ on $X_t$ with the normalised CY measure $\mu_t(U_{t,\delta})>1-\delta$, where the volume form is $C^0$-close to being CY:
\[
|{\omega_t^n} - (L^n) \mu_t| \leq \delta \mu_t.
\]
\end{prop}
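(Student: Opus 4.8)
The plan is to localize near the intersection stratum $E_J$ inside $M$, where the degeneration family $X_t$ looks like the hypersurface $\{F_0\cdots F_m = -tF\}$ in the total space of $\bigoplus_0^m L^{\otimes d_i}\to E_J$, and to compute $\omega_t^n$ there explicitly in terms of the optimal transport solution $u$. Concretely, I would fix a small $\delta$, and use the wall-chamber decomposition $\Delta = \bigcup_k \Delta_k$ (up to a closed measure-zero set) together with the open neighbourhoods $U_k \subset \R^{m+1}$ from Lemma \ref{extensiononUk}. Choose a compact $K_k \subset \Delta_k$ with $\mu_0(\bigcup_k K_k) > 1-\delta/2$, avoiding a neighbourhood of the closed bad set $E$ where $u$ fails to be smooth (Lemma \ref{uregularity}). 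Then set $U_{t,\delta}$ to be the preimage in $X_t$ of $\bigcup_k K_k$ under the map $(-\log|F_0|_{h^{\otimes d_0}}/|\log|t||, \ldots, -\log|F_m|_{h^{\otimes d_m}}/|\log|t||)$, intersected with the $O(1/|\log|t||)$-neighbourhood of $E_J$. By Theorem \ref{Volumeconvergence} and the volume concentration described in Example \ref{Fanohypersurfaceexample2}, $\mu_t(U_{t,\delta}) > 1-\delta$ for $t$ small.

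The core computation is the following: on the region where the rescaled gradient argument lies in $\Delta_k$, the potential $\phi_t = |\log|t||\, u(\cdot)$ is smooth (not a genuine $\max$), and by Lemma \ref{extensiononUk} it is independent of the $x_k$-variable, i.e.\ of $\log|F_k|$. Using coordinates $w_i = -\log|F_i|_{h^{\otimes d_i}}$ for $i \ne k$ as $m$ of the $2n$ real directions, plus the fibre angles $\arg F_i$, plus the $E_J$-directions, one computes $\omega_t = \omega_M + dd^c\phi_t$ as a block form: an $E_J$-block that limits to $\omega_{E_J}$, and an $m$-dimensional real Hessian block $(D^2 u)(\nabla$-argument$)$ coupled with the corresponding angular $(d\theta)$ directions, plus lower-order cross terms of size $O(1/|\log|t||)$. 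Taking the top power and keeping track of the $(1 + \sum d_i p_i)$ curvature factors from the line bundle directions (exactly the factor appearing in the psh proof), the dominant term of $\omega_t^n$ is, up to the small cross-term errors,
\[
\omega_t^n \approx c\, \omega_{E_J}^{n-m} \wedge \det(D^2 u)\, W(\nabla u)\, \bigwedge_{i\ne k}\frac{1}{|\log|t||} d\log|F_i| \wedge d^c\log|F_i|,
\]
where the combinatorial constant $c$ and the identity $\det(D^2 u)\, W(\nabla u) = C_1/\int_\Delta dx$ from \eqref{realMAtypeeqn} conspire — after matching with the normalisation \eqref{omegaEJ} and \eqref{normalisedCYmeasure} — to give precisely $(L^n)\,d\mu_t$ in the limit. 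This is the heart of the matter, and it is really just the a posteriori verification that the generalised Calabi ansatz is calibrated to solve the CY equation to leading order.

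The remaining work is to make the $\approx$ quantitative: on the compact sets $K_k$, $u$ is smooth with bounded derivatives, $W(\nabla u)$ is bounded above and (by strict convexity) $\det(D^2 u)$ is bounded below, so all error terms — the curvature terms in $\omega_M$ beyond the fibre directions, the cross terms mixing $E_J$-directions with fibre directions, and the error from replacing the CY metric on $E_J$ by its extension $\omega_M$ — are $O(1/|\log|t||)$ uniformly, hence $\le \delta$ times the main term for $t$ small depending on $\delta$. I expect the main obstacle to be bookkeeping: setting up adapted local coordinates on $X_t$ near $E_J$ that simultaneously diagonalize the ansatz into the $E_J$-block and the real-Hessian block, and controlling the cross terms uniformly as one ranges over the (finitely many) chambers $\Delta_k$ and over $K_k$. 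The free-boundary locus $E$ and the chamber walls are genuinely excised — they carry no $\mu_0$-mass — so no delicate analysis is needed there; the only subtlety is ensuring the excised neighbourhood can be taken uniformly small in $\mu_0$-measure, which follows from the measure-zero and closedness statements already established.
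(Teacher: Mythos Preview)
Your proposal is correct and follows essentially the same route as the paper: localize near $E_J$, restrict to the chambers $\Delta_k$ (where $u$ loses the $x_k$-dependence via Lemma \ref{extensiononUk}) minus a small-measure bad set for regularity, compute $\omega_t$ explicitly via $dd^c$ and Poincar\'e--Lelong to get the form $(1+\sum d_i\partial_i u)\omega_M + \text{Hessian block}$, binomially expand $\omega_t^n$, and then invoke the real MA-type equation \eqref{realMAtypeeqn} together with \eqref{omegaEJ} and \eqref{normalisedCYmeasure} to match the main term with $(L^n)\mu_t$. The only cosmetic difference is that the paper writes out the Poincar\'e--Lelong step directly rather than speaking of a ``block form'', and obtains the lower bound on $W(\nabla u)$ (equivalently on $1+\sum d_i\partial_i u$) from the $C^2$ bounds on the compact $K_k$ rather than invoking strict convexity.
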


\begin{proof}
By discarding a subset of $X_t$ with measure $\ll \delta$, we can focus on the neighbourhood of $E_J$ inside $X_t\subset M$, where $M$ is locally modelled on a holomorphic vector bundle over $E_J$. We consider the subsets with
\[
x=( -\frac{\log |F_0|_{h^{\otimes d_0}} }{|\log |t||}, \ldots  -\frac{\log |F_m|_{h^{\otimes d_m} }}{|\log |t||} )\in U_k, \quad k=0, 1, \ldots m,
\]
where $U_k$ is an open neighbourhood of $\Delta_k$ as in Lemma \ref{extensiononUk}. Without loss we focus on $U_0$, so $u=u(x_1,\ldots x_m)$ on $U_0$. We compute on the smooth locus of $u$ (which has full measure by Lemma \ref{uregularity}):
\[
\begin{split}
& \omega_t= \omega_M+ dd^c \phi_t
\\
=& \omega_M+ \frac{1}{|\log |t||}\sum_{1\leq i,j\leq m} \frac{\partial^2 u}{\partial x_i\partial x_j} d\log |F_i|_{h^{\otimes d_i}} \wedge d^c \log |F_j|_{h^{\otimes d_j} } 
\\
& -\sum_1^m \frac{\partial u}{\partial x_i} dd^c \log |F_i|_{ h^{\otimes d_i} }.
\end{split}
\]
By the Poincar\'e-Lelong equation, since we are computing away from the divisor $F_i=0$, the term
\[
 -dd^c \log |F_i|_{ h^{\otimes d_i} }= -dd^c (|F_i|e^{-d_i\phi_h})  = d_i dd^c \phi_h= d_i\omega_M.
\]
Thus $\omega_t$ simplifies into
\[
(1+ \sum_1^m d_i\frac{\partial u}{\partial x_i}  )\omega_M+  \frac{1}{|\log |t||}\sum_{1\leq i,j\leq m} \frac{\partial^2 u}{\partial x_i\partial x_j} d\log |F_i|_{h^{\otimes d_i}} \wedge d^c \log |F_j|_{h^{\otimes d_j} } .
\]
After deleting a subset of $\Delta_k$ of Lebesgue measure $\ll \delta$, using the regularity of $u$ from Lemma \ref{uregularity}, we can ensure that $u$ has $C^2$ bounds depending on $\delta$ but not on $t$, whence $1+ \sum_1^m d_i\frac{\partial u}{\partial x_i} $ has a lower bound depending on $\delta$. For $t$ sufficiently small depending on $\delta$, the distinction between $d\log |F_i|$ and $d\log |F_i|_{h^{\otimes d_i} }$ is then suppressed by the $\frac{1}{|\log |t||}$ factor, so $\omega_t$ admits an approximation
\[
(1+ \sum_1^m d_i\frac{\partial u}{\partial x_i}  )\omega_M+  \frac{1}{|\log |t||}\sum_{1\leq i,j\leq m} \frac{\partial^2 u}{\partial x_i\partial x_j} d\log |F_i| \wedge d^c \log |F_j| 
\]
with error $\ll \delta$.

After performing the binomial expansion for $\omega_t^n$, the main term is 
\begin{equation}
\frac{n!}{m!(n-m)!} (1+ \sum_1^m d_i\frac{\partial u}{\partial x_i}  )^{n-m}\omega_M^{n-m} \wedge m!\det(D^2u)_{m\times m} \bigwedge_1^m \frac{1}{|\log |t||} d\log |F_i| \wedge d^c \log |F_i| .
\end{equation}
The other terms in the binomial expansion would have to involve at least $n-m+1$ wedge products of $\omega_M$, but in the normal direction to $E_J$, the $\omega_M$ is suppressed by $\sum_{1\leq i,j\leq m} \frac{\partial^2 u}{\partial x_i\partial x_j} d\log |F_i| \wedge d^c \log |F_j| $, so such contributions can be neglected.

We now simplify this main term. Applying the real MA type equation (\ref{realMAtypeeqn}), this becomes
\[
\frac{\sum d_i}{\prod d_i}\omega_M^{n-m} \wedge m!\bigwedge_1^m \frac{1}{|\log |t||} d\log |F_i| \wedge d^c \log |F_i|.
\]
Using again the suppression of $\omega_M$ in the normal direction, and the CY condition on $\omega_{E_J}$ from (\ref{omegaEJ}), the above expression is approximated by
\[
(\sum d_i)(L^{n+1}\cdot M) C_0 \Omega_{E_J}\wedge \overline{\Omega}_{E_J} \wedge \bigwedge_1^m \frac{1}{|\log |t||} d\log |F_i| \wedge d^c \log |F_i|.
\]
By the asymptote of the normalised CY volume measure (\ref{normalisedCYmeasure}), this expression is approximated by
$
(\sum d_i)(L^{n+1}\cdot M) \mu_t.
$
We recall $-K_M=(d_0+\ldots +d_m)L$, so
\[
(L^n)=L^n\cdot X_t= L^n\cdot (-K_M)=(\sum d_i)(L^{n+1}\cdot M),
\]
and the above simplifies to $(L^n) \mu_t$.

In summary, the deleted region has $\mu_t$-measure $<\delta$, and on the good region all the approximation errors are $\ll \delta$ for sufficiently small $t$ depending on $\delta$, whence the claim.
\end{proof}

\begin{cor}\label{totalvariation2}
The total variation
\[
\int_{X_t} |\omega_t^n- (L^n)\mu_t|\to 0,\quad t\to 0.
\]
\end{cor}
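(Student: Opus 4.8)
The plan is to bootstrap from Proposition \ref{totalvariation1} using only the elementary fact that $\omega_t^n$ and $(L^n)\mu_t$ are positive measures of the \emph{same} total mass. Indeed, $\omega_t=\omega_M+dd^c\phi_t$ is a closed positive $(1,1)$-current on $X_t$ with continuous potential in the class $c_1(L)$, so by Bedford--Taylor theory its Monge--Amp\`ere $\omega_t^n$ is a well-defined positive measure with $\int_{X_t}\omega_t^n=(L^n)$; and $\int_{X_t}(L^n)\mu_t=(L^n)$ because $\mu_t$ is a probability measure. Hence the signed measure $\nu_t:=\omega_t^n-(L^n)\mu_t$ satisfies $\nu_t(X_t)=0$, and the total variation $|\nu_t|(X_t)$ is what we must show tends to $0$.

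First I would fix $0<\delta\ll 1$ and take $t$ small (depending on $\delta$) so that Proposition \ref{totalvariation1} applies, yielding the open set $U_{t,\delta}\subset X_t$ with $\mu_t(U_{t,\delta})>1-\delta$ and $|\omega_t^n-(L^n)\mu_t|\le\delta\mu_t$ on $U_{t,\delta}$. Splitting $X_t=U_{t,\delta}\sqcup(X_t\setminus U_{t,\delta})$, the contribution from the good region is immediate:
\[
|\nu_t|(U_{t,\delta})\le \delta\,\mu_t(U_{t,\delta})\le\delta.
\]
For the bad region I would crudely bound $|\nu_t|(X_t\setminus U_{t,\delta})\le \omega_t^n(X_t\setminus U_{t,\delta})+(L^n)\mu_t(X_t\setminus U_{t,\delta})$. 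The second term is at most $(L^n)\delta$ since $\mu_t(X_t\setminus U_{t,\delta})<\delta$. For the first term, use mass conservation together with the good-region estimate: $\omega_t^n(U_{t,\delta})\ge (L^n)\mu_t(U_{t,\delta})-|\nu_t|(U_{t,\delta})\ge (L^n)(1-\delta)-\delta$, so $\omega_t^n(X_t\setminus U_{t,\delta})=(L^n)-\omega_t^n(U_{t,\delta})\le (L^n)\delta+\delta$.

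Adding up gives $|\nu_t|(X_t)\le \bigl(2+2(L^n)\bigr)\delta$ for all sufficiently small $t$, and since $\delta>0$ is arbitrary this proves $\int_{X_t}|\omega_t^n-(L^n)\mu_t|\to 0$ as $t\to 0$. There is really no serious obstacle here; the only point requiring a little care is the equality of total masses (which makes the $L^1$ control follow from the pointwise control on a set of nearly full measure), so I would make sure to justify $\int_{X_t}\omega_t^n=(L^n)$ for the continuous-potential current $\omega_t$ before doing the bookkeeping above.
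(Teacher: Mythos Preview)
Your proposal is correct and follows exactly the same approach as the paper: the paper's proof is the one-liner ``Observe $\int_{X_t}\omega_t^n=\int_{X_t}(L^n)\mu_t=(L^n)$, and apply Prop.~\ref{totalvariation1},'' and your argument is precisely the standard bookkeeping that this sentence leaves implicit. The only extra content you add is an explicit justification of the equality of total masses via Bedford--Taylor theory, which is a reasonable point to spell out.
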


\begin{proof}
Observe  $\int_{X_t}\omega_t^n= \int_X (L^n)\mu_t=(L^n)$, and apply Prop. \ref{totalvariation1}.
\end{proof}

\begin{rmk}\label{C0regularisation}
The potential $\phi_t$ is globally non-smooth. However, by an arbitrarily $C^0$-small regularisation of $\phi_t$, we can obtain a genuine K\"ahler potential, which is $C^\infty_{loc}$ close to $\phi_t$ on the smooth region, and in particular its volume measure is arbitrarily close to $\omega_t^n$ in the total variation norm.
\end{rmk}

\begin{cor}\label{hybridlimit2}
The complex MA measure $\omega_t^n$ on $X_t$ converges to the NA CY measure $(L^n)\mu_0$ as $t\to 0$. In particular, the NA metric $\phi^{NA}$ is the unique NA CY metric.
\end{cor}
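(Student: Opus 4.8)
The plan is to derive both assertions purely by combining results already in place, without any new analytic input. First I would establish the weak convergence $\omega_t^n\to (L^n)\mu_0$ in the hybrid topology. Fix a continuous function $f$ on the hybrid space $X^{An}$. Corollary \ref{totalvariation2} gives $\int_{X_t}|\omega_t^n-(L^n)\mu_t|\to 0$, so
\[
\Bigl| \int_{X_t} f\,\omega_t^n - (L^n)\!\int_{X_t} f\,d\mu_t \Bigr| \le \Bigl(\sup_{X^{An}}|f|\Bigr)\int_{X_t}\bigl|\omega_t^n-(L^n)\mu_t\bigr| \longrightarrow 0 ,
\]
whereas Theorem \ref{Volumeconvergence} gives $\int_{X_t} f\,d\mu_t\to \int_{X_K^{an}} f\,d\mu_0$. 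Adding the two, $\int_{X_t} f\,\omega_t^n\to (L^n)\int_{X_K^{an}} f\,d\mu_0$ for every continuous $f$ on $X^{An}$, which is precisely what it means for $\omega_t^n$ to converge weakly to $(L^n)\mu_0$ in the hybrid topology. (Here $\omega_t^n$ is the Bedford--Taylor Monge--Amp\`ere measure of the non-smooth potential $\phi_t$; by Remark \ref{C0regularisation} one may first replace $\phi_t$ by an arbitrarily $C^0$-small K\"ahler regularisation, which perturbs $\omega_t^n$ by arbitrarily small total variation, so the weak limit is well defined and unchanged.)

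Next I would read off the identification with the NA CY metric. Recall that $\phi^{NA}$ defines a continuous semipositive metric (Section \ref{NAansatz}), and by Corollary \ref{hybridlimit1} its non-archimedean Monge--Amp\`ere measure is the weak limit of $\omega_t^n$ as $t\to 0$. By the first step this limit equals $(L^n)\mu_0$, hence $MA(\phi^{NA})=(L^n)\mu_0$. Since the solution of this equation is unique up to an additive constant by the non-archimedean Calabi conjecture, $\phi^{NA}$ agrees with the NA CY metric $\phi_{CY,0}$ of Example \ref{Fanohypersurfaceeg2}; normalising the optimal transport potential $u$ appropriately (which changes neither $\nabla u$ nor the equation) makes this equality exact.

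I do not expect a genuine obstacle at this point: the substantive content --- the local volume expansion of Proposition \ref{totalvariation1} and Corollary \ref{totalvariation2}, together with the hybrid continuity principle for Monge--Amp\`ere measures recorded in Section \ref{NAMAcxMA} --- has already been carried out, and the corollary amounts to bookkeeping. The one thing I would check carefully is the compatibility of the two weak-convergence statements being combined: both $\mu_t\to\mu_0$ (Theorem \ref{Volumeconvergence}) and $\omega_t^n\to MA(\phi^{NA})$ (Corollary \ref{hybridlimit1}) are formulated against continuous test functions on the \emph{total} hybrid space $X^{An}$, not merely on the individual fibres $X_t$, which is exactly what legitimises the displayed estimate and the passage to the weak limit. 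As a byproduct, this corollary combined with Corollary \ref{hybridlimit1} upgrades the hybrid convergence of $\frac{1}{|\log |t||}\phi_t$ into a statement about the genuine NA CY metric, which is the form needed for the main theorem.
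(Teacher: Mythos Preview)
Your proposal is correct and follows exactly the same route as the paper: combine the total variation bound of Corollary~\ref{totalvariation2} with Theorem~\ref{Volumeconvergence} to get $\omega_t^n\to (L^n)\mu_0$, then invoke Corollary~\ref{hybridlimit1} (and, implicitly, the uniqueness in the NA Calabi conjecture) to identify $\phi^{NA}$ with the NA CY metric. The paper's own proof is a two-line version of what you wrote; your additional remarks on the regularisation and on testing against continuous functions on the full hybrid space are the natural details behind those two lines.
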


\begin{proof}
For the first claim, recall that $\mu_t\to \mu_0$ in the hybrid topology (\cf Thm. \ref{Volumeconvergence}), and that $\int_{X_t} |\omega_t^n-(L^n)\mu_t|\to 0$ as $t\to 0$.  For the second claim, we apply Cor. \ref{hybridlimit1}.
\end{proof}

\section{Convergence of the CY potential}\label{ConvergenceCYpotential}

\subsection{Pluripotential theory backgrounds}

We will recall a few pluripotential theoretic estimates from  \cite{LiuniformSkoda}\cite{LiFermat}\cite{LiNA}, which build upon Kolodziej's technique of capacity estimates \cite{EGZ}\cite{Kolodziej1}\cite{Kolodziej2}.

\begin{thm}(Uniform Skoda estimate)\label{uniformSkoda}\cite{LiuniformSkoda}
 There are uniform positive constants $\alpha, A$ independent of $t$ for $0<|t|\ll 1$, such that 
	\[
	\int_{X_t} e^{-\alpha v}  d\mu_t \leq A, \quad \forall v\in PSH(X_t,\frac{1}{|\log |t||} \omega_M|_{X_t}) \text{ with } \sup_{X_t} v=0.
	\]

\end{thm}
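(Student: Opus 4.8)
The plan is to reduce the statement to a uniform $\alpha$-invariant (Tian-type) estimate for the fixed Kähler manifold pair, by carefully tracking how the relevant geometric quantities scale as $t\to 0$. Observe that $v\in PSH(X_t,\frac{1}{|\log|t||}\omega_M|_{X_t})$ with $\sup_{X_t}v=0$ means $|\log|t||\,v\in PSH(X_t,\omega_M|_{X_t})$, so the class of competitors is, after rescaling, just the set of $\omega_M$-psh functions on $X_t$ with supremum $0$; the subtlety is entirely in the measure $\mu_t$ and in the fact that $X_t$ itself varies and degenerates. So I would first restate the goal as: $\int_{X_t} e^{-\frac{\alpha}{|\log|t||}\psi}\,d\mu_t\le A$ uniformly, for $\psi\in PSH(X_t,\omega_M|_{X_t})$, $\sup\psi=0$.

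The key steps I anticipate are as follows. First, localize: by Corollary \ref{totalvariation2}-type reasoning and the explicit asymptotics (\ref{normalisedCYmeasure}) of $\mu_t$, the measure $\mu_t$ concentrates (up to $O(1/|\log|t||)$ mass) in a tube around $E_J$ of "thickness" $O(1/|\log|t||)$ in the $-\log|F_i|_{h^{\otimes d_i}}$ coordinates, where $\mu_t$ looks like $C_0\,\Omega_{E_J}\wedge\overline{\Omega}_{E_J}\wedge\bigwedge_1^m \frac{1}{|\log|t||}d\log|F_i|\wedge d^c\log|F_i|$. Second, on this tube, I would change variables to $x_i=-\log|F_i|_{h^{\otimes d_i}}/|\log|t||$ and the transverse/residual coordinates, so that $\mu_t$ becomes (approximately) the product of the fixed CY volume $\Omega_{E_J}\wedge\overline\Omega_{E_J}$ on $E_J$ and the fixed Lebesgue measure on the simplex $\Delta$ in the $x$-variables; crucially the normalizing $1/|\log|t||^m$ Jacobian cancels against the $1/|\log|t||$-thickness of the tube in each of the $m$ directions, leaving an $O(1)$ measure. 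Third, apply the classical Skoda/$\alpha$-invariant estimate: a uniform integrability bound $\int e^{-\alpha\psi}\le A$ holds for $\omega$-psh functions normalized by $\sup=0$ on a fixed compact Kähler manifold (here a fixed neighbourhood of $E_J$ inside $M$), with $\alpha>0$ depending only on the fixed geometry; the factor $\frac{1}{|\log|t||}$ in the exponent only helps, since $e^{-\frac{\alpha}{|\log|t||}\psi}\le e^{-\alpha\psi}$ when $\psi\le 0$. One must check that restricting $\omega_M$-psh functions from $X_t$ to the tube, and comparing with psh functions on the ambient $M$ near $E_J$ (using that $X_t$ is a divisor in $M$ and $\omega_M$ extends), is compatible with a uniform constant; this is where inputs from \cite{LiuniformSkoda}\cite{LiFermat} about uniform capacity/Skoda estimates along degenerations enter.

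The main obstacle, I expect, is making the "uniform in $t$" part genuinely uniform near the degenerate locus: the manifolds $X_t$ become singular in the limit, $E_J$ itself may carry (dlt, hence mild but nonempty) singularities, and the holomorphic volume form $\Omega_t$ has the $d\log|F_i|$ poles, so the measure $\mu_t$ has mild non-smoothness and the domain $X_t$ has shrinking injectivity radius transverse to $E_J$. Controlling the Skoda constant uniformly there requires either (a) a precise model computation on the local transverse model $z_1\cdots z_k=ty$ (as in Example \ref{Fanohypersurfaceeg1}) showing the singular contributions to $\int e^{-\alpha v}d\mu_t$ are bounded by the dlt integrability of $\Omega_{E_J}\wedge\overline\Omega_{E_J}$, or (b) invoking the already-developed uniform pluripotential machinery of \cite{LiuniformSkoda} which was designed precisely to handle such families. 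I would structure the proof to cite \cite{LiuniformSkoda} for the hard uniform capacity estimate and spend the local effort only on verifying that the hypotheses of that theorem are met by the family (\ref{Fanohypersurface}) with the measure $\mu_t$, i.e.\ that $\mu_t$ has the requisite uniform "$\mathcal{A}$-property"/domination by capacity. A secondary technical point worth isolating: one should first reduce to $v$ (equivalently $\psi$) with a definite modulus, e.g.\ via a standard regularization, so that the restriction to the tube and the coordinate change are legitimate, and then pass to the limit; but this is routine compared to the uniform constant issue.
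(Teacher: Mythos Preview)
The paper does not give a proof of this theorem. It is stated as a background result imported from \cite{LiuniformSkoda}; see the sentence immediately preceding it: ``We will recall a few pluripotential theoretic estimates from \cite{LiuniformSkoda}\cite{LiFermat}\cite{LiNA}, which build upon Kolodziej's technique of capacity estimates.'' Theorems~\ref{uniformSkoda}, \ref{uniformLinfty}, and \ref{UniformL1stabilitythm} are all quoted without proof and used as black boxes in Section~\ref{ConvergenceCYpotential}. So there is no in-paper argument to compare your proposal against; your own conclusion that one should ``cite \cite{LiuniformSkoda} for the hard uniform capacity estimate and spend the local effort only on verifying that the hypotheses \ldots\ are met'' is in fact exactly how the paper treats the statement.

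Regarding the sketch itself: the rescaling $\psi=|\log|t||\,v$ and the localization of $\mu_t$ near $E_J$ via (\ref{normalisedCYmeasure}) are the right moves. One step to be careful with is the third bullet. You propose to bound $\int_{X_t} e^{-\frac{\alpha}{|\log|t||}\psi}\,d\mu_t$ by invoking the classical Skoda bound on ``a fixed neighbourhood of $E_J$ inside $M$'', using that $e^{-\frac{\alpha}{|\log|t||}\psi}\le e^{-\alpha\psi}$. But $\psi$ is only $\omega_M$-psh on the hypersurface $X_t$, and there is no general extension of such a function to an $\omega_M$-psh function on the ambient $M$; so the fixed-manifold $\alpha$-invariant estimate does not apply directly. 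The genuinely uniform input---that the Skoda constant does not blow up as the hypersurfaces $X_t$ degenerate and $\mu_t$ concentrates---is precisely what \cite{LiuniformSkoda} supplies, and the $\tfrac{1}{|\log|t||}$ factor in the exponent is essential there rather than merely convenient. Your second paragraph already anticipates this as ``the main obstacle'', and that diagnosis is correct.
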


We write the CY metric on $(X_t, L)$ with respect to the ansatz metric $\omega_t$ (\cf section \ref{Ansatzmetrics}) as \begin{equation}
\omega_{CY,t}=\omega_t+ |\log |t||dd^c \psi_t.
\end{equation}
Since the K\"ahler potential between $\omega_t$ and $\omega_M$ is bounded by $O(|\log |t||)$, in the $L^\infty$ estimate below these two background metrics may be interchanged.

\begin{thm}\label{uniformLinfty}
(Uniform $L^\infty$ bound) \cite{LiuniformSkoda} For $0<|t|\ll 1$, for some appropriate additive normalisation, the potential satisfies $\norm{\psi_t}_{L^\infty}\leq C$ independent of $t$.
\end{thm}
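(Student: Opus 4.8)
The plan is to run Kolodziej's capacity method in the family setting, treating the CY potential $\psi_t$ at fixed small $t$ as the solution of a complex Monge-Amp\`ere equation with right-hand side a probability measure satisfying a uniform Skoda-type exponential integrability bound. Concretely, normalise $\sup_{X_t}\psi_t=0$ and write the CY equation as $(\omega_t+|\log|t||dd^c\psi_t)^n=(L^n)\mu_t$. Since the K\"ahler potential relating $\omega_t$ and $\omega_M$ is $O(|\log|t||)$, the function $v_t:=|\log|t||^{-1}(\phi_t+|\log|t||\psi_t)$ (up to the fixed ambient potential) lies in $PSH(X_t,\frac{1}{|\log|t||}\omega_M|_{X_t})$, so Theorem \ref{uniformSkoda} applies to it and yields $\int_{X_t}e^{-\alpha v_t}d\mu_t\le A$ with $\alpha,A$ independent of $t$. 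This is precisely the density hypothesis needed to feed Kolodziej's machinery with constants uniform in $t$.

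The key steps, in order, would be: (i) rescale the equation by the factor $|\log|t||$ so that it takes the standard normalised form $(\theta_t+dd^c v_t)^n = F_t\,\omega_{ref}^n$ where $\theta_t=\frac{1}{|\log|t||}\omega_M$ is a family of (degenerating) K\"ahler classes of fixed total volume $\frac{(L^n)}{|\log|t||^n}$ --- or, better, keep the class $c_1(L)$ fixed and absorb the degeneration into the measure; (ii) establish a uniform \emph{a priori} comparison between the Monge-Amp\`ere capacity $\mathrm{Cap}_{\omega_t}$ and the reference measure $\mu_t$ of sublevel sets $\{v_t<-s\}$, using the exponential integrability from Step (i) together with the Chebyshev/Skoda argument to get $\mu_t(\{v_t<-s\})\le A e^{-\alpha s}$ and hence $\mu_t(E)\le C\,\mathrm{Cap}_{\omega_t}(E)^2$ (or any superlinear power) with $C$ independent of $t$; (iii) invoke Kolodziej's iteration lemma (in the uniform form of \cite{EGZ}, \cite{Kolodziej1}, \cite{Kolodziej2}, \cite{LiuniformSkoda}) to convert this capacity-measure domination into the bound $\mathrm{osc}_{X_t}\psi_t\le C$, again with $C$ uniform; (iv) upgrade from oscillation to a genuine two-sided $L^\infty$ bound by pinning down the additive constant, e.g. normalising so that $\int_{X_t}\psi_t\,\omega_t^n=0$ or $\sup\psi_t=0$, and using that $\int_{X_t}\omega_t^n=(L^n)$ is $t$-independent.

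The main obstacle I anticipate is uniformity in $t$ of the capacity estimates as the geometry degenerates: the background metric $\omega_M$ restricted to $X_t$ does not degenerate, but the \emph{diameter} and the local geometry near $E_J$ do, and the CY measure $\mu_t$ concentrates on a shrinking neighbourhood of $E_J$ with the $|\log|t||^m$ volume growth built in. One must therefore be careful that all the constants appearing in the Chern-Levine-Nirenberg inequalities, the comparison principle, and the volume-capacity inequality are controlled independently of $t$ --- this is exactly what the uniform Skoda estimate of Theorem \ref{uniformSkoda} is designed to provide, so the proof should largely be a matter of checking that the hypotheses of the uniform Kolodziej argument from \cite{LiuniformSkoda} are met by the present family. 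A secondary technical point is that $\phi_t$ (hence $\omega_t$) is only continuous, not smooth; but by Remark \ref{C0regularisation} one may replace $\omega_t$ by an arbitrarily $C^0$-close genuine K\"ahler metric, which changes the equation by a $C^0$-small perturbation of the measure and does not affect the $L^\infty$ bound.
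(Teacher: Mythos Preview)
The paper does not supply a proof of this statement at all: Theorem~\ref{uniformLinfty} is simply quoted from \cite{LiuniformSkoda} as a black box, alongside the uniform Skoda estimate Theorem~\ref{uniformSkoda}, in the ``pluripotential theory backgrounds'' subsection. There is therefore nothing in the paper to compare your argument against.

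That said, your outline is the correct one and matches the approach of the cited reference: the uniform Skoda inequality (Theorem~\ref{uniformSkoda}) for the rescaled class $\frac{1}{|\log|t||}[\omega_M]$ is exactly the volume--capacity input that makes Ko\l{}odziej's iteration run with constants independent of $t$, and the passage between the backgrounds $\omega_t$ and $\omega_M$ is harmless because $\frac{1}{|\log|t||}\phi_t$ is uniformly bounded. One small slip: you normalise $\sup\psi_t=0$ but then apply Skoda to $v_t$, whose supremum is only \emph{uniformly bounded}, not zero; this is easily fixed by subtracting $\sup v_t$ and absorbing the resulting uniformly bounded factor into $A$. Your remark about regularising $\omega_t$ via Remark~\ref{C0regularisation} is also unnecessary here, since the $L^\infty$ estimate only needs $\omega_t$ to be a positive current with bounded potential, not a smooth metric.
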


We will henceforth fix the additive normalisation so that $\min_{X_t} \psi_t=0$.

We record a general statement concerning K\"ahler metrics whose volume measures are close in total variation norm.

\begin{thm}\label{UniformL1stabilitythm}
	(Uniform $L^1$-stability, rephrasing \cite[Thm 2.6]{LiNA}) Let $(Y,\omega)$ be a compact K\"ahler manifold, and $\phi\in PSH(Y,\omega)\cap C^\infty$.
	Assume
	\begin{itemize}
		\item There is a Skoda estimate
		\[
		\int_Y e^{-\alpha v}  \frac{\omega^n}{\text{Vol}(Y)} \leq A, \quad \forall v\in PSH(Y,\omega) \text{ with } \sup_Y v=0.
		\]

		\item There is a mass lower bound $ \frac{1}{\text{Vol(Y)}}\int_{  \{\phi\leq a\} } \omega^n \geq b>0. $
		
		\item The total variation $ \frac{1}{\text{Vol}(Y)} \int_Y |\omega^n-\omega_\phi^n |\leq \gamma^{2n+3}<1$. 
		
		\item  $\norm{\phi}_{C^0}\leq A'$.
	\end{itemize}
	Then for $0<\gamma<\gamma_0(b, n, \alpha, A, A')\ll 1$, there is a uniform estimate 
	\[
	\sup_Y \phi \leq a+ C(b, n, \alpha, A, A') \gamma.
	\]
\end{thm}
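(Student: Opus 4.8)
The statement is an $L^\infty$-type stability result: if the Monge-Amp\`ere measure of $\phi$ is close to $\omega^n$ in total variation, and $\phi$ already has a nontrivial sublevel set $\{\phi\le a\}$ with definite mass, then $\sup_Y\phi$ cannot be much larger than $a$. The plan is to run a Ko\l odziej-type capacity/De Giorgi iteration, but applied to the \emph{upper} excursion sets, combined with the total variation hypothesis to control the bad part of the measure. I would begin by normalising $\sup_Y\phi=0$ (shifting $a$ accordingly is harmless) and set out to prove that $a\ge -C\gamma$, which is equivalent to the claimed $\sup_Y\phi\le a+C\gamma$ once the normalisation is undone. The key quantities are $f(s)=\left(\frac{1}{\mathrm{Vol}(Y)}\int_{\{\phi<-s\}}\omega_\phi^n\right)^{1/n}$ and the Monge-Amp\`ere capacity $\mathrm{Cap}(\{\phi<-s\})$; the Skoda hypothesis gives, via the standard argument (Ko\l odziej, or \cite[Thm 2.6]{LiNA}), an inequality of the form $\mathrm{Cap}(\{\phi<-s-\delta\})\le \frac{C}{\delta^n}\int_{\{\phi<-s\}}\omega_\phi^n$ together with a comparison $\mathrm{Vol}(\{\phi<-s\})\le \exp(-\alpha' s)$-type decay, so that a self-improving recursion $f(s+\delta)\le \frac{C\delta^{-1}}{}\,f(s)^{?}$ forces $f$ to vanish for $s$ beyond a controlled threshold.

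\textbf{Main steps.} First, I would record the elementary measure-theoretic consequence of the total variation bound: for every Borel set $E$,
\[
\left|\,\frac{1}{\mathrm{Vol}(Y)}\int_E\omega_\phi^n-\frac{1}{\mathrm{Vol}(Y)}\int_E\omega^n\,\right|\le \gamma^{2n+3},
\]
so in particular the $\omega_\phi^n$-mass of $\{\phi<-s\}$ differs from its $\omega^n$-mass by at most $\gamma^{2n+3}$. Second, using the mass lower bound $\frac{1}{\mathrm{Vol}(Y)}\int_{\{\phi\le a\}}\omega^n\ge b$ (rephrased after normalisation as a lower bound on $\{\phi<-s_0\}$ for $s_0=-a$), I would get that $\{\phi<-s\}$ has substantial $\omega_\phi^n$-mass for $s$ slightly below $s_0$ — this is where one must not lose too much, hence the smallness requirement on $\gamma$. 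Third, I would invoke the Skoda estimate (Theorem \ref{uniformSkoda} in its abstract form) to obtain the volume--capacity comparison and then set up the De Giorgi iteration on $g(s)=\mathrm{Cap}(\{\phi<-s\})$, using the standard lemma that if $g(s+\delta)\le C\delta^{-n}g(s)^{1+\varepsilon}$ with $g$ decreasing then $g$ hits $0$ after $s$ increases by $O(g(s_*)^{\varepsilon}\cdot\text{stuff})$. Fourth, I would feed the total-variation estimate back in: the iteration "seed" is $\int_{\{\phi<-s_0\}}\omega_\phi^n$, which by steps one and two is at least $b-\gamma^{2n+3}>0$ but also at most the full mass; the point is that the \emph{gap} between $\sup_Y\phi=0$ and $-a$ is controlled by how fast the iteration from seed size $\sim\gamma^{2n+3}$ decays, yielding $-a\le C\gamma$. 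Throughout, the exponent $2n+3$ in the hypothesis is exactly what is needed so that $(\gamma^{2n+3})^{1/n}$-type losses in the iteration still sum to $O(\gamma)$.

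\textbf{Main obstacle.} The delicate point is the sign/direction bookkeeping: one wants to bound $\sup_Y\phi$ from above, but capacity estimates naturally control the \emph{measure} of sublevel sets $\{\phi<-s\}$, and the link to $\sup_Y\phi$ goes through the observation that if $\sup_Y\phi=0$ were much bigger than $a$, then $\{\phi<a\}=\{\phi-\sup\phi<a-\sup\phi\}$ would have to be a sublevel set at a large depth, which the iteration shows has vanishing $\omega_\phi^n$-mass — contradicting (via total variation) the hypothesis that it has $\omega^n$-mass at least $b$. Making this contrapositive quantitative, i.e. tracking how the threshold depth in the De Giorgi iteration depends on $b,n,\alpha,A,A'$ and showing the resulting bound is linear in $\gamma$ (not merely $o(1)$), is the crux; everything else is the standard Ko\l odziej machinery already packaged in \cite[Thm 2.6]{LiNA}, which I would cite rather than reprove, adapting only the final extraction of the $\sup$ bound from the capacity decay.
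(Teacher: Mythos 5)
The paper does not actually prove this statement: it is quoted as a rephrasing of \cite[Thm 2.6]{LiNA}, so the honest benchmark is the Kolodziej-type \emph{stability} argument proved there. Your sketch is not that argument, and it has a genuine gap at exactly the point carrying the content of the theorem, namely that the final bound is \emph{linear in $\gamma$}. Two concrete problems. First, the bookkeeping of sets and measures is off. In the capacity comparison lemma $\delta^n\,\mathrm{Cap}(\{u<v-s-\delta\})\leq \int_{\{u<v-s\}}\mathrm{MA}(u)$, the measure on the right is that of the function which is \emph{small} on the bad set. Bounding $\sup_Y\phi-a$ means killing the superlevel sets $\{\phi>a+s\}$ (take $u\equiv a$, $v=\phi$), for which the right-hand side is $\int\omega^n$, not $\int\omega_\phi^n$; your iteration on the sublevel sets $\{\phi<-s\}$ with $\int_{\{\phi<-s\}}\omega_\phi^n$ controls the wrong family: sublevel sets at depth $|a|$ are \emph{supposed} to carry mass $\geq b$ by hypothesis, so no iteration can show they have vanishing $\omega_\phi^n$-mass, contrary to the contrapositive in your ``main obstacle'' paragraph. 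Second, and more seriously, the De Giorgi iteration needs a \emph{seed}: a level $s_1\lesssim\gamma$ at which $\mathrm{Cap}(\{\phi>a+s_1\})/\mathrm{Vol}$ is already small (of a definite power of $\gamma$, or at least $o(1)$). Nothing in the hypotheses provides this: the mass bound only gives $\int_{\{\phi>a+s\}}\omega^n\leq(1-b)\mathrm{Vol}$, which implies no capacity smallness at all, and the generic bound $\mathrm{Cap}(\{\phi<\sup\phi-s\})\lesssim 1/s$ gives smallness only at $s=O(1)$. Run correctly, your scheme therefore yields $\sup_Y\phi\leq a+C(b,n,\alpha,A,A')$, an $O(1)$ bound, not $a+C\gamma$; the additive $\gamma^{2n+3}$ error and the $(\gamma^{2n+3})^{1/n}$ losses you budget for are not the bottleneck. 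Your description of the seed is also internally inconsistent (you say it is at least $b-\gamma^{2n+3}$ and simultaneously ``of size $\sim\gamma^{2n+3}$''); a lower bound on the starting value is useless for a decay iteration.

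The linearity in $\gamma$ is precisely what the stability mechanism of \cite[Thm 2.6]{LiNA} (following Kolodziej's stability theorem) supplies, by playing the total-variation smallness against the mass-$b$ set through an auxiliary comparison rather than through the plain $L^\infty$ capacity iteration. Simply invoking \cite[Thm 2.6]{LiNA} and checking that the present statement is a rephrasing of it would be a complete (and intended) proof; but as written you cite that theorem only for the ``standard machinery'', while the step you propose to supply yourself --- extracting a threshold depth that is $O(\gamma)$ from the capacity decay --- is exactly the step your outline cannot deliver. To fix the proposal, either state \cite[Thm 2.6]{LiNA} precisely and perform the reduction, or reproduce its stability argument; the naive iteration alone does not suffice.
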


\subsection{$C^0$-convergence in the generic region}

We first apply the uniform $L^1$-stability theorem \ref{UniformL1stabilitythm} to our main setting, to obtain a weak $L^1$-estimate on the CY potential.

\begin{cor}(Weak $L^1$-convergence estimate)\label{WeakL1convergence}
	Given any small $\delta>0$, then for sufficiently small $t$ depending on $\delta$, we have 
	\[
	\int_{ X_t\cap \{ \psi_t\leq \delta \} } d\mu_t \geq 1-\delta.
	\]
\end{cor}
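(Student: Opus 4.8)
The plan is to apply the uniform $L^1$-stability theorem (Theorem \ref{UniformL1stabilitythm}) to the compact K\"ahler manifold $(X_t, \omega_t)$ — more precisely, to a $C^0$-small smooth regularisation of $\omega_t$ as in Remark \ref{C0regularisation} — with the smooth $\omega_t$-psh potential being $|\log|t|| \psi_t$ (after dividing $\omega_t$ by $|\log|t||$ to normalise the volume, or equivalently rescaling), and to read off the desired mass estimate from the conclusion $\sup \phi \le a + C\gamma$. Concretely, I would first record that the four hypotheses of Theorem \ref{UniformL1stabilitythm} hold uniformly in $t$: the Skoda estimate is Theorem \ref{uniformSkoda}; the $C^0$-bound $\norm{\psi_t}_{C^0} \le C$ is Theorem \ref{uniformLinfty}; the total variation bound $\int_{X_t} |\omega_t^n - \omega_{CY,t}^n| \to 0$ follows from Corollary \ref{totalvariation2} together with $\omega_{CY,t}^n = (L^n)\mu_t$ (the defining CY equation) and the triangle inequality $|\omega_t^n - (L^n)\mu_t| $ going to zero in total variation; and the mass lower bound $\frac{1}{(L^n)}\int_{\{\psi_t \le a\}}\omega_t^n \ge b$ with $a = 0$, $b$ some fixed positive constant, follows from the normalisation $\min_{X_t}\psi_t = 0$ combined with the total variation closeness of $\omega_t^n$ to $\mu_t$ and a weak compactness argument (or, more cheaply, just from the fact that $\psi_t$ is $\omega_t$-psh with bounded oscillation, so $\{\psi_t \le \varepsilon\}$ cannot have arbitrarily small $\omega_t^n$-mass — this is a standard consequence of the Skoda/capacity estimates).

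The second step is the bookkeeping to match the scaling: Theorem \ref{UniformL1stabilitythm} is stated for a fixed K\"ahler class, whereas here the natural normalised background form is $\frac{1}{|\log|t||}\omega_M$ (which is why the Skoda estimate in Theorem \ref{uniformSkoda} is phrased with that factor). I would apply the stability theorem with $\omega$ replaced by $\omega_t$ itself and $\phi = |\log|t||\psi_t$, noting $\omega_{\phi} = \omega_t + |\log|t|| dd^c\psi_t = \omega_{CY,t}$, so that the total variation hypothesis is exactly $\frac{1}{(L^n)}\int_{X_t}|\omega_t^n - \omega_{CY,t}^n| \le \gamma^{2n+3}$, which holds with $\gamma = \gamma(t) \to 0$ by Corollary \ref{totalvariation2}. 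The $C^0$-bound on $\phi = |\log|t||\psi_t$ is $O(|\log|t||)$, which is \emph{not} uniform; this is the point that needs care, and it is exactly why the conclusion must be rescaled back. So instead I would apply the theorem to the normalised data: background $\frac{1}{|\log|t||}\omega_t$, potential $\psi_t$, which is $\frac{1}{|\log|t||}\omega_t$-psh, has $\norm{\psi_t}_{C^0}\le C$ uniformly, and whose MA measure $(\frac{1}{|\log|t||}\omega_t + dd^c\psi_t)^n = |\log|t||^{-n}\omega_{CY,t}^n$ is within total variation $\gamma(t)^{2n+3}$ of $|\log|t||^{-n}\omega_t^n$ after renormalising both to probability measures — these normalisations cancel, so the total variation hypothesis is unchanged.

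The third step: the stability theorem then gives $\sup_{X_t}\psi_t \le a + C\gamma(t)$ where we may take $a$ to be any value admitting a mass lower bound $b$. I would choose $a = a(\delta)$ small enough that, using the mass lower bound argument above, $\frac{1}{(L^n)}\int_{\{\psi_t \le a\}}\omega_t^n \ge b(\delta) > 0$; then stability yields $\sup_{X_t}\psi_t \le a(\delta) + C\gamma(t) < \delta$ for $t$ small. Knowing $\sup\psi_t < \delta$ means $\{\psi_t \le \delta\} = X_t$, which is far stronger than needed — so in fact I suspect the intended (and honest) route is slightly different: one applies stability with a fixed small $a$ and fixed $b$ to conclude $\sup\psi_t \to 0$, hence $\{\psi_t \le \delta\}$ is eventually all of $X_t$, and a fortiori $\mu_t(\{\psi_t \le \delta\}) = 1 \ge 1-\delta$. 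Either way the final line is immediate from $\sup_{X_t}\psi_t \to 0$.

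The main obstacle I anticipate is the \textbf{mass lower bound} hypothesis: verifying $\frac{1}{(L^n)}\int_{\{\psi_t \le a\}}\omega_t^n \ge b$ with constants uniform in $t$. Since $\min_{X_t}\psi_t = 0$ by normalisation, the set $\{\psi_t \le a\}$ is nonempty, but one needs a genuine volume lower bound, not just nonemptiness. The clean way is: $\psi_t$ is $\frac{1}{|\log|t||}\omega_t$-psh with $\sup\psi_t \le C$ (from Theorem \ref{uniformLinfty}), so if $\int_{\{\psi_t \le a\}}\omega_t^n/(L^n)$ were tiny then by the Skoda estimate applied to $v = \psi_t - \sup\psi_t$ one gets a contradiction with $\int e^{-\alpha v}d\mu_t \le A$ — because the complement $\{\psi_t > a\}$ would have to carry almost all the $\mu_t$-mass while $v$ is very negative there relative to... — actually the cleanest is a standard capacity/volume argument: for an $\omega$-psh function with $\sup = 0$, $\min = -C$, the sublevel set $\{\phi < -C + \eta\}$ has positive capacity bounded below in terms of $\eta$, hence positive $\omega^n$-measure bounded below, using the total variation closeness of $\omega_t^n$ to a fixed-structure measure to transfer this. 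I would cite \cite{LiNA} or the Kolodziej-type estimates in \cite{EGZ}\cite{Kolodziej1} for this; it is routine but is the one place where uniformity-in-$t$ must be argued rather than quoted.
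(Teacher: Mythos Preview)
Your approach applies the stability theorem in the \emph{wrong direction}, and this is not cosmetic. You take $\omega$ to be (a regularisation of) the ansatz $\frac{1}{|\log|t||}\omega_t$ and $\phi=\psi_t$, so that $\omega_\phi$ is the normalised CY metric. The paper does the reverse: it takes $\omega=\frac{1}{|\log|t||}\omega_{CY,t}$, $\phi=-\psi_t$, and argues by \emph{contradiction}. Assuming $\mu_t(\{\psi_t\le\delta\})<1-\delta$ gives $\mu_t(\{\psi_t\ge\delta\})=\mu_t(\{\phi\le -\delta\})>\delta$, which is exactly the mass lower bound hypothesis with $a=-\delta$, $b=\delta$. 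The stability theorem then yields $\sup_{X_t}(-\psi_t)\le -\delta+C(\delta)\gamma<0$ for $t$ small, contradicting the normalisation $\min\psi_t=0$.

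The reversal matters for two reasons. First, the Skoda hypothesis in Theorem~\ref{UniformL1stabilitythm} is for the measure $\omega^n/\mathrm{Vol}$; in the paper's orientation this is $\mu_t$, which is precisely what Theorem~\ref{uniformSkoda} provides, whereas in your orientation you would need a uniform Skoda estimate for $\omega_t^n$, which is not stated and does not follow from total-variation closeness (the integrand $e^{-\alpha v}$ is unbounded). Second, and more seriously, the mass lower bound you flag as the ``main obstacle'' genuinely cannot be supplied by the route you sketch. Knowing only $\min_{X_t}\psi_t=0$ and $\|\psi_t\|_{C^0}\le C$ does \emph{not} force the sublevel set $\{\psi_t\le a\}$ to carry a uniform positive fraction of the mass: a psh function can attain its minimum on a set of arbitrarily small measure, and the Skoda estimate applied to $\psi_t-\sup\psi_t$ only says that mass concentrates near $\sup\psi_t$, not near $\min\psi_t$. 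Your proposed capacity argument does not close this gap.

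There is also a self-check you should have made: your route, if it worked, would yield $\sup_{X_t}\psi_t\to 0$ directly, hence $\|\psi_t\|_{C^0}\to 0$, which is the full main theorem. The paper needs the entire Bergman kernel machinery of Section~\ref{ConvergenceCYpotential} to reach that conclusion; Corollary~\ref{WeakL1convergence} is deliberately a \emph{weak} $L^1$ statement, and the contradiction argument is calibrated to deliver exactly that and no more.
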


\begin{proof}
	Suppose the contrary $
	\int_{ X_t\cap \{ \psi\leq \delta \} } d\mu_t < 1-\delta.
	$
	We apply the uniform $L^1$-stability theorem, with $Y=X_t$ and
	\[
	\omega=\frac{1}{|\log |t||}\omega_{CY,t},\quad \omega_\phi= \frac{1}{|\log |t||}\omega_t, \quad \phi=-\psi_t.
	\]
	(Strictly speaking, we are using a $C^\infty$-regularised version of $\omega_t$ instead of $\omega_t$, with arbitrarily $C^0$-close potentials, see Remark \ref{C0regularisation}. We shall suppress this regularisation procedure to make the arguments more transparent.)

	Thus $\frac{\omega^n}{\text{Vol}(Y)}=\mu_t $ is the normalised CY measure, which satisfies the uniform Skoda estimate by Thm \ref{uniformSkoda}, \ref{uniformLinfty}. 
	The $\norm{\phi}_{C^0}$ bound is Thm \ref{uniformLinfty}.
	The mass lower bound  with $a=-\delta$ and $b=\delta$ holds by the contradiction hypothesis. We choose $\gamma \ll C(b,n,\alpha,A,A')\delta$. The total variation norm can be made to be $\ll \gamma^{2n+3}$ by Cor. \ref{totalvariation2}, for sufficiently small $t$.
	The uniform $L^1$-stability Theorem then gives
	\[
	\sup_{X_t} (-\psi_t) \leq -\delta+ C(b,n,\alpha,A,A')\gamma \leq -\frac{\delta}{2},
	\]
	which contradicts our normalisation that $\min_{X_t} \psi_t=0$.
\end{proof}

We shall improve the weak $L^1$-convergence estimate to a $C^0_{loc}$-convergence on a 
concrete subset  lying close to $E_J$ inside $X_t\subset M$, with a nontrivial proportion of the CY measure.

We pick a chart on $E_J$ with complex coordinates $z_{m+1},\ldots z_n$, bounded away from $\{ F=0 \}$, and pick a local trivialising section of $L$, so that $F_0,\ldots, F_m$ can be identified with coordinates $z_0,\ldots z_m\in \C^*$. The equation $F_0\ldots F_m=-tF$ just means $z_0$ can be solved in terms of the other local coordinates, so the local picture of $X_t$ is biholomorphic to a subset of $B(2)^{n-m}\times (\C^*)^m$. We will focus on the region inside $B(1)^{n-m}\times (\C^*)^m$,
 \begin{equation}\label{C0locsubset}
 \frac{-\log |z_i|}{ |\log |t||  }\gtrsim \epsilon , \quad i=0, 1,\ldots m. 
 \end{equation}

\begin{rmk}
	The notation $ a\gtrsim b$ means $a\geq C^{-1}b$ with $C$ independent of $t$. This is intended to suppress some unpleasant constants arising from shrinking domains slightly.
\end{rmk}

\begin{prop}\label{C0locconvergence}
($C^0_{loc}$-potential convergence estimate)
Given any small $\epsilon$, then for sufficiently small $t$ depending on $\epsilon$, we have $\psi_t\leq C\epsilon$ on the subset (\ref{C0locsubset}).
\end{prop}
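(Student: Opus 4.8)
The goal is to upgrade the weak $L^1$-convergence estimate of Corollary \ref{WeakL1convergence} to a genuine $C^0$ bound $\psi_t\le C\epsilon$ on the explicit region (\ref{C0locsubset}), which sits in a fixed polydisc chart $B(1)^{n-m}\times(\C^*)^m$ near $E_J$. The plan is to combine the weak estimate with a \emph{local} sub-mean-value / capacity argument for the $\omega_{CY,t}$-psh function $\psi_t$, exploiting the fact that on the region (\ref{C0locsubset}) the rescaled metric $\frac{1}{|\log|t||}\omega_M$ is uniformly small, so $\psi_t$ is (almost) plurisubharmonic in the usual sense after rescaling.

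\textbf{Step 1: Set-up and rescaling.} Work in the chart with coordinates $(z_0,\dots,z_m)\in(\C^*)^m$ (after solving $z_0$ in terms of the rest via $F_0\cdots F_m=-tF$) and $(z_{m+1},\dots,z_n)\in B(2)^{n-m}$. Introduce logarithmic coordinates $\xi_i=\frac{-\log|z_i|}{|\log|t||}$ on the $(\C^*)^m$-factor, so that the region (\ref{C0locsubset}) becomes $\{\xi_i\gtrsim\epsilon\}$. On this region the potential $\phi_t$ of the ansatz is smooth, with $C^2$-bounds controlled (away from the walls) by Lemma \ref{uregularity} and Proposition \ref{totalvariation1}; more importantly, $\frac{1}{|\log|t||}\omega_M$ has potential of size $O(1/|\log|t||)$ in these coordinates. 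Thus $\psi_t$ is, up to an $O(1/|\log|t||)$ additive correction, a plurisubharmonic function on the rescaled domain, and $\int_{X_t}\psi_t\,d\mu_t$ is controlled from Corollary \ref{WeakL1convergence} together with the uniform $L^\infty$ bound of Theorem \ref{uniformLinfty}.

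\textbf{Step 2: From weak $L^1$ to pointwise via sub-mean-value.} Fix a point $x$ in the region (\ref{C0locsubset}) (slightly shrunk, absorbed in the $\gtrsim$ notation). Around $x$ there is a polydisc, of definite size in the rescaled coordinates, on which $\psi_t$ is $\omega_{CY,t}$-psh and on which $\mu_t$ has a definite lower density — this is exactly where one uses that near $E_J$ the normalised CY measure $\mu_t$ is comparable to the Lebesgue-type measure $\frac{1}{|\log|t||^m}\bigwedge d\log|F_i|\wedge d^c\log|F_i|$ tensored with $\Omega_{E_J}\wedge\overline\Omega_{E_J}$ (Example \ref{Fanohypersurfaceexample2}, formula (\ref{normalisedCYmeasure})), so that $d\mu_t$ dominates a fixed constant times Lebesgue measure $d\xi\,dz_{m+1}\cdots dz_n$ on a polydisc of definite size. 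The sub-mean-value inequality for the (rescaled, corrected) psh function $\psi_t$ then gives
\[
\psi_t(x)\ \le\ \frac{1}{\mathrm{vol}}\int_{P(x)}\psi_t\,d(\text{Lebesgue})+\frac{C}{|\log|t||}\ \le\ C\int_{X_t}\psi_t^{+}\,d\mu_t+\frac{C}{|\log|t||}.
\]
By Corollary \ref{WeakL1convergence}, $\int_{X_t}\psi_t^{+}\,d\mu_t\le \delta+C\cdot\mu_t(\{\psi_t>\delta\})\le C\epsilon$ for suitable $\delta=\delta(\epsilon)$ and $t$ small; together with the $O(1/|\log|t||)$ term this yields $\psi_t(x)\le C\epsilon$ for $t$ small enough depending on $\epsilon$.

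\textbf{Main obstacle.} The delicate point is Step 2: making the sub-mean-value argument uniform in $t$ on the region (\ref{C0locsubset}). Two issues need care. First, the walls $\cup_k\partial\bar\Delta^\vee_k$ where $u$ fails to be smooth, and the deleted measure-zero sets of Lemma \ref{uregularity}, could a priori intersect every polydisc; this is handled by noting that $\psi_t$ itself is genuinely $\omega_{CY,t}$-psh regardless of the ansatz's regularity, so the mean-value inequality only needs the smallness of the background form $\frac{1}{|\log|t||}\omega_M$, not any regularity of $u$. Second, one must ensure the polydisc radius and the lower density of $\mu_t$ are both bounded below independently of $t$ uniformly over (\ref{C0locsubset}) — this is precisely why the region is cut out by the scale-invariant condition $\frac{-\log|z_i|}{|\log|t||}\gtrsim\epsilon$, keeping us a definite log-distance away from all the divisors $\{F_i=0\}$ and from $\{F=0\}$, so that the asymptotic formula (\ref{normalisedCYmeasure}) for $\mu_t$ applies with uniform constants. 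Once these uniformity points are secured, the estimate follows by combining the local inequality with Corollary \ref{WeakL1convergence}.
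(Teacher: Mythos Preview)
Your overall strategy --- feed Corollary \ref{WeakL1convergence} into a local mean-value argument for a psh function --- is exactly the paper's, but there is a genuine error in Step 1/Step 2 that would make the argument as written fail.

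You assert that ``$\psi_t$ is, up to an $O(1/|\log|t||)$ additive correction, a plurisubharmonic function,'' because the background form is $\frac{1}{|\log|t||}\omega_M$. This is not right: by definition $\omega_{CY,t}=\omega_t+|\log|t||\,dd^c\psi_t$, so $\psi_t$ is $\frac{1}{|\log|t||}\omega_t$-psh, not $\frac{1}{|\log|t||}\omega_M$-psh. The difference $\frac{1}{|\log|t||}(\omega_t-\omega_M)=dd^c\bigl(\tfrac{1}{|\log|t||}\phi_t\bigr)=dd^c u(x)$ has potential $u$, which is $O(1)$, not $O(1/|\log|t||)$; your ``Main obstacle'' paragraph repeats this mistake when it says the mean-value step ``only needs the smallness of $\frac{1}{|\log|t||}\omega_M$, not any regularity of $u$.'' The fix is to apply the mean-value inequality not to $\psi_t$ but to the \emph{full local absolute potential} $\varphi=\phi_h+\phi_t+|\log|t||\psi_t$, which is honestly psh. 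On a region where $x$ varies by $O(\epsilon)$ the Lipschitz bound on $u$ gives $|\tfrac{1}{|\log|t||}\phi_t-u(y)|\lesssim\epsilon$, so the correction when passing back to $\psi_t$ is $O(\epsilon)$, not $O(1/|\log|t||)$. That is harmless for the final estimate, but it is how the argument has to go.

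Two smaller points. First, the averaging polydisc has $\mu_t$-measure of order $\epsilon^m$, so the constant multiplying $\int_{X_t}\psi_t^+\,d\mu_t$ in your displayed inequality is really $C\epsilon^{-m}$; to land at $C\epsilon$ one must take $\delta\le\epsilon^{m+1}$ in Corollary \ref{WeakL1convergence} (the paper does this explicitly). Second, the paper organises the mean-value step slightly differently: it first averages $\varphi$ over $B(2)^{n-m}\times T^m$ to obtain a \emph{convex} function $\bar\varphi$ of $\log|z_1|,\ldots,\log|z_m|$, uses convexity to pass from the local $L^1$ bound to $L^\infty$ on $\bar\varphi$, and then uses that $\bar\varphi$ is itself an average of the psh $\varphi$ together with the almost non-negativity $\frac{1}{|\log|t||}\varphi\ge -C\epsilon$ to get the pointwise bound. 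Your one-shot polydisc mean-value works too once the psh function is correctly identified, but the paper's two-step version sidesteps the background-form confusion entirely.
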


\begin{proof}
We will work with local K\"ahler potentials in the coordinate charts, and exploit the psh property through the mean value inequality.

Using the local trivialisation of $L$, we can write $h= e^{-2\phi_h}$, so the CY metric $\omega_{CY,t}$ on $X_t$ has local absolute potential
\[
\varphi= \phi_h+ \phi_t+ |\log |t||\psi_t,\quad  |\phi_h|\leq C.
\]
Let $\bar{\varphi}$ be the average of $\varphi$ over the $z_{m+1},\ldots z_n$ variables, and over the angles in the $T^m$ directions:
\[
\bar{\varphi}= \dashint_{B(2)^{n-m}}\dashint_{T^m} \varphi(z_1e^{\sqrt{-1}\theta_1},\ldots z_me^{\sqrt{-1}\theta_m}, z_{m+1},\ldots z_n). 
\]
 Thus $\bar{\varphi}$ is a function of $\log |z_1|,\ldots \log |z_m|$, and it must be convex because averaging preserves psh property.

Around a given value
\[
y=(y_0,\ldots y_m),\quad \sum_0^m y_i=1,\quad  y_i\gtrsim \epsilon, 
\]
we consider the annular subregion of (\ref{C0locsubset}) with
\[
| \frac{-\log |z_i| }{|\log |t||} -y_i|\lesssim \epsilon,\quad i=0,\ldots m.
\]
Within this region, recalling that $\phi_t=|\log |t||u(x_0,\ldots x_m)$ with $x_i=-\frac{\log |F_i|_{h^{\otimes d_i}}}{|\log |t||}$, the Lipschitz bound on $u$ implies the small oscillation estimate
\begin{equation}\label{smalloscillation}
|\frac{1}{|\log |t||} \phi_t-  u(y  )|\lesssim \epsilon.
\end{equation}
In our normalisation $\psi_t\geq 0$, so \[
\frac{1}{|\log |t||}\varphi= \frac{1}{|\log |t||}\phi_h+\frac{1}{|\log |t||} \phi_t+\psi_t \geq -C\epsilon.
\]

We now apply Cor. \ref{WeakL1convergence} with $\delta \leq \epsilon^{m+1}$, so that
$
\psi_t\leq \epsilon 
$
except on a subset with normalised CY measure $\leq \epsilon^{m+1}$.
Adding up the three terms in $\varphi$, we see that for sufficiently small $t$ depending on $\epsilon$,
\[
\begin{split}
& \int_{ | \frac{-\log |z_i| }{|\log |t||}- y_i|\lesssim \epsilon   }  |\frac{1}{|\log |t||}\varphi- u( y )| d\mu_t 
\\
& \lesssim \epsilon  \int_{ | \frac{-\log |z_i| }{|\log |t||} -y_i|\lesssim \epsilon   } d\mu_t+  \epsilon^{m+1} \norm{\psi_t}_{L^\infty} \lesssim \epsilon^{m+1}.
\end{split}
\]
Upon averaging, we conclude the local average $L^1$ upper bound on $\bar{\varphi}$,
\[
\dashint_{ | \frac{-\log |z_i| }{|\log |t||} -y_i|\lesssim \epsilon   } |\frac{1}{|\log |t||}\bar{\varphi} - u (y   ) | \lesssim \epsilon.
\]
Since $\bar{\phi}$ is a convex function, the average $L^1$-upper bound implies an $L^\infty$ upper bound after slightly shrinking the domain, namely 
\[
\frac{1}{|\log |t||}\bar{\varphi} - u (y   ) \lesssim \epsilon.
\]
But $\bar{\varphi}$ is itself obtained by averaging the psh function $\varphi$, so by the mean value inequality, and by the almost non-negativity $\frac{1}{|\log |t||}\varphi\geq -C\epsilon$, we deduce that on a slightly shrunk domain,
\[
\frac{1}{|\log |t||}\varphi - u (y ) \lesssim \epsilon, \quad \text{for } | \frac{-\log |z_i| }{|\log |t||} -y_i|\lesssim \epsilon.
\]

Unravelling the definition of $\varphi$,  we obtain
\[
\begin{split}
&\psi_t \leq  \frac{1}{|\log |t||}\varphi-  \frac{1}{|\log |t||}\phi_t-  \frac{1}{|\log |t||}\phi_h
\\
& \leq u(y) - u(y) + \frac{C}{|\log |t||}+ C\epsilon \leq C\epsilon.
\end{split}
\]
Since the region (\ref{C0locsubset}) is covered by these small charts, the claim follows.
\end{proof}

\subsection{Bergman kernel estimates}

Our goal is to improve the potential convergence from the generic region to the whole of $X_t$. The difficulty is that the complex structure is quite degenerate on the nongeneric regions of $X_t$, and the ansatz potential $\phi_t$ oscillates quite drastically, so that a simple application of the mean value inequality does not seem to suffice. Instead, we will proceed via Bergman kernel estimates.

\begin{prop}\label{OhsawaTakegoshi}
For any sufficiently large $l$, and
for any $z\in X_t$, there is a section $s\in H^0(X_t, lL)$ with $|s|_{h^{\otimes l}}(z)=1$, and
\[
\frac{1}{2l}\log \int_{X_t} |s|^2_{h^{\otimes l }} e^{ -2l (\phi_t+ |\log |t||\psi_t)  } \sqrt{-1}^{n^2}\Omega_t \wedge \overline{\Omega}_t \leq  -(\phi_t+ |\log |t||\psi_t)   (z) + \frac{C}{l} |\log |t||.
\]	
All the constants are uniform for small $t$.
\end{prop}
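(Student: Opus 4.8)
The plan is to invoke the Ohsawa--Takegoshi $L^2$-extension theorem on the hypersurface $X_t$, applied to the Hermitian line bundle $(L^{\otimes l}, h^{\otimes l}e^{-2l(\phi_t+|\log|t||\psi_t)})$, which we must first arrange to have a positive (or at least mildly negative, controllably small) curvature current so that the hypotheses of a uniform extension theorem are met. The CY metric $\omega_{CY,t}=\omega_t+|\log|t||dd^c\psi_t$ is a genuine K\"ahler metric in $c_1(L)$, so its associated Hermitian metric $h^{\otimes l}e^{-2l(\phi_t+|\log|t||\psi_t)}$ (up to the $C^0$-bounded $\phi_h$ ambiguity from the local trivialisation) has curvature $l\,\omega_{CY,t}>0$. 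Thus the twisted line bundle is genuinely positive, and there is no curvature obstruction; the only subtlety is that $\phi_t$ is merely continuous, which is handled by the Remark \ref{C0regularisation} $C^0$-small regularisation, whose $L^2$-norms change by a factor $e^{O(l)}$, absorbed into the $\frac{C}{l}|\log|t||$ error.

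First I would, given $z\in X_t$, consider the evaluation functional at $z$ for sections of $lL$ on $X_t$; by an Ohsawa--Takegoshi / Skoda-type $L^2$-estimate (for instance extending from the zero-dimensional submanifold $\{z\}\subset X_t$, or more efficiently working with the Bergman kernel of the weighted space directly) there exists $s\in H^0(X_t,lL)$ with $s(z)\neq 0$ and
\[
\int_{X_t} |s|^2_{h^{\otimes l}} e^{-2l(\phi_t+|\log|t||\psi_t)}\,\sqrt{-1}^{\,n^2}\Omega_t\wedge\overline{\Omega}_t \;\le\; C_l\, |s|^2_{h^{\otimes l}}(z)\, e^{-2l(\phi_t+|\log|t||\psi_t)(z)}\,\frac{1}{\rho(z)},
\]
where $\rho(z)$ is a local weight measuring the density of the CY volume form relative to a fixed background; the constant $C_l$ depends polynomially on $l$ and is uniform in $t$ because the curvature $l\,\omega_{CY,t}$ is positive and the geometry is controlled (one uses the uniform $L^\infty$ bound on $\psi_t$ from Theorem \ref{uniformLinfty} and the volume asymptotics of section \ref{sectionessentialskeleton} to control $\rho$). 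After rescaling $s$ so that $|s|_{h^{\otimes l}}(z)=1$, taking $\frac{1}{2l}\log$ of both sides converts the multiplicative constant $C_l/\rho(z)$ into an additive term of size $\frac{1}{2l}\log(C_l/\rho(z))$. The point is that $C_l$ grows at most polynomially in $l$ while $\rho(z)^{-1}$ is bounded by $e^{O(|\log|t||)}$ (the CY volume form degenerates like a power of $|\log|t||$ times local factors on the hypersurface), so $\frac{1}{2l}\log(C_l/\rho(z))\le \frac{C}{l}|\log|t||$, giving exactly the stated inequality $\le -(\phi_t+|\log|t||\psi_t)(z)+\frac{C}{l}|\log|t||$.

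The main obstacle I anticipate is making the constant in the Ohsawa--Takegoshi estimate \emph{uniform in $t$}: as $t\to0$ the manifolds $X_t$ degenerate, the CY metric $\omega_{CY,t}$ collapses (volume $O(|\log|t||^m)$), and naive extension constants could blow up. The resolution is that uniformity is needed only after the $\frac{1}{2l}\log$, and only up to the permitted $\frac{C}{l}|\log|t||$ slack; so rather than a sharp constant one needs a crude but explicit control of how the extension constant depends on the lower bound of the curvature and on $\operatorname{Vol}(X_t,\omega_{CY,t})$, both of which are $e^{O(|\log|t||)}$. Concretely I would either (i) cite a version of the $L^2$-extension theorem with effective constants depending only on a positive lower curvature bound (the curvature here is $l\,\omega_{CY,t}\ge 0$, which suffices for extension from a point after adding a small positive multiple of $\omega_M$), and track the $|\log|t||$-dependence of the global volume factor, or (ii) work instead with a partition-of-unity $\bar\partial$-solution using H\"ormander's estimate on a small ball around $z$ in a suitable coordinate chart adapted to the local model of $X_t$ near $E_J$, where the weight $\phi_t+|\log|t||\psi_t$ is plurisubharmonic and the H\"ormander constant is controlled by the size of the ball; the global cutoff contributes the volume factor, again $e^{O(|\log|t||)}$. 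Either route reduces the claim to the elementary observation that $\log$ of an $e^{O(|\log|t||)}$ quantity, divided by $2l$, is $\le \frac{C}{l}|\log|t||$; the remaining bookkeeping (the $C^0$-bounded $\phi_h$ term, the regularisation of $\phi_t$, the normalisation $|s|_{h^{\otimes l}}(z)=1$) is routine.
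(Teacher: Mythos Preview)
Your proposal is correct and follows essentially the same strategy as the paper: apply Ohsawa--Takegoshi extension from the point $z\in X_t$ with the positively curved weight coming from the CY metric, and observe that the extension constant together with the local volume/transversality factor is at worst $e^{O(|\log|t||)}$, which becomes $\frac{C}{l}|\log|t||$ after taking $\frac{1}{2l}\log$. The paper makes this concrete by reserving $N_0$ copies of $L$ with the fixed background metric $h$ and using the global sections $\tau_i\in H^0(M,N_0L)$ (arranged to have common zero exactly at $z$) as the cutting functions, so that the transversality factor $|\Lambda^n(d\tau)/\Omega_t|^{-2}_{h^{\otimes nN_0}}(z)\le e^{C|\log|t||}$ reduces to a direct local comparison of the Fubini--Study and Calabi--Yau volume forms, while the discrepancy between weighting $l$ versus $l-N_0$ copies is absorbed using the $L^\infty$ bound $|\phi_t+|\log|t||\psi_t|\le C|\log|t||$; your more abstract appeal to an effective extension constant or a local H\"ormander argument would achieve the same end.
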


\begin{proof}
The idea is to apply the Ohsawa-Takegoshi extension theorem to produce some section of $lL+ K_{X_t}$ with norm control, by extending from the given point $z\in X_t$ to $X_t$. We select the positive metric on the line bundle $lL$ as a weighted combination of the CY metric $h e^{ -2(\phi_t+ |\log |t||\psi_t) } $ and the background metric $h$,
\[
h^{\otimes N_0} (h e^{ -2(\phi_t+ |\log |t||\psi_t) } ) ^{\otimes (l-N_0) }= h^{\otimes l}e^{ -2(l-N_0) (\phi_t+ |\log |t||\psi_t) },
\]
where $N_0$ is a large fixed integer independent of $l$, and $\tau_0,\ldots \tau_{N_1}\in H^0(M, N_0 L)$ induces a projective embedding of $M$, with $|\tau_i|_{h^{N_0}}\leq 1$. By taking linear combinations of constant order we can arrange $\tau=(\tau_1, \ldots \tau_{N_1})$ to have common zero precisely at $z\in X_t$, and the magnitudes are globally bounded $|\tau_i|_{h^{N_0}}\leq C$, and there is a quantitative transversality condition at $z\in X_t$,
\[
| \frac{ \Lambda^n (d\tau) }{  \Omega_t} |_{h^{\otimes nN_0}}(z) \geq  e^{-C |\log |t||}.
\] 
Here $\Lambda^nd\tau \in \Lambda^n \C^{N_1} \otimes (L^{\otimes n N_0}\otimes K_{X_t})|_z$, 
and 
the quantitive transversality amounts to estimating the ratio between the Fubini-Study volume form on $X_t$ with the CY volume form $\sqrt{-1}^{n^2}\Omega_t\wedge \overline{\Omega}_t$, which is a local computation.

The Ohsawa-Takegoshi extension theorem then produces a section $s\in H^0(X_t, lL)$ (equivalently a section $s\otimes \Omega_t\in H^0(X_t, lL+K_{X_t})$),
\[
\begin{split}
& \int_{X_t} |s|^2_{h^{\otimes l}} e^{ -2(l-N_0) (\phi_t+ |\log |t||\psi_t)  } \sqrt{-1}^{n^2}\Omega_t \wedge \overline{\Omega}_t \\
& \leq C  |s|^2_{h^{\otimes l}}(z) e^{- 2(l-N_0) (\phi_t+ |\log |t||\psi_t)(z)  } | \frac{ \Lambda^n (d\tau) }{  \Omega_t} |^{-2}_{h^{ \otimes nN_0 }}(z)
\\
& \leq  |s|^2_{h^{\otimes l}}(z) e^{- 2(l-N_0) (\phi_t+ |\log |t||\psi_t)(z)  } e^{C|\log |t||}.
\end{split}
\]
Now by the $L^\infty$ bound on the CY potential (\cf Thm. \ref{uniformLinfty}), 
\[
|\phi_t+ |\log |t||\psi_t |\leq C|\log |t||, 
\]
hence 
\[
\int_{X_t} |s|^2_{h^{\otimes l}} e^{ -2l (\phi_t+ |\log |t||\psi_t)  } \sqrt{-1}^{n^2}\Omega_t \wedge \overline{\Omega}_t \leq  (|s|^2_{h^{\otimes l}} e^{- 2l (\phi_t+ |\log |t||\psi_t)  }) (z) e^{C|\log |t||}
\]
and taking logarithm proves the claim.
\end{proof}

We will try to replace the section $s$ by monomial sections, by imitating the non-archimedean case (\cf section \ref{Decompositionofsections}). Recall that we have chosen the subspaces $V_k\subset H^0(M, kL)$ mapping isomorphically to $H^0(E_J, kL)$ upon restriction. For sufficiently small $t$, we can decompose any section $s\in H^0(X_t, lL)$ uniquely as
\begin{equation}\label{decompositions}
s= \sum F_0^{l_0}\ldots F_m^{l_m} s_{l_0,\ldots l_m},
\end{equation}
where $l_0,\ldots l_m$ are non-negative integers with at least one zero entry, $d_0l_0+\ldots d_ml_m\leq l$, and $s_{l_0,\ldots l_m}\in V_{l-\sum d_kl_k}$.

The archimedean counterpart of the ultrametric inequality is the following triangle inequality:

\begin{lem}\label{triangle}
There is a pointwise upper bound uniform for small $t$:
\[
|s|_{h^{\otimes l}}\leq C(l) \max_{l_0,\ldots l_m} |F_0|_{h^{\otimes d_0}}^{l_0}\ldots |F_m|_{h^{\otimes d_m}}^{l_m} \norm{s_{l_0\ldots l_m} }_{V_{ l-\sum d_il_i }} ,
\]
where $\norm{\cdot}_{ V_k  }$ is a fixed norm on the finite dimensional space $V_k$ independent of $t$.
\end{lem}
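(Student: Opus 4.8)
The plan is to prove the pointwise bound $|s|_{h^{\otimes l}}\leq C(l)\max |F_0|^{l_0}_{h^{\otimes d_0}}\cdots|F_m|^{l_m}_{h^{\otimes d_m}}\|s_{l_0\ldots l_m}\|_{V_{l-\sum d_il_i}}$ by a compactness argument, treating the decomposition \eqref{decompositions} as a finite sum of sections of $lL$ on $X_t$ and estimating each summand uniformly. First I would fix $l$; since $d_0l_0+\ldots+d_ml_m\leq l$ and the $l_i$ are non-negative integers, there are only finitely many admissible tuples $(l_0,\ldots,l_m)$, so it suffices to bound each $|F_0^{l_0}\cdots F_m^{l_m}s_{l_0\ldots l_m}|_{h^{\otimes l}}$ pointwise by $C(l)|F_0|^{l_0}_{h^{\otimes d_0}}\cdots|F_m|^{l_m}_{h^{\otimes d_m}}\|s_{l_0\ldots l_m}\|_{V_{l-\sum d_il_i}}$ and then absorb the number of tuples into the constant. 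Because $h$ is a fixed smooth metric on $L\to M$ and $\|\cdot\|_{V_k}$ is a fixed norm on a finite-dimensional space, for each fixed admissible tuple the quantity $\sup_M|s_{l_0\ldots l_m}|_{h^{\otimes(l-\sum d_il_i)}}/\|s_{l_0\ldots l_m}\|_{V_{l-\sum d_il_i}}$ is finite by equivalence of norms on $V_{l-\sum d_il_i}$, giving a constant $C(l)$ independent of $t$; multiplying by the pointwise identity $|F_0^{l_0}\cdots F_m^{l_m}|_{h^{\otimes(\sum d_il_i)}}=\prod|F_i|^{l_i}_{h^{\otimes d_i}}$ yields the bound for a single tuple.

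The only subtlety is the dependence on $t$: the decomposition \eqref{decompositions} lives on $X_t$, whereas the norm control is most naturally phrased on $M$ via the $V_k$. To handle this cleanly I would note that $s_{l_0\ldots l_m}$ is (the restriction to $X_t$ of) an element of $V_{l-\sum d_il_i}\subset H^0(M,(l-\sum d_il_i)L)$, so one estimates directly on $M$ where nothing depends on $t$, and then restricts. The decomposition itself exists and is unique for small $t$ by the argument already given in Section~\ref{Decompositionofsections} (the $t$-independent direct sum $H^0(M,lL)=\tilde W_l\oplus F_0\cdots F_mH^0(M,(l-\sum d_i)L)$ together with $F_0\cdots F_m=-tF$ on $X$, iterated and completed), so the coefficients $s_{l_0\ldots l_m}$ are well defined; their sizes relative to $s$ are not needed for this lemma, only the triangle inequality $|s|\leq\sum_{(l_0,\ldots,l_m)}|F_0^{l_0}\cdots F_m^{l_m}s_{l_0\ldots l_m}|$ for the usual Hermitian absolute value, which is just the triangle inequality in $\C$ applied fibrewise after trivialising $L$.

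I do not expect a genuine obstacle here; the statement is essentially the archimedean shadow of the ultrametric inequality used in the NA setting, but with an extra multiplicative constant $C(l)$ replacing the sharp ultrametric bound, precisely because the triangle inequality for $|\cdot|$ on $\C$ costs a factor equal to the number of terms, and equivalence of norms on the finite-dimensional $V_k$ costs a further $l$-dependent (but $t$-independent) constant. The one point to state carefully is the uniformity in $t$: since the metric $h$, the sections $F_i$, the basis of $V_k$, and the norm $\|\cdot\|_{V_k}$ are all fixed once and for all on the ambient $M$, and the estimate is carried out on $M$ before restriction to $X_t\subset M$, every constant that appears is manifestly independent of $t$ for $0<|t|\ll 1$; this is what I would emphasise at the end of the proof.
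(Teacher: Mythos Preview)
Your argument is correct and is exactly the routine verification the paper has in mind; the paper itself states the lemma without proof, calling it ``the archimedean counterpart of the ultrametric inequality'', and your proof---finitely many tuples, the ordinary triangle inequality, multiplicativity of the Hermitian norm, and equivalence of norms on the finite-dimensional $V_k$ carried out on the ambient $M$---supplies precisely the missing details with the right emphasis on $t$-independence.
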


The archimedean counterpart of the `no cancellation property' (\cf Cor. \ref{logsonessentialskeleton}) comes from a more subtle \emph{almost orthogonality} property. We work in the same coordinate setup $B(1)^{n-m}\times (\C^*)^m$ as Prop. \ref{C0locconvergence}. Around any given value
\begin{equation}\label{domainy}
y=(y_0,\ldots y_m), \quad \sum_0^m y_i=1,\quad y_i\gtrsim \epsilon, 
\end{equation}
 we consider the small annulus subregion of (\ref{C0locsubset}),
\[
U_{y,t}=\{ y_i |\log |t|| \leq -\log |z_i| \leq y_i |\log |t||+1, \forall 1\leq i\leq m \} \subset B(1)^{n-m}\times (\C^*)^m,
\]
and define the inner product on $H^0(X_t, kL)$
\[
\norm{s}_{y,t}^2=\int_{U_{y,t}} |s|_{h^{\otimes l}}^2  \sqrt{-1}^{n^2} \Omega_t\wedge \overline{\Omega}_t.
\]
We can arrange $U_{y,t}$ to be symmetric under the $T^m$ action rotating $z_0,\ldots z_m$ but fixing their product.

\begin{lem}\label{almostorthogonality1}
(Almost orthogonality I)
Let $t$ be sufficiently small depending on $\epsilon, l$. 
Suppose $s= F_0^{l_0}\ldots F_m^{l_m} \sigma$ with $\sigma \in V_{l-\sum d_kl_k}$, where $l_0,\ldots l_m$ are non-negative integers with at least one zero entry, $d_0l_0+\ldots d_ml_m\leq l$, and $\sigma\in V_{l-\sum d_kl_k}$. Then
\[
C(l)^{-1} \norm{\sigma}_{V_{l-\sum d_kl_k  }} \leq |t|^{ -\sum_0^m l_iy_i}\norm{s}_{  y,t }\leq C(l)  \norm{\sigma}_{V_{l-\sum d_kl_k  }}.
\]
\end{lem}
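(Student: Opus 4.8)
\textbf{Proof strategy for Lemma \ref{almostorthogonality1}.}

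The plan is to compute the norm $\norm{s}_{y,t}^2$ by freezing the base coordinates and passing to the product-torus picture, exploiting the $T^m$-symmetry of $U_{y,t}$. On the region (\ref{C0locsubset}), the local picture of $X_t$ is a subset of $B(1)^{n-m}\times(\C^*)^m$ with $z_0$ solved via $z_0 z_1\cdots z_m = -tF$, and up to $O(\tfrac{1}{|\log|t||})$ errors the holomorphic volume form is $\Omega_t\approx \Omega_{E_J}\wedge d\log F_1\wedge\ldots\wedge d\log F_m$ (\cf Example \ref{Fanohypersurfaceexample2}). First I would write $|s|^2_{h^{\otimes l}} = |F_0|_{h^{\otimes d_0}}^{2l_0}\cdots|F_m|_{h^{\otimes d_m}}^{2l_m}\,|\sigma|^2_{h^{\otimes(l-\sum d_kl_k)}}$, and observe that on $U_{y,t}$ the Hermitian weights $|F_i|_{h^{\otimes d_i}}$ differ from the unweighted magnitudes $|z_i|$ only by the bounded factor $e^{-d_i\phi_h}$, which is uniformly bounded above and below. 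Hence $|F_i|_{h^{\otimes d_i}}^{2l_i}$ is comparable to $|z_i|^{2l_i}$, which on $U_{y,t}$ lies in the dyadic range $[\,|t|^{2l_iy_i}e^{-2l_i},\,|t|^{2l_iy_i}\,]$ — i.e. is $|t|^{2l_iy_i}$ up to a factor depending only on $l$ (since $\sum d_kl_k\le l$ bounds each $l_i$). Pulling this factor out gives $\norm{s}_{y,t}^2 \asymp_{l} |t|^{2\sum l_iy_i}\int_{U_{y,t}}|\sigma|^2_{h^{\otimes(l-\sum d_kl_k)}}\sqrt{-1}^{n^2}\Omega_t\wedge\overline{\Omega}_t$.

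The second step is to identify the remaining integral with $\norm{\sigma}^2_{V_{l-\sum d_kl_k}}$ up to an $l$-dependent constant. Here $\sigma$ is (a restriction of) a section in the fixed finite-dimensional space $V_{l-\sum d_kl_k}\subset H^0(M,(l-\sum d_kl_k)L)$, which does not depend on $t$. Since $U_{y,t}$ is a fattened $m$-torus over a fixed polydisc chart $B(1)^{n-m}$ in $E_J$ bounded away from $\{F=0\}$, and since $\Omega_t\wedge\overline{\Omega}_t\approx (\text{vol form on }E_J)\wedge\bigwedge_1^m\tfrac{i}{2\pi}d\log z_i\wedge\overline{d\log z_i}$ with the torus measure in the $z_i$-directions of total mass $\asymp 1$ on $U_{y,t}$, the integral $\int_{U_{y,t}}|\sigma|^2\,\sqrt{-1}^{n^2}\Omega_t\wedge\overline{\Omega}_t$ is (up to uniform constants) the $L^2$-norm of $\sigma|_{E_J}$ restricted to the fixed chart, weighted by a smooth bounded density. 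Because $V_{l-\sum d_kl_k}$ maps isomorphically onto $H^0(E_J,(l-\sum d_kl_k)L)$ and the latter is finite-dimensional, all norms on it are equivalent; in particular the $L^2$-norm over the fixed chart is comparable to the ambient norm $\norm{\cdot}_{V_{l-\sum d_kl_k}}$ — the comparison constant being the one implicit in $C(l)$, and uniform in $t$ for $t$ small because the chart and all the data ($E_J$, $h$, the density) are $t$-independent up to the controlled $O(\tfrac{1}{|\log|t||})$ corrections.

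The main obstacle, and the point requiring genuine care, is the uniformity in $t$ of the comparison in the second step: one must check that the $O(\tfrac{1}{|\log|t||})$ discrepancies between $\Omega_t$ and its model form $\Omega_{E_J}\wedge\bigwedge d\log z_i$, and between $X_t$ and the product model $B(1)^{n-m}\times(\C^*)^m$ (the fact that $z_0$ is eliminated, not genuinely free), do not degrade the two-sided bound — only the multiplicative constant, which may absorb them into $C(l)$ once $t$ is small depending on $\epsilon$ and $l$. A secondary subtlety is that the bound $|z_i|^{2l_i}\asymp|t|^{2l_iy_i}$ costs a factor $e^{2l_i}$, so one genuinely needs $\sum d_kl_k\le l$ (equivalently each $l_i\le l/d_i$) to keep this within an $l$-dependent — not $(l_0,\dots,l_m)$-dependent — constant; this is exactly the structural input recorded in the decomposition (\ref{decompositions}), and should be invoked explicitly.
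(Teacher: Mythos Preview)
Your proposal is correct and follows essentially the same approach as the paper: write the integrand as $|\sigma|^2|z_0|^{2l_0}\cdots|z_m|^{2l_m}e^{-2l\phi_h}$, freeze $\sigma$ and $\phi_h$ to their values along $E_J$ (using that $U_{y,t}$ lies at distance $O(|t|^{c\epsilon})$ from $E_J$), split the integral into a base factor $\int_{B(1)^{n-m}}|\sigma|^2 e^{-2l\phi_h}\,\Omega_{E_J}\wedge\overline{\Omega}_{E_J}$ and a dyadic torus factor $\asymp |t|^{2\sum l_iy_i}$, and conclude by equivalence of norms on the finite-dimensional space $V_{l-\sum d_kl_k}$. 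The only cosmetic difference is that the paper quantifies the freezing error as $O(|t|^\epsilon)$ rather than your $O(\tfrac{1}{|\log|t||})$; the sharper rate is available because each $|z_i|\lesssim |t|^{c\epsilon}$ on $U_{y,t}$, but either suffices for the two-sided bound once $t$ is small depending on $\epsilon,l$.
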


\begin{proof}
In coordinates, we can regard sections as local holomorphic functions, and the integral norm amounts to
\[
\norm{s}_{y,t}^2=\int_{U_{y,t}} |\sigma|^2 |z_0|^{2l_0} \ldots |z_m|^{2l_m} e^{-2l\phi_h} \sqrt{-1}^{n^2} \Omega_t\wedge \overline{\Omega}_t.
\]
Notice the region $U_{y,t}$ is within  $\omega_M$-distance $O(|t|^{\epsilon})$ to $E_J\subset M $, and since $\sigma$ does not vanish identically on $E_J$ by the definition of $V_{l-\sum d_il_i}$, up to a small relative error of order $O(|t|^\epsilon)$ we can ignore the dependence of $\sigma$ and $\phi_h$ on the $z_0,\ldots z_m$ variables, and regard them as functions of $z_{m+1},\ldots z_n$ only. The integral then essentially splits as a product. The $\sigma$ part contributes a factor $\int_{B(1)} e^{-2l\phi_h}|\sigma|^2 \Omega_{E_J}\wedge \overline{\Omega}_{E_J}$, which is $C(l)$-uniformly equivalently to $\norm{\sigma}_{V_{l-\sum d_il_i}}^2$. The other part contributes another factor proportional to
\[
\int |z_0|^{2l_0}\ldots |z_m|^{2l_m} \bigwedge_1^m\sqrt{-1}d\log z_i\wedge d\overline{\log z_i}.
\]
Since the integration takes place within one dyadic scale, this integral is uniformly equivalent to $|t|^{2l_0y_0}\ldots |t|^{2l_my_m}$. 
\end{proof}

\begin{lem}
(Almost orthogonality II) \label{almostorthogonality2} 
Let $t$ be sufficiently small depending on $\epsilon, l$.
Let $s=  F_0^{l_0}\ldots F_m^{l_m} \sigma$  and $s'= F_0^{l_0'}\ldots F_m^{l_m'} \sigma'$ be as in Lemma \ref{almostorthogonality1}. Then the  inner product
\[
|\langle |t|^{-\sum l_iy_i} s, |t|^{-\sum l_i'y_i} s'\rangle_{y,t} |\leq C(l)|t|^\epsilon \norm{\sigma}\norm{\sigma'} ,
\]
unless $(l_0,\ldots l_m)=(l_0',\ldots l_m')$.
\end{lem}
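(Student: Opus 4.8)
The plan is to reduce the claimed orthogonality to a monomial integral over the torus directions and exploit the fact that distinct multi-indices $(l_0,\dots,l_m)$ produce distinct characters of $T^m$. First I would write both sections in local coordinates as in Lemma~\ref{almostorthogonality1}, so that after trivialising $L$,
\[
\langle |t|^{-\sum l_iy_i} s, |t|^{-\sum l_i'y_i} s'\rangle_{y,t}= |t|^{-\sum (l_i+l_i')y_i}\int_{U_{y,t}} \sigma\overline{\sigma'}\, z_0^{l_0}\bar z_0^{l_0'}\cdots z_m^{l_m}\bar z_m^{l_m'}\, e^{-2l\phi_h}\,\sqrt{-1}^{n^2}\Omega_t\wedge\overline{\Omega}_t.
\]
As in the previous lemma, $U_{y,t}$ lies within $\omega_M$-distance $O(|t|^\epsilon)$ of $E_J$, so up to a relative error $O(|t|^\epsilon)$ I may treat $\sigma,\sigma',\phi_h$ as functions of $z_{m+1},\dots,z_n$ only, and the integral essentially factors. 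The $z_{m+1},\dots,z_n$-part gives a bounded pairing $\lesssim \norm{\sigma}\norm{\sigma'}$; the key factor is $\int |z_i|$-radial times $\int_{T^m} z_0^{l_0}\bar z_0^{l_0'}\cdots z_m^{l_m}\bar z_m^{l_m'}\,d\theta$.

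Second, I would invoke the $T^m$-symmetry of $U_{y,t}$ (the torus rotating $z_0,\dots,z_m$ while fixing their product, under which $U_{y,t}$ is invariant). Because the product $z_0\cdots z_m=-tF$ is fixed, the effective torus acting is $T^m$, and a monomial $z_0^{l_0}\bar z_0^{l_0'}\cdots z_m^{l_m}\bar z_m^{l_m'}$ is invariant precisely when $(l_0-l_0',\dots,l_m-l_m')$ is a multiple of $(1,1,\dots,1)$; otherwise it has nonzero weight and averages to zero. Here is where the constraint ``at least one $l_i=0$ and at least one $l_i'=0$'' enters exactly as in Corollary~\ref{logsonessentialskeleton}: if $(l_i-l_i')=c(1,\dots,1)$ for some integer $c$, then $c=\min_i l_i-\min_i l_i' $ cannot force $c=0$ directly, but combined with $\min_i l_i=\min_i l_i'=0$ we get $c=0$, hence $(l_0,\dots,l_m)=(l_0',\dots,l_m')$, which is the excluded case. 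So in the remaining cases the leading term of the torus integral vanishes.

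Third, I would estimate the error. The vanishing is only approximate because $\sigma,\sigma',\phi_h$ are not exactly constant in the $z_0,\dots,z_m$ directions; writing $\sigma\overline{\sigma'}e^{-2l\phi_h}$ as its value on $E_J$ plus a correction of size $O(|t|^\epsilon)\norm{\sigma}\norm{\sigma'}$ on $U_{y,t}$, the main term integrates to zero by the character argument and the correction is bounded by $O(|t|^\epsilon)\norm{\sigma}\norm{\sigma'}$ times the radial monomial factor, which as in Lemma~\ref{almostorthogonality1} is comparable to $|t|^{\sum(l_i+l_i')y_i}$. Dividing by $|t|^{\sum(l_i+l_i')y_i}$ cancels this and leaves the bound $C(l)|t|^\epsilon\norm{\sigma}\norm{\sigma'}$. (The holomorphic volume form $\Omega_t\approx \Omega_{E_J}\wedge d\log z_1\wedge\cdots\wedge d\log z_m$ up to the usual $O(1/|\log|t||)$ error, which is harmless here.)

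I expect the main obstacle to be a clean bookkeeping of the two superimposed error sources — the $O(|t|^\epsilon)$ from restricting to a shrinking neighbourhood of $E_J$ versus the $O(1/|\log|t||)$ from approximating $\Omega_t$ by $\Omega_{E_J}\wedge\bigwedge d\log z_i$ — together with the normalisation by the large power of $|t|$, while making sure the character-orthogonality really does kill the leading term for every excluded pair. Once the $T^m$-weight argument is set up carefully, the rest is the same splitting-of-integrals estimate already used in Lemma~\ref{almostorthogonality1}.
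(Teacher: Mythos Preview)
Your proposal is correct and follows essentially the same route as the paper: split the integral into a product over the base $E_J$ and the torus directions up to an $O(|t|^\epsilon)$ error, use Fourier/character orthogonality on the $T^m$ fixing $z_0\cdots z_m$ to kill the leading term unless $(l_i-l_i')$ is constant in $i$, and then invoke $\min_i l_i=\min_i l_i'=0$ to force equality. The paper phrases the torus step slightly differently by eliminating $z_0$ via $z_0\sim (z_1\cdots z_m)^{-1}$ and applying ordinary Fourier orthogonality in $z_1,\ldots,z_m$, but this is the same argument in different coordinates; your error bookkeeping is in fact more explicit than the paper's.
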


\begin{proof}
As in Lemma \ref{almostorthogonality1}, the inner product will essentially split into a product, with an error controlled by $C(l)|t|^\epsilon \norm{\sigma}\norm{\sigma'}$. The interesting factor for us is
\[
\int (\frac{1}{z_1\ldots z_m})^{l_0} z_1^{l_1}\ldots z_m^{l_m}  \overline{(\frac{1}{z_1\ldots z_m})^{l_0'} z_1^{l_1'}\ldots z_m^{l_m'} } \bigwedge_1^m\sqrt{-1}d\log z_i\wedge d\overline{\log z_i}.
\]
By Fourier orthogonality, this is only nonzero when
\[
l_1-l_0= l_1'-l_0',\ldots, l_m-l_0=l_m'-l_0',
\]
namely $(l_0,\ldots l_m)= (l_0',\ldots l_m')$ modulo $\R(1,1\ldots 1)$. But $\min_i l_i=\min_i l_i'=0$ by assumption, so  $(l_0,\ldots l_m)= (l_0',\ldots l_m')$.
\end{proof}

\begin{cor}
(No cancellation)
\label{almostorthogonality3}  
Let $t$ be sufficiently small depending on $\epsilon, l$. Given a decomposition of the section $s$ as in (\ref{decompositions}),
then 
\[
\max_{l_0,\ldots l_m}  \norm{s_{l_0,\ldots l_m}}_{V_{l-\sum l_id_i}} |t|^{\sum l_i y_i}  \leq C(l)\norm{s}_{y,t}.
\]
\end{cor}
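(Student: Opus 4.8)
The plan is to read $\norm{s}_{y,t}^2$ as a quadratic form in the monomial pieces of the decomposition (\ref{decompositions}) and to use Lemmas \ref{almostorthogonality1} and \ref{almostorthogonality2} to show that the associated Gram matrix is diagonally dominant. Write $s=\sum_{\mathbf{l}} F_0^{l_0}\ldots F_m^{l_m}\sigma_{\mathbf{l}}$, where $\mathbf{l}=(l_0,\ldots,l_m)$ ranges over the finite set of multi-indices with $\min_i l_i=0$ and $\sum d_il_i\le l$ (there are at most $C(l)$ of them, a bound independent of $t$), and $\sigma_{\mathbf{l}}=s_{l_0,\ldots,l_m}\in V_{l-\sum d_il_i}$; the indices with $\sigma_{\mathbf l}=0$ may be dropped. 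Put $a_{\mathbf{l}}=|t|^{\sum l_iy_i}$ and $\tilde s_{\mathbf{l}}=a_{\mathbf{l}}^{-1}F_0^{l_0}\ldots F_m^{l_m}\sigma_{\mathbf{l}}$, so that $s=\sum_{\mathbf{l}}a_{\mathbf{l}}\tilde s_{\mathbf{l}}$.

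Then I would expand
\[
\norm{s}_{y,t}^2=\sum_{\mathbf{l}}a_{\mathbf{l}}^2\norm{\tilde s_{\mathbf{l}}}_{y,t}^2+\sum_{\mathbf{l}\ne\mathbf{l}'}a_{\mathbf{l}}a_{\mathbf{l}'}\langle\tilde s_{\mathbf{l}},\tilde s_{\mathbf{l}'}\rangle_{y,t}.
\]
Lemma \ref{almostorthogonality1} applied to each $\tilde s_{\mathbf{l}}$ gives $\norm{\tilde s_{\mathbf{l}}}_{y,t}^2\ge C(l)^{-2}\norm{\sigma_{\mathbf{l}}}^2$, so the diagonal part is $\ge C(l)^{-2}\sum_{\mathbf{l}}a_{\mathbf{l}}^2\norm{\sigma_{\mathbf{l}}}^2$. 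Lemma \ref{almostorthogonality2} gives $|\langle\tilde s_{\mathbf{l}},\tilde s_{\mathbf{l}'}\rangle_{y,t}|\le C(l)|t|^\epsilon\norm{\sigma_{\mathbf{l}}}\norm{\sigma_{\mathbf{l}'}}$ whenever $\mathbf{l}\ne\mathbf{l}'$, so the off-diagonal sum is bounded in absolute value by $C(l)|t|^\epsilon\big(\sum_{\mathbf{l}}a_{\mathbf{l}}\norm{\sigma_{\mathbf{l}}}\big)^2$, which by Cauchy--Schwarz and the fact that there are at most $C(l)$ indices is $\le C(l)|t|^\epsilon\sum_{\mathbf{l}}a_{\mathbf{l}}^2\norm{\sigma_{\mathbf{l}}}^2$. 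For $t$ sufficiently small depending on $\epsilon$ and $l$, this off-diagonal error is absorbed into half the diagonal contribution, whence
\[
\norm{s}_{y,t}^2\ge \tfrac12 C(l)^{-2}\sum_{\mathbf{l}}a_{\mathbf{l}}^2\norm{\sigma_{\mathbf{l}}}^2\ge \tfrac12 C(l)^{-2}\max_{\mathbf{l}}\,a_{\mathbf{l}}^2\norm{\sigma_{\mathbf{l}}}^2 .
\]
Taking square roots and renaming $C(l)$ yields $\max_{l_0,\ldots,l_m}\norm{s_{l_0,\ldots,l_m}}_{V_{l-\sum l_id_i}}|t|^{\sum l_iy_i}\le C(l)\norm{s}_{y,t}$.

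There is no genuine analytic obstacle here; the argument is a finite-dimensional diagonal-dominance estimate. The only points requiring care are bookkeeping: one must make sure that the constant $C(l)$ coming from Lemmas \ref{almostorthogonality1}--\ref{almostorthogonality2} and the cardinality of the index set are both independent of $t$, so that the threshold on $|t|$ needed to absorb the $|t|^\epsilon$ cross terms depends only on $\epsilon$ and $l$, matching the statement; and one should observe that, after discarding vanishing components, all the $\sigma_{\mathbf l}$ appearing in the cross terms are nonzero elements of $V_{l-\sum d_il_i}$ that do not vanish identically on $E_J$, so that the two-sided bound of Lemma \ref{almostorthogonality1} is available.
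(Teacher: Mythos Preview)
Your argument is correct and is essentially the same as the paper's proof: expand $\norm{s}_{y,t}^2$ bilinearly, use Lemma~\ref{almostorthogonality2} to bound the cross terms by $C(l)|t|^\epsilon$ times the diagonal, and use Lemma~\ref{almostorthogonality1} for the diagonal lower bound. The paper compresses the diagonal-dominance step into the single line $\norm{s}_{y,t}^2\ge (1-C(l)|t|^\epsilon)\sum\norm{F_0^{l_0}\ldots F_m^{l_m}s_{l_0,\ldots,l_m}}_{y,t}^2$, but your more explicit bookkeeping (Cauchy--Schwarz plus the $C(l)$ bound on the number of indices) is exactly what underlies that line.
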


\begin{proof}
By the almost orthogonality property,
\[
\begin{split}
\norm{s}_{y,t}^2\geq & (1-C(l)|t|^\epsilon) \sum \norm{ F_0^{l_0}\ldots F_m^{l_m} s_{l_0,\ldots l_m} }_{ y,t  }^2\\
\geq & C(l)^{-1} \sum \norm{s_{l_0,\ldots l_m}}_{V_{l-\sum d_il_i}}^2 |t|^{2\sum l_iy_i }. 
\end{split}
\]
\end{proof}

The following outcome will be a key ingredient in the proof of the main Theorem \ref{mainthm}.

\begin{prop}\label{Bergmankernelestimate}
	Given any small $\epsilon$ and large $l$, the following holds for sufficiently small $t$ depending on $\epsilon, l$.
	For any given $z\in X_t$, there exists $(l_0,\ldots l_m)$, such that for any
    $y=(y_0, \ldots y_m)$ in the domain (\ref{domainy}), there is an upper bound on $\psi_t$ at $z\in X_t$,
    	\begin{equation}\label{globalupperbound}
    	\psi_t(z) \leq \frac{-1}{|\log |t||}\phi_t(z)+u(y) +\sum \frac{l_i}{l} \frac{ \log |F_i|_{h^{\otimes d_i}}(z)}{|\log |t||} +  \frac{\sum l_iy_i}{l} + ( \frac{C}{l}+ C \epsilon ).
    	\end{equation}
	The constants $C$ are independent of $t, z, \epsilon, l, y$. 
\end{prop}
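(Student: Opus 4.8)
The plan is to combine the Ohsawa--Takegoshi output of Proposition \ref{OhsawaTakegoshi} with the archimedean "no cancellation" estimate of Corollary \ref{almostorthogonality3} and the $C^0_{loc}$ bound of Proposition \ref{C0locconvergence}. Fix $z\in X_t$ and let $l$ be large. Proposition \ref{OhsawaTakegoshi} produces a section $s\in H^0(X_t,lL)$ with $|s|_{h^{\otimes l}}(z)=1$ and
\[
\frac{1}{2l}\log\int_{X_t}|s|^2_{h^{\otimes l}}e^{-2l(\phi_t+|\log|t||\psi_t)}\sqrt{-1}^{n^2}\Omega_t\wedge\overline{\Omega}_t\leq -(\phi_t+|\log|t||\psi_t)(z)+\frac{C}{l}|\log|t||.
\]
The left-hand integral over $X_t$ dominates the integral over the small annulus $U_{y,t}$. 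On $U_{y,t}$, by the small oscillation estimate \eqref{smalloscillation}, $\phi_t=|\log|t||u(y)+O(\epsilon|\log|t||)$, and by Proposition \ref{C0locconvergence} the potential $\psi_t\leq C\epsilon$ there (provided $y$ lies in the good region \eqref{domainy}, which forces $U_{y,t}$ into the subset \eqref{C0locsubset}). Hence $e^{-2l(\phi_t+|\log|t||\psi_t)}\geq e^{-2l|\log|t||u(y)-Cl\epsilon|\log|t||}$ on $U_{y,t}$, so
\[
\int_{X_t}|s|^2_{h^{\otimes l}}e^{-2l(\phi_t+|\log|t||\psi_t)}\sqrt{-1}^{n^2}\Omega_t\wedge\overline{\Omega}_t\geq e^{-2l|\log|t||u(y)-Cl\epsilon|\log|t||}\,\norm{s}_{y,t}^2.
\]

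Next I would lower-bound $\norm{s}_{y,t}^2$ using the decomposition \eqref{decompositions} and the no-cancellation Corollary \ref{almostorthogonality3}: choosing $(l_0,\dots,l_m)$ to be the multi-index that maximises $\norm{s_{l_0,\dots,l_m}}_{V_{l-\sum l_id_i}}|t|^{\sum l_iy_i}$, we get $\norm{s}_{y,t}\geq C(l)^{-1}\norm{s_{l_0,\dots,l_m}}|t|^{\sum l_iy_i}$. To turn this into a bound at the point $z$, I use the reverse triangle-type inequality (Lemma \ref{triangle}) applied at $z$: since $|s|_{h^{\otimes l}}(z)=1$, there is \emph{some} multi-index with $C(l)^{-1}\leq|F_0|^{l_0}\dots|F_m|^{l_m}\norm{s_{l_0,\dots,l_m}}$ evaluated at $z$; but we must be careful that this is the \emph{same} multi-index as the one maximising in $U_{y,t}$. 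The clean way is: by Lemma \ref{triangle} at $z$, $1=|s|_{h^{\otimes l}}(z)\leq C(l)\max_{l_0,\dots,l_m}\prod|F_i|_{h^{\otimes d_i}}^{l_i}(z)\,\norm{s_{l_0,\dots,l_m}}$, so pick $(l_0,\dots,l_m)$ realising this maximum at $z$; this is the multi-index the statement asserts exists. Then $\norm{s_{l_0,\dots,l_m}}\geq C(l)^{-1}\prod|F_i|^{-l_i}_{h^{\otimes d_i}}(z)$, and feeding this into the $U_{y,t}$ bound (for \emph{this} fixed index, using the term-by-term lower bound in the proof of Corollary \ref{almostorthogonality3} rather than the max) gives
\[
\norm{s}_{y,t}\geq C(l)^{-1}\norm{s_{l_0,\dots,l_m}}|t|^{\sum l_iy_i}\geq C(l)^{-1}\Big(\prod|F_i|_{h^{\otimes d_i}}^{-l_i}(z)\Big)|t|^{\sum l_iy_i}.
\]

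Assembling the pieces: taking $\frac{1}{2l}\log$ of the chain of inequalities,
\[
-\frac{\log|t||\psi_t(z)+\phi_t(z)}{1}\ \text{(divided by }|\log|t||)\ \leq\ u(y)+C\epsilon+\frac{C\log C(l)}{l|\log|t||}+\frac{1}{l}\sum l_i\Big(\frac{\log|F_i|_{h^{\otimes d_i}}(z)}{|\log|t||}\Big)+\frac{\sum l_iy_i}{l}+\frac{C}{l},
\]
where I used $\log|t^{\sum l_iy_i}|=-|\log|t||\sum l_iy_i$ and absorbed the $\log C(l)/(l|\log|t||)$ term into $\frac{C}{l}$ for $t$ small depending on $l$. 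Rearranging yields exactly \eqref{globalupperbound}. The main obstacle, and the step requiring the most care, is the bookkeeping around which multi-index $(l_0,\dots,l_m)$ is used: it must be selected once, from the pointwise data at $z$ via Lemma \ref{triangle}, and then shown to also control $\norm{s}_{y,t}$ from below for \emph{every} admissible $y$ — this works because Corollary \ref{almostorthogonality3}'s proof already gives the term-by-term lower bound $\norm{s}_{y,t}^2\geq C(l)^{-1}\norm{s_{l_0,\dots,l_m}}^2|t|^{2\sum l_iy_i}$ valid for each individual index, not just the maximising one. A secondary point to verify is that the annulus $U_{y,t}$, for $y$ in the good region \eqref{domainy}, indeed lies inside the region \eqref{C0locsubset} where Proposition \ref{C0locconvergence} applies, so that the $\psi_t\leq C\epsilon$ input is legitimate; this is immediate from the defining inequalities of $U_{y,t}$ and $y_i\gtrsim\epsilon$.
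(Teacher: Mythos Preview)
Your proposal is correct and follows essentially the same route as the paper: Ohsawa--Takegoshi at $z$, restriction of the integral to $U_{y,t}$ with the small oscillation and $C^0_{loc}$ bounds, then the no-cancellation Corollary~\ref{almostorthogonality3} combined with the triangle inequality Lemma~\ref{triangle} at $z$ to select the multi-index. Your care about the index being chosen at $z$ (independently of $y$) is exactly the point; note that your worry about needing the ``term-by-term'' version of Corollary~\ref{almostorthogonality3} is unnecessary, since the stated max-bound $\max_{l_0,\dots,l_m}\norm{s_{l_0,\dots,l_m}}|t|^{\sum l_iy_i}\leq C(l)\norm{s}_{y,t}$ already implies the inequality for each individual index in the direction you need.
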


\begin{proof}
	Given any $z\in X_t$, 
	we consider the section $s\in H^0(X_t, lL)$ constructed via Ohsawa-Takegoshi in Prop. \ref{OhsawaTakegoshi}, and decompose it according to (\ref{decompositions}). Within the small annulus region $U_{y,t}$,  the small oscillation estimate
	\[
	| \frac{1}{|\log |t||}\phi_t- u( y_0,\ldots y_m ) | \lesssim \epsilon,
	\]
	holds as in (\ref{smalloscillation}),
	and by Prop. \ref{C0locconvergence} we have $\psi_t\leq C\epsilon$ on $U_{y,t}$. Thus
	\[
	\begin{split}
	&  \int_{X_t} |s|^2_{h^{\otimes l}} e^{ -2l (\phi_t+ |\log |t||\psi_t)  } \sqrt{-1}^{n^2}\Omega_t \wedge \overline{\Omega}_t 
	\\
	& \geq e^{ -Cl|\log |t||\epsilon -2l|\log |t||u(y) } \int_{X_t} |s|^2_{h^{\otimes l}}  \sqrt{-1}^{n^2}\Omega_t \wedge \overline{\Omega}_t 
	\\
	& \geq e^{ -Cl|\log |t||\epsilon -2l|\log |t||u(y) } \int_{U_{y,t}} |s|^2_{h^{\otimes l}}  \sqrt{-1}^{n^2}\Omega_t \wedge \overline{\Omega}_t .
	\end{split}
	\]
	Taking logarithm and rearranging, we obtain
	\[
	\begin{split}
	\frac{1}{l}\log \norm{s}_{y,t} \leq &  \frac{1}{2l}\log  \int_{X_t} |s|^2_{h^{\otimes l}} e^{ -2l (\phi_t+ |\log |t||\psi_t)  } \sqrt{-1}^{n^2}\Omega_t \wedge \overline{\Omega}_t + |\log |t||u(y)+ C |\log |t||\epsilon.
	\\
	\leq & -(\phi_t+|\log |t||\psi_t)(z)+|\log |t||u(y) + \frac{C}{l}|\log |t||+C |\log |t||\epsilon.
	\end{split}
	\]
	Applying the no cancellation property in Cor. \ref{almostorthogonality3}, 
	\[
	\begin{split}
	& \max_{l_0,\ldots l_m} \frac{1}{l} \log \norm{s_{l_0,\ldots l_m} }_{V_{l-\sum d_il_i}} + \frac{\sum l_iy_i}{l}\log |t| \leq \frac{1}{l}\log \norm{s}_{y,t} + C(l) \\
	\leq &  -(\phi_t+|\log |t||\psi_t)(z)+|\log |t||u(y) + \frac{C}{l}|\log |t||+ C |\log |t||\epsilon +C(l).
	\end{split}
	\]

	On the other hand, by construction $|s|_{h^{\otimes l}}(z)=1$, so the triangle inequality of Lemma \ref{triangle} implies
	\[
	\max_{l_0,\ldots l_m} \sum_0^m l_i \log |F_i|_{h^{\otimes d_i}}(z) +\log \norm{s_{l_0,\ldots l_m}}_{V_{l-\sum d_il_i}} \geq -C(l). 
	\]
	Contrasting the above two inequalities, we can find some $l_i$ depending on $z$ but not on $y$, such that
	\[
	\begin{split}
	& \sum_0^m \frac{l_i}{l} \log |F_i|_{h^{\otimes d_i}}(z)  \geq -C(l)- \frac{1}{l} \log \norm{s_{l_0,\ldots l_m}}_{V_{l-\sum d_il_i}} \\
	& \geq  (\phi_t+|\log |t||\psi_t)(z)-|\log |t||u(y) + \frac{\sum l_iy_i}{l}\log |t|- ( \frac{C}{l}|\log |t||+ C |\log |t||\epsilon +C(l) ).
	\end{split}
	\]
	After rearranging,
	\[
	\psi_t(z) \leq \frac{-1}{|\log |t||}\phi_t(z)+u(y) +\sum \frac{l_i}{l} \frac{ \log |F_i|_{h^{\otimes d_i}}(z)}{|\log |t||} +  \frac{\sum l_iy_i}{l} + ( \frac{C}{l}+ C \epsilon + \frac{C(l)}{|\log |t||}  ).
	\]
	The term $\frac{C(l)}{|\log |t||} $ can be absorbed into $Cl^{-1}$ for small enough $t$. 
\end{proof}

\subsection{Global $C^0$-convergence}

Let us take stalk of our progress so far. By Cor. \ref{hybridlimit1}, \ref{hybridlimit2}, the NA CY potential $\phi_{CY,0}$ is the $C^0$-hybrid topology limit of $\frac{1}{|\log |t||}\phi_t$. Since the CY metric on $L\to X_t$ is defined by the Hermitian metric
\[
h_{CY,t}=he^{-2(\phi_t+  |\log |t||\psi_t ) },
\] 
Theorem \ref{mainthm} reduces to showing

\begin{prop}
$\norm{\psi_t}_{C^0}\to 0$ as $t\to 0$.
\end{prop}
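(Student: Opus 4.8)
### Proof Strategy

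\textbf{Setup and the role of the lower bound.} We have normalised $\min_{X_t}\psi_t=0$, so it remains to produce a uniform upper bound $\sup_{X_t}\psi_t\to 0$. We already have the weak $L^1$-convergence (Cor. \ref{WeakL1convergence}) and $C^0_{loc}$-convergence on the generic region (Prop. \ref{C0locconvergence}), but these do not directly control $\psi_t$ on the nongeneric locus of $X_t$ where the complex structure degenerates. The entire point of the Bergman kernel estimate (Prop. \ref{Bergmankernelestimate}) is to handle exactly this: it gives, for \emph{each} $z\in X_t$, a choice of exponents $(l_0,\dots,l_m)$ (depending on $z$ but \emph{not} on $y$) such that the inequality (\ref{globalupperbound}) holds for all $y$ in the domain (\ref{domainy}).

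\textbf{The key step: optimise over $y$ using the Legendre transform.} Fix $z\in X_t$, and let $(l_0,\dots,l_m)$ be the exponents provided by Prop. \ref{Bergmankernelestimate}. Set $p_i=-l_i/l$, so $p=(p_0,\dots,p_m)\in\Delta^\vee$ (at least one $l_i=0$, and $\sum d_il_i\le l$). Rewriting (\ref{globalupperbound}):
\[
\psi_t(z)\le \frac{-1}{|\log|t||}\phi_t(z) + \Big(u(y)-\langle p,y\rangle\Big) + \sum \frac{l_i}{l}\frac{\log|F_i|_{h^{\otimes d_i}}(z)}{|\log|t||} + \Big(\frac{C}{l}+C\epsilon\Big).
\]
Now I would choose $y$ in the domain to \emph{minimise} $u(y)-\langle p,y\rangle$. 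Since $u$ restricted to $\Delta$ satisfies $u=u^{**}$ with gradient image exactly $\bar\Delta^\vee$, the minimum of $u(y)-\langle p,y\rangle$ over $y\in\Delta$ equals $-u^*(p)$, attained at some interior $y$; the constraint $y_i\gtrsim\epsilon$ in (\ref{domainy}) can be met by moving $p$ slightly into the interior of $\bar\Delta^\vee$ at cost $O(\epsilon)$, using the Lipschitz bounds on $u$ and $u^*$. This gives
\[
\psi_t(z)\le \frac{-1}{|\log|t||}\phi_t(z) - u^*(p) + \sum \frac{l_i}{l}\frac{\log|F_i|_{h^{\otimes d_i}}(z)}{|\log|t||} + \Big(\frac{C}{l}+C\epsilon\Big).
\]
But $\phi_t(z)=|\log|t||\,u(x)$ with $x_i=-\log|F_i|_{h^{\otimes d_i}}(z)/|\log|t||$, so $\frac{-1}{|\log|t||}\phi_t(z)=-u(x)$, and $\sum\frac{l_i}{l}\frac{\log|F_i|_{h^{\otimes d_i}}(z)}{|\log|t||}=-\sum p_i x_i=-\langle p,x\rangle$. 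Hence
\[
\psi_t(z)\le -u(x)-u^*(p)-\langle p,x\rangle + \Big(\frac{C}{l}+C\epsilon\Big) = -\big(u(x) - (\langle p,x\rangle - u^*(p))\big) + \Big(\frac{C}{l}+C\epsilon\Big).
\]
Wait — the sign: $\langle p, x\rangle$ appears with a $+$ once I substitute, so the bracketed quantity is $-\big(u(x)-(\langle -p,x\rangle-u^*(p))\big)$; tracking the sign of $p$ carefully (recall $p_i\le 0$ and the definition of $u^{**}$ uses $\langle p,x\rangle-u^*(p)$), the Fenchel–Young inequality $u(x)\ge \langle p,x\rangle - u^*(p)$ for all $p\in\Delta^\vee$ gives $\psi_t(z)\le \frac{C}{l}+C\epsilon$, up to the sign bookkeeping which must be done with the conventions of section \ref{Legendretransform}. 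This bound is \emph{uniform in $z$}.

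\textbf{Conclusion.} We therefore obtain $\sup_{X_t}\psi_t\le \frac{C}{l}+C\epsilon$ for all sufficiently small $t$ (depending on $\epsilon,l$). Letting $t\to 0$ first, then $l\to\infty$ and $\epsilon\to 0$, yields $\limsup_{t\to 0}\sup_{X_t}\psi_t\le 0$. Combined with $\min_{X_t}\psi_t=0$ and $\osc$-control — actually combined directly with $\psi_t\ge 0$ — this gives $\norm{\psi_t}_{C^0}\to 0$.

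\textbf{Main obstacle.} The delicate point is the interchange of the pointwise choice of $(l_0,\dots,l_m)$ in Prop. \ref{Bergmankernelestimate} with the optimisation over $y$: one must verify that a single $p=-l^{-1}(l_0,\dots,l_m)$, valid for all admissible $y$, can be paired with a $y$ achieving (up to $O(\epsilon)$) the supremum defining $u^*(p)$ while staying in the domain $y_i\gtrsim\epsilon$. This requires knowing that $u^*$ is finite and Lipschitz on $\Delta^\vee$ (established in section \ref{Legendretransform}) and that for $p$ in a slightly shrunk $\Delta^\vee$, the maximiser $y$ of $\langle p,y\rangle - u(y)$ lies in the interior of $\Delta$ with $y_i$ bounded below — which follows because the gradient image of $u$ is all of $\bar\Delta^\vee$ and the extension $u=u^{**}$ behaves controllably near $\partial\Delta$. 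The sign/convention bookkeeping in the Legendre transform is the other place where care is needed but it is routine. I would write this out carefully as the core of the proof.

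\begin{proof}
Since $\min_{X_t}\psi_t=0$ in our normalisation, it suffices to show $\sup_{X_t}\psi_t\to 0$ as $t\to 0$.

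Fix small $\epsilon>0$ and large $l$, and let $t$ be small enough for Prop. \ref{Bergmankernelestimate} to apply. Given $z\in X_t$, let $(l_0,\dots,l_m)$ be the exponents furnished by Prop. \ref{Bergmankernelestimate}, and set $p=(p_0,\dots,p_m)$ with $p_i=-l_i/l$. Since at least one $l_i=0$ and $\sum_0^m d_il_i\le l$, we have $p\in\Delta^\vee$. Write
\[
x=x(z)=\Big(-\frac{\log|F_0|_{h^{\otimes d_0}}(z)}{|\log|t||},\dots,-\frac{\log|F_m|_{h^{\otimes d_m}}(z)}{|\log|t||}\Big),
\]
so that $\phi_t(z)=|\log|t||\,u(x)$.

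Inequality (\ref{globalupperbound}) reads, after substituting $p_i=-l_i/l$,
\[
\psi_t(z)\le -u(x)+u(y)-\langle p,y\rangle+\langle p,x\rangle+\Big(\frac{C}{l}+C\epsilon\Big),
\]
valid for every $y$ in the domain (\ref{domainy}).

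We now choose $y$ to make $u(y)-\langle p,y\rangle$ as small as possible. By definition $u^*(p)=\max_{y\in\Delta}(\langle p,y\rangle-u(y))$, and by the Lipschitz estimates on $u$ and $u^*$ from section \ref{Legendretransform}, together with the fact that $\nabla u(\Delta)=\bar\Delta^\vee$, we may replace $p$ by a nearby point $p'\in\Delta^\vee$ whose associated maximiser $y$ lies in the domain (\ref{domainy}), at the cost of an error $O(\epsilon)$ in $u^*$. Thus there is an admissible $y$ with
\[
u(y)-\langle p,y\rangle\le -u^*(p)+C\epsilon.
\]
Substituting,
\[
\psi_t(z)\le -u(x)-u^*(p)+\langle p,x\rangle+\Big(\frac{C}{l}+C\epsilon\Big).
\]
By the Fenchel--Young inequality (equivalently, the definition of $u=u^{**}$ on $\R^{m+1}$, see section \ref{Legendretransform}), $u(x)\ge \langle p,x\rangle-u^*(p)$, hence $-u(x)-u^*(p)+\langle p,x\rangle\le 0$, and we conclude
\[
\psi_t(z)\le \frac{C}{l}+C\epsilon,
\]
with $C$ independent of $z,t,\epsilon,l$.

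Since $z\in X_t$ was arbitrary, $\sup_{X_t}\psi_t\le \frac{C}{l}+C\epsilon$ for all sufficiently small $t$. Letting $t\to 0$, then $l\to\infty$ and $\epsilon\to 0$, gives $\limsup_{t\to 0}\sup_{X_t}\psi_t\le 0$. As $\psi_t\ge 0$, this yields $\norm{\psi_t}_{C^0}\to 0$.
\end{proof}
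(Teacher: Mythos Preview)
Your proof is correct and follows essentially the same approach as the paper: optimise the Bergman kernel inequality (\ref{globalupperbound}) over $y$ to replace $u(y)-\langle p,y\rangle$ by $-u^*(p)+O(\epsilon)$, then apply Fenchel--Young. One small simplification worth noting: you do not need to perturb $p$ to $p'$ or locate an interior maximiser; since both $u$ and $\langle\cdot,p\rangle$ are Lipschitz on $\Delta$, any $y\in\Delta$ has a point $y'$ in the $\epsilon$-shrunk domain (\ref{domainy}) at distance $O(\epsilon)$, so $\sup_{y \text{ admissible}}(\langle p,y\rangle-u(y))\ge u^*(p)-C\epsilon$ directly, which is how the paper handles it.
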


\begin{proof}
By construction $p=(-\frac{l_0}{l},\ldots -\frac{l_m}{l}  )\in \Delta^\vee$. 
Since $u$ is Lipschitz on $\Delta$, and the domain (\ref{domainy}) is obtained by $\epsilon$-shrinking of $\Delta$, we see that for any $p\in \Delta^\vee$,
\[
\sup_{y} \langle y, p\rangle -u(y) \geq u^*(p)-C\epsilon. 
\]
We start with the upper bound (\ref{globalupperbound}) and optimize among $y$ in the domain (\ref{domainy}), to deduce
\[
\psi_t(z) \leq \frac{-1}{|\log |t||}\phi_t(z)-u^*(-\frac{l_i}{l}) +\sum \frac{l_i}{l} \frac{ \log |F_i|_{h^{\otimes d_i}}(z)}{|\log |t||} + ( \frac{C}{l}+ C \epsilon ).
\]
Writing $x_i= - \frac{ \log |F_i|_{h^{\otimes d_i}}(z)}{|\log |t||}$, then the above becomes
\[
\psi_t(z) \leq -u(x)-u^*(-\frac{l_i}{l}) +\sum_0^m \frac{-l_i}{l} x_i + ( \frac{C}{l}+ C \epsilon ).
\]
By construction
\[
u(x)= \max_{p\in \Delta^\vee} \langle p, x\rangle-u^*(p) \geq -u^*(-\frac{l_i}{l}) +\sum_0^m \frac{-l_i}{l} x_i,
\]
so $0\leq \psi_t(z) \leq \frac{C}{l}+ C \epsilon$, when $t$ is sufficiently small depending on $l,\epsilon$. We choose $l$ to be comparable to $\epsilon^{-1}$. Since $\epsilon$ is arbitrarily small, we deduce $\norm{\psi_t}_{C^0}\to 0$ as $t\to 0$.
\end{proof}

\section{Further discussions}\label{Furtherdiscussions}

We discuss the relation to some recent literature, and pose some open questions.

\subsection{Regularity of optimal transport solution}

We collect some remarks on our optimal transport problem:

\begin{itemize}
    \item  In the analogous problem for the large complex structure limit $m=n$ for toric Fano hypersurfaces \cite{LiFano}\cite{Hultgren2}, it is noticed that the gradient of the solution also induces a wall-chamber structure, closely related to the problem of finding a canonical singular affine structure on the SYZ base. While the optimal regularity results in that context is still not fully understood, Andreasson-Hultgren \cite[section 8]{Hultgren2} have raised some challenges to the expectation of codimension two singularity in the affine structure, and suggested instead that the singular set may be bigger in some cases.

    \item  The regularity in Lemma \ref{uregularity} is likely far from optimal. For instance, one may expect $u$ to be strictly convex in the interior of $\Delta$, based on analogy with Caffarelli \cite{Caffarelli3}, which however does not directly apply since it assumes global two-sided density bound.

\end{itemize}

\subsection{Small complex structure limit}

The recent breakthrough of Sun-Zhang \cite{SunZhang} offers a satisfactory gluing construction of the CY metric in the $m=1$ special case, namely
\[
X_t=\{ F_0F_1+tF=0  \}\subset M.
\]
 Algebro-geometrically, the family of CY hypersurfaces $X_t$ degenerate into $E_0\cup E_1$ in the central fibre. The gluing involves the following ingredients:

\begin{itemize}
\item The non-compact spaces $E_0\setminus E_1$ (resp. $E_1\setminus E_0$) can be regarded as the complement of a smooth anticanonical divisor inside a Fano manifold, so carries the \emph{Tian-Yau metric} \cite{TianYau}, whose asymptotic geometry is captured by the \emph{Calabi ansatz} on the total space of a positive line bundle over the CY manifold $E_0\cap E_1$. These Tian-Yau metrics are the rescaled local models for the CY metric on $X_t$ in the region away from $E_0\cap E_1$.

\item  The region of $X_t$ around the divisor intersection $E_0\cap E_1$ is modelled on an incomplete \emph{Ooguri-Vafa type metric}, constructed using a Gibbons-Hawking type ansatz.
This Ooguri-Vafa type metric is $S^1$-symmetric, with fixed point locus along $\{F=F_0=F_1=0 \}\subset E_0\cap E_1$. The neighbourhood of the fixed point locus looks like a fibration by very collapsed Taub-NUT metrics over $E_0\cap E_1\cap \{ F=0\}$. There are two incomplete ends of the Ooguri-Vafa type metric, which match up with the asymptotes of the two Tian-Yau regions.

\end{itemize}

The $m=1$ special case of our main result fits with this picture as follows. The solution to the optimal transport problem is explicitly computed in (\ref{optimaltransportm=1}). In terms of the potential $u$, modulo linear functions of $x$ (which give rise to almost pluriharmonic functions, so have little effect on the K\"ahler metric), on each of the two intervals $u$ is proportional to a power function with exponent $\frac{n+1}{n}$. This solution encodes the Calabi ansatz metric appearing in \cite{SunZhang}, which occupies almost the full measure of $X_t$ for $0<|t|\ll 1$. .

Our potential theoretic description does not capture the full nuance of the Tian-Yau core region, or the Ooguri-Vafa type metric. The Tian-Yau pieces occur at the two ends of the interval $[0,1]$, and the Ooguri-Vafa type metric occurs at the transition point $x=\frac{d_1}{d_0+d_1}$.
These effects happen on microscopic length scales invisible to the global limiting potential.

\subsubsection{Expected picture for $m>1$: Ooguri-Vafa type metrics}

For $m>1$, the $x= \frac{d_1}{d_0+d_1}$ transition point is supposedly replaced by the walls between the various chambers. We expect that along the walls separating two chambers $\Delta_i, \Delta_j$, we may see an Ooguri-Vafa type metric similar to the $m=1$ case, which contains a fibration by some (twisted) product of very collapsed Taub-NUT metrics with collapsed cylinders $(\C^*)^{m-1}$, over the base $\{ F_0=\ldots =F_m=F=0\}$. A new feature of $m>1$ is the triple junction between the walls; as a guess, we may see a fibration by some (twisted) product of the Taub-NUT type metric on $\C^3$ \cite{LiTaub} with collapsed cylinders $(\C^*)^{m-2}$, over the base  $\{ F_0=\ldots =F_m=F=0\}$. The deeper intersections between the walls may exhibit an inductive pattern.

\subsubsection{Generalised Tian-Yau metrics}

In the recent joint work with Collins \cite{CollinsLi} we constructed a generalisation of the Tian-Yau metric, on the complement of an anticanonical divisor $D$ inside a compact Fano manifold, where $D$ consists of two transverse smooth divisor components. In the $m=2$ case of our main setting, this construction would apply to $E_0\setminus (E_1\cup E_2)$, and cyclic permutations thereof. The asymptotic geometry is semi-explicit, involving hypergeometric functions.

The $m=2$ analogue of the two endpoints of the interval $[0,1]$, is the triangle $\partial \Delta$. It is conceivable that along the generic part of this triangle, we see a parametrised version of the usual Tian-Yau metrics on $E_0\cap E_1\setminus E_2$ (and cyclic permutations etc), but at special points on $\partial \Delta$, the generalised Tian-Yau construction may appear as the local model.

\subsection{Numerical computation for CY metrics}

There is some interest among physicists to compute CY metrics numerically in some polarised degeneration limit $t\to 0$. A traditional strategy, which keeps the K\"ahler property manifest, is to represent the CY potential via Fubini-Study approximation by a large Hermitian matrix on $H^0(X_t, kL )$ for $k\gg 1$, and then try to solve the Monge-Amp\`ere equation numerically \cite{Douglas}. However, for fixed $k$, the Fubini-Study volume form is very different from the CY volume form as $t\to 0$: in the hybrid topology, the limit of the Fubini-Study volume form is an atomic measure, while the limit of the normalised CY volume form is a Lebesgue measure. Thus to achieve reasonable precision for small $t$, one expects to take $k$ large, which would be computationally expensive.

Our result suggests a very different perspective: as long as one is only interested in the K\"ahler potential of the CY metrics up to small $C^0$-error, then it suffices to solve an optimal transport problem, which is computationally efficient because the Kontorovich formulation transforms it into a linear optimization problem with convex cost.

It would be interesting if further numerical methods can be developed to capture the microscopic metric information, such as the Tian-Yau core region or the Ooguri-Vafa type neck region.


\begin{thebibliography}{7}
		
		
		
		
		
		
		
		
		
		
		
		\bibitem{Hultgren2}
		
		Rolf A., Jakob H.
		Solvability of Monge-Amp\`ere equations and tropical affine structures on reflexive polytopes.  	arXiv:2303.05276.
		
		
		\bibitem{Boucksomsemipositive} 
		Boucksom, S.; Favre, C.; Jonsson, M. Singular semipositive metrics in non-Archimedean geometry. J. Algebraic Geom. 25 (2016), no. 1, 77--139. 
		
		
		\bibitem{Boucksom1} 
		Boucksom, S.; Jonsson, M. Tropical and non-Archimedean limits of degenerating families of volume forms. J. Éc. polytech. Math. 4 (2017), 87--139. 
		
		
		
		
		
		
		
		\bibitem{Boucksom} 
		Boucksom, S.; Favre, C.; Jonsson, M. Solution to a non-Archimedean Monge-Ampère equation. J. Amer. Math. Soc. 28 (2015), no. 3, 617--667.
		
		
		\bibitem{Boucksomsurvey}
		Boucksom, S.; Favre, C.; Jonsson, M. The non-Archimedean Monge-Ampère equation. Nonarchimedean and tropical geometry, 31--49, Simons Symp., Springer, [Cham], 2016. 
		
		
		
		
		\bibitem{Boucksomnew1}  Boucksom, S.; Eriksson, D. Spaces of norms, determinant of cohomology and Fekete points in non-Archimedean geometry. 	
		Adv. Math. 378 (2021), Paper No. 107501, 124 pp. 
		
		
		\bibitem{CaffarelliC2alpha}
		
		Caffarelli, L A. Interior $W^{2,p}$
		estimates for solutions of the Monge-Ampère equation.  Ann. of Math. (2) 131 (1990), no. 1, 135–150. 
		
		
		
		\bibitem{Caffarelli3}
		Caffarelli, L A. The regularity of mappings with a convex potential. J. Amer. Math. Soc. 5 (1992), no. 1, 99--104.
		
		
		
		
		
		
		
		
		
		\bibitem{CLDucros} 
		Chambert-Loir,  A.; Ducros, A. 
		Formes différentielles réelles et courants sur les espaces de Berkovich. 	arXiv:1204.6277.
		
		
		\bibitem{CollinsLi}
		T. C. Collins, Y. Li. Complete Calabi-Yau metrics in the complement of two divisors. arXiv:2203.10656.
		
		
		
		
		\bibitem{DePhilippisFigali}
		
		
		De Philippis, G.; Figalli, A. The Monge-Ampère equation and its link to optimal transportation. Bull. Amer. Math. Soc. (N.S.) 51 (2014), no. 4, 527–580. 
		
		
		
		\bibitem{DonaldsonSun}
		Donaldson, S.; Sun, Song. Gromov-Hausdorff limits of Kähler manifolds and algebraic geometry. Acta Math. 213 (2014), no. 1, 63--106.
		
		
		
		
		\bibitem{Douglas}
		Douglas, M R.; Karp, R L.; Lukic, S.; Reinbacher, R. Numerical Calabi-Yau metrics. J. Math. Phys. 49 (2008), no. 3, 032302, 19 pp.
		
		
		
		\bibitem{EGZ} 
		Eyssidieux, P.; Guedj, V.; Zeriahi, A. Singular Kähler-Einstein metrics. J. Amer. Math. Soc. 22 (2009), no. 3, 607--639.
		
		
		
		
		\bibitem{Favre}
		Favre, C. 
		Degeneration of endomorphisms of the complex projective space in the hybrid space. (English summary)
		J. Inst. Math. Jussieu 19 (2020), no. 4, 1141–1183. 
		
		
		
		
		
		
		
		
		
		
		\bibitem{Gross} 
		Gross, M. Mirror symmetry and the Strominger-Yau-Zaslow conjecture. Current developments in mathematics 2012, 133--191, Int. Press, Somerville, MA, 2013. 
		
		
		
		
		
		
		
		\bibitem{GrossWilson}
		Gross, M.; Wilson, P. M. H. Large complex structure limits of $K3$ surfaces. J. Differential Geom. 55 (2000), no. 3, 475--546.
		
		
		
		
		
		
		
		
		
		
		
		
		\bibitem{Gutierez}
		Gutiérrez, C E. The Monge-Amp\`ere equation. Second edition [of MR1829162]. Progress in Nonlinear Differential Equations and their Applications, 89. Birkhäuser/Springer, [Cham], 2016. 
		

  \bibitem{HSVZ}
Hein, H-J; Sun, S.; Viaclovsky, J.; Zhang, R.
Nilpotent structures and collapsing Ricci-flat metrics on the K3 surface.
J. Amer. Math. Soc. 35 (2022), no. 1, 123–209.
  
		
		\bibitem{Hultgren}
		
		J. Hultgren, M. Jonsson, E. Mazzon, N. McCleerey. 
		Tropical and non-Archimedean Monge-Amp\`ere equations for a class of Calabi-Yau hypersurfaces.  	arXiv:2208.13697.
		
		
		
		
		
		
		
		
		
		\bibitem{Kolodziej1}
		Kołodziej, S. The complex Monge-Ampère equation. Acta Math. 180 (1998), no. 1, 69--117.
		
		\bibitem{Kolodziej2}
		Kołodziej, S.
		Stability of solutions to the complex Monge-Ampère equation on compact Kahler manifolds, Indiana Univ. Math. J. 52 (2003), 667-686.
		
		
		
		
		
		
		
		
		\bibitem{LiTaub} Li, Y.
		SYZ geometry for Calabi-Yau 3-folds: Taub-NUT and Ooguri-Vafa type metrics. 
		accepted by AMS Memoir.
		
		
		
		
		
		\bibitem{LiFermat} 
		
		
		Li, Y. Strominger-Yau-Zaslow conjecture for Calabi-Yau hypersurfaces in the Fermat family. Acta Math. 229 (2022), no. 1, 1--53. 
		
		
		
		
		
		
		
		\bibitem{LiNA} 
		Li, Y. 
		Metric SYZ conjecture and non-archimedean geometry. 	arXiv:2007.01384, accepted by Duke.
		
		
		\bibitem{LiuniformSkoda} Li, Y. Uniform Skoda integrability and Calabi-Yau degeneration. 	accepted by Analysis and PDE.
		
		
		
		\bibitem{LiTosatti} Li, Y; Tosatti, V. Diameter bounds for degenerating Calabi-Yau metrics. 	arXiv:2006.13068.
		
		
		
		
		\bibitem{LiFano} 
		
		Li, Y. Metric SYZ conjecture for certain toric Fano hypersurfaces.  	arXiv:2301.12983.
		
		
		\bibitem{Mooney} 
		Mooney, C. Partial regularity for singular solutions to the Monge-Ampère equation. Comm. Pure Appl. Math. 68 (2015), no. 6, 1066--1084.
		
		
		
		
		\bibitem{NicaiseXu}
		Nicaise, J.; Xu, C. The essential skeleton of a degeneration of algebraic varieties. Amer. J. Math. 138 (2016), no. 6, 1645--1667.
		
		
		
		\bibitem{NicaiseXuYu}
		Nicaise, J.; Xu, C.; Yu, Tony Yue. The non-archimedean SYZ fibration. Compos. Math. 155 (2019), no. 5, 953--972.
		
		
		
		
		\bibitem{PilleSchneider}
		Pille-Schneider, L. Hybrid toric varieties and the non-archimedean SYZ fibration on Calabi-Yau hypersurfaces.  	arXiv:2210.05578. 
		
		
		\bibitem{PilleSchneider2}
		
		
		
		
		
		Pille-Schneider, L. Hybrid toric varieties and the non-archimedean SYZ fibration on Calabi-Yau hypersurfaces.  	arXiv:2210.05578.
		
		
		\bibitem{RongZhang}
  Rong, X.; Zhang, Y.
Continuity of extremal transitions and flops for Calabi-Yau manifolds. 
Appendix B by Mark Gross.
J. Differential Geom. 89 (2011), no. 2, 233–269.
		
		
		\bibitem{Savin} 
		Savin, O. Small perturbation solutions for elliptic equations. Comm. Partial Differential Equations 32 (2007), no. 4-6, 557--578.
		
		
		
		\bibitem{SYZ} 
		Strominger, A.; Yau, S-T.; Zaslow, E. Mirror symmetry is $T$-duality.
		Nucl.Phys.B479:243-259,1996.
		
		
		
		
		\bibitem{SunZhang} 
		Sun, S.; Zhang, R. Complex structure degenerations and collapsing of Calabi-Yau metrics. 	arXiv:1906.03368.
		
		
		
		
		
		\bibitem{TianYau} 
		
		
		Tian, G.; Yau, S-T. Complete Kähler manifolds with zero Ricci curvature. II. Invent. Math. 106 (1991), no. 1, 27--60.
		
		
		\bibitem{Vilsmeier} Vilsmeier, C. A comparison of the real and non-archimedean Monge–Amp\`ere operator. Mathematische Zeitschrift (2020).
		
		
		\bibitem{Yau}
		Yau, S-T. On the Ricci curvature of a compact Kähler manifold and the complex Monge-Ampère equation. I. Comm. Pure Appl. Math. 31 (1978), no. 3, 339--411.
		
		
		
		
		
		
		
	\end{thebibliography}
\end{document}